\documentclass[10pt]{article}
\usepackage{amsmath,amssymb}
\usepackage{xcolor}

\usepackage[latin1]{inputenc}
\usepackage[francais]{babel}
\usepackage{color}
\usepackage{amsfonts,amssymb,amsthm,amsmath}
\usepackage{graphics,graphicx}
\usepackage{aeguill}
\usepackage{url}
\usepackage{dsfont}
\usepackage{ulem}
\usepackage{stmaryrd}

\definecolor{blue}{rgb}{0,0,1}
\newcommand{\blue}{\color{blue}}

\newcommand{\red}{\color{red}}

\definecolor{rose}{rgb}{1,0,0.5}

\definecolor{brique}{rgb}{0.14,0.25,0.14}

\theoremstyle{plain}
\newtheorem{theoreme}{Theorem}[section]
\newtheorem{proposition}[theoreme]{Proposition}
\newtheorem*{proposition*}{Proposition}

\newtheorem{corollaire}[theoreme]{Corollary}
\newtheorem*{corollaire*}{Corollaire}
\newtheorem{lemme}[theoreme]{Lemma}
\newtheorem*{lemme*}{Lemma}

\theoremstyle{definition}
\newtheorem*{definition*}{Definition}

\theoremstyle{remark}
\newtheorem*{remarque*}{Remarque}
\newtheorem{remarque}[theoreme]{Remarks}
\newtheorem{exemple}[theoreme]{Exemple}
\newtheorem*{exemple*}{Exemple}

\newtheorem*{exemples*}{Exemples}

\newcommand{\commm}[1]{{}}

\begin{document}

\begin{center}
{\Large \bf On Asymptotic and Finite-Time Stabilization of Bilinear Systems}
\end{center}

\begin{center}

M. Ouzahra \\

LMASI, ENS, University of  Fez
\end{center}

\vspace{1cm}
\medskip
\begin{center}
  Outline
\end{center}

\vspace{0.5cm}

 $I-$ Weak and Strong Stabilization of Bilinear Systems in Hilbert Spaces.

   \vspace{0.5cm}

$II-$ Exponential Stabilization of Bilinear Systems in Banach Spaces.

\vspace{0.5cm}

 $III-$  Stabilization in Finite-Time of Bilinear Systems.

\medskip

\footnote{
\begin{center}
This course was, for the most part, delivered at the Spring School on Control of Dynamical Systems\\
Corsica, March 16--20, 2026.
\end{center}
}
\newpage

 \section{Introduction}
 Bilinear systems arise naturally in the modeling of many physical and engineering
processes, in which the control acts multiplicatively on the state.\\
This course is devoted to the analysis and design of feedback laws for the
stabilization of bilinear control systems evolving in infinite-dimensional
spaces. The first part of the course focuses on weak and strong stabilization
of bilinear systems in a Hilbert space setting. In this first part, the Hilbertian framework is
adopted for pedagogical reasons, as it allows the main ideas underlying
stabilization strategies to be presented in a clear and intuitive manner.
 The existence of weakly stabilizing feedback laws is investigated under an
approximate observability assumption. The course subsequently addresses strong
and polynomial stabilization. Sufficient conditions ensuring strong stability
are presented, based on appropriate observability estimates, together with
feedback strategies leading to polynomial decay rates of the system
trajectories.

A central part of the course is devoted to the exponential stabilization of
bilinear systems, which will be considered  in the context of Banach spaces. Particular attention is paid to the
additional difficulties arising in this framework due to the lack of a
Hilbertian structure, which implies that the differentiability of the state
norm is no longer guaranteed; an essential property for the derivation of
energy estimates.

Finally, the course introduces the concept of finite-time stabilization of bilinear systems. This part highlights recent results and open problems related to finite-time convergence and situates this type of stabilization within the broader framework of nonlinear and infinite-dimensional control theory.

\newpage

\begin{center}
\huge{Part I}

\end{center}

\begin{center}

{\bf  Weak and Strong Stabilization of Bilinear Systems in Hilbert Spaces}

\end{center}

\vspace{0.5cm}

{\it This first part of the course focuses on weak and strong stabilization
of bilinear systems in a Hilbert space setting. This Hilbertian framework is adopted for pedagogical reasons, as it allows the main ideas underlying stabilization strategies to be presented in a clear and intuitive manner.

\section{Necessary  Conditions for  Stability}

Consider the initial value problem
\begin{equation}\label{Ch1Sf}
    \frac{d}{dt} z(t) =Az(t)+F(t,z(t)),\ z(0)=z_{0},
\end{equation}
where\\
$\star$ $H$ is a Hilbert space,\\
$\star$ $A$  generates  a $C_0-$semigroup $S(t) $ on $H$,\\
$\star$  $ F : \mathbb{R}^+\times H\rightarrow H$ is a given function such that $F(t,0)=0, \; \forall
t\ge 0$ (so that $0$ is an equilibrium point for (\ref{Ch1Sf})).\\

$\bullet$ Recall that (\ref{Ch1Sf}) is weakly (resp. strongly) stable at the origin if there is a unique solution $z$ such that\\
$\star$ $0$ is lyapunov stable (i.e.,  every trajectory starting inside the $\alpha-$ball remains inside
the $\varepsilon-$ball for all future time),\\
$\star$  the solution tends to $0$, as $t\to +\infty$, weakly (resp. strongly).

\vspace{0.25cm}

The purpose of this part is to establish the necessary conditions for weak and strong stabilization.\\
  Define the following sets:

$$  \mathcal{F}=\{y\in H/ F(t,S(t)y) = 0, \forall t\ge 0 \},$$
$$ I_w=\{y\in H/ S(t)y\rightharpoonup 0, \;\mbox{as}\; t\to+\infty\}$$
$$ I_s=\{y\in H/ S(t)y\rightarrow 0, \;\mbox{as}\;
t\to+\infty\}.$$
Notice that
\begin{itemize}

\item[$\star$] The semigroup $S(t)$ is  weakly (resp. strongly) stable iff $I_w=H$ (resp. $I_s=H$).

  \item[$\star$] If the initial state belongs to $\mathcal{F}$, then (in case of uniqueness) the solution of~\eqref{Ch1Sf} coincides with that of its linear part.

  \item[$\star$] For the class of controlled systems considered next, the set $\mathcal{F}$ will be the set of unobservable initial states.
\end{itemize}
The following result provides necessary conditions for the stability of  system (\ref{Ch1Sf}).

\begin{proposition}\label{NC-FTS}

A necessary condition for system~(\ref{Ch1Sf}) to be weakly (resp. strongly) stable is that $\mathcal{F} \subset I_w$ (resp. $\mathcal{F} \subset I_s$).\\
\end{proposition}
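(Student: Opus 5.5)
The plan is to argue directly, showing that for initial data in $\mathcal{F}$ the nonlinear solution coincides with the linear semigroup orbit; stability of (\ref{Ch1Sf}) then forces the linear orbit to converge. We assume (\ref{Ch1Sf}) is weakly (resp. strongly) stable, so in particular it has a unique (mild) solution for every initial state, and we fix $z_0 \in \mathcal{F}$. The aim is to prove that $S(t)z_0 \rightharpoonup 0$ (resp. $S(t)z_0 \to 0$) as $t\to+\infty$, which means exactly $z_0\in I_w$ (resp. $z_0\in I_s$).

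\textbf{First step.} We check that $y(t):=S(t)z_0$ is a mild solution of (\ref{Ch1Sf}). The mild formulation reads
\[
z(t)=S(t)z_0+\int_0^t S(t-s)F(s,z(s))\,ds .
\]
Substituting $z(s)=y(s)=S(s)z_0$, the integrand is $S(t-s)F(s,S(s)z_0)$. This vanishes identically for every $s\ge 0$ by the definition of $\mathcal{F}$. Hence $y(t)=S(t)z_0+0$, and $y$ satisfies the mild equation. Continuity of $y$ on $\mathbb{R}^+$ is immediate from strong continuity of the $C_0$-semigroup. If classical solutions are intended and $z_0\in D(A)$, then $y$ is even a classical solution, since $\dot y = Ay = Ay+F(t,y)$.

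\textbf{Second step.} We invoke the uniqueness that is part of the definition of stability. The solution $z$ of (\ref{Ch1Sf}) issued from $z_0$ therefore coincides with $y$, that is, $z(t)=S(t)z_0$ for all $t\ge0$. This is precisely the second bullet remark preceding the statement.

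\textbf{Third step.} Weak (resp. strong) stability of (\ref{Ch1Sf}) gives $z(t)\rightharpoonup 0$ (resp. $z(t)\to0$). Since $z(t)=S(t)z_0$, we get $z_0\in I_w$ (resp. $z_0 \in I_s$). As $z_0\in\mathcal{F}$ was arbitrary, $\mathcal{F}\subset I_w$ (resp. $\mathcal{F}\subset I_s$).

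Lyapunov stability is not even needed; only the attractivity part of the definition is used. The only delicate point is the notion of solution. One must make sure that $S(\cdot)z_0$ counts as a solution in the sense for which uniqueness is assumed, and the mild formulation above handles this for every $z_0\in H$. I expect no further obstacle, since the whole argument rests on the fact that $F$ vanishes along the free trajectory.
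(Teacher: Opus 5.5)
Your proposal is correct and follows the paper's proof exactly: for $z_0\in\mathcal{F}$ the free orbit $S(\cdot)z_0$ solves (\ref{Ch1Sf}), uniqueness identifies it with the solution of (\ref{Ch1Sf}), and attractivity then gives $z_0\in I_w$ (resp. $z_0\in I_s$). Your explicit check of the mild formulation and your explicit appeal to uniqueness simply spell out what the paper leaves implicit.
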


{\red Proof.}\\
Assume that system~(\ref{Ch1Sf}) is weakly stable, and let $y \in H$ be such that
$F(t, S(t)y) = 0$ for all $t \geq 0$. Then $z(t) = S(t)y$ solves the system~(\ref{Ch1Sf}) with initial state $z(0) = y$. Hence,
$$
S(t)y = z(t) \rightharpoonup 0 \quad \text{as } t \to +\infty.
$$
The case of strong stability follows from similar arguments.

\section{Weak Stabilization}

Let us consider the following bilinear system:
$$
(\text{BLS}) \quad \dot{z}(t)=Az(t)+{\red v(t)Bz(t)},\qquad z(0)=z_0\in H,
$$
with
\\
$\star$ $H$: Hilbert space endowed with the inner product $\langle \cdot, \cdot \rangle$ and the corresponding norm $\|\cdot\|.$\\
$\star$  $A$: generator of a $C_0-$semigroup $S(t)$.\\
$\star$  $B$: bounded linear operator.\\
$\star$  $v(t)$: scalar multiplicative control.

\vspace{0.25cm}

{\it $\bullet$ Numerous real-world problems can be modeled by system (\text{BLS}); these include applications in engineering, nuclear, thermal, chemical, and social processes, among others. Moreover, in the approximation of nonlinear systems, the bilinear model may provides a better approximation than the linear one.

}

\vspace{0.25cm}

$\bullet$  {\blue See} [Mohler, 1973],    [Mohler and Kolodziej, 1980], [Kalouptsidis and  Tsinias, 1984], [Hager, and Pardalos, 1998], [Khapalov, 2010], [Knopov et al., 2000], [Pardalos and Yatsenko, 2009], [Resende Pereira et al., 2021],{\bf $\cdot \cdot$}

\subsection{Necessary condition for weak stabilization}

 The conventional feedback control for the weak stabilization of bilinear systems is typically a quadratic function of the state:
$$v= -\langle z(t),Bz(t)\rangle.$$
 This type of control was considered  by Ball and Slemrod (1979) to establish weak stabilization of bilinear/affine-control systems in the context of a Hilbert space.\\
  We will start with the following  control law:
$$
v:=v_0 = -\langle z(t),Bz(t)\rangle,
$$
which makes the energy $
E(t)=\tfrac12\|z(t)\|^2
$
decreasing.\\
In the sequel, we apply Proposition~\ref{NC-FTS} to system~(\text{BLS}) closed with $v_0$. To this end, let us introduce the following set:
$$
\mathcal{B} = \{\, y \in H \mid B S(t)y = 0,\; \forall t \geq 0 \,\},
$$
and, for a feedback $v(t) = V(z(t))$ with $V : H \to H$, we set
$$
\mathcal{V} = \{\, y \in H \mid V(S(t)y) = 0,\; \forall t \geq 0 \,\}.
$$

The following result provides  necessary conditions for weak stabilization.

\begin{corollaire}

    (i) If the system (\text{BLS}) is weakly  stabilizable, then we have $\mathcal{B} \subset I_w$.

    (ii) A necessary condition for a feedback $v(t)=V(z(t))$ to be a weakly  stabilizing control for the system (\text{BLS}) is $\mathcal{V} \subset I_w$.

\end{corollaire}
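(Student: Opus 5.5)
Both assertions follow from Proposition~\ref{NC-FTS}, applied to (BLS) closed with a given feedback, by identifying the set $\mathcal{F}$ of that proposition for the resulting closed-loop system. I will prove (ii) first and then obtain (i) from it.

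For (ii), I take a feedback $v(t)=V(z(t))$ that weakly stabilizes (BLS). The closed-loop system is of the form (\ref{Ch1Sf}) with $F(t,z)=V(z)Bz$, and $F(t,0)=V(0)B0=0$, so $0$ is an equilibrium. Proposition~\ref{NC-FTS} then gives $\mathcal{F}\subset I_w$, where
$$\mathcal{F}=\{y\in H \mid V(S(t)y)\,BS(t)y=0,\ \forall t\ge 0\}.$$
If $y\in\mathcal{V}$, then $V(S(t)y)=0$ for every $t\ge0$, hence $F(t,S(t)y)=0$ for every $t$, i.e.\ $y\in\mathcal{F}$. This gives $\mathcal{V}\subset\mathcal{F}\subset I_w$. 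One can also argue directly, as in the proof of Proposition~\ref{NC-FTS}: for $y\in\mathcal{V}$ the function $z(t)=S(t)y$ is a mild solution of the closed loop, so by uniqueness it is the trajectory, and weak stability forces $S(t)y\rightharpoonup 0$.

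For (i), I read ``weakly stabilizable'' as the existence of some feedback $V$ for which the closed-loop system is weakly stable, with uniqueness of solutions. The key observation is that $\mathcal{B}\subset\mathcal{F}$ for every such $V$: if $BS(t)y=0$ for all $t\ge 0$, then $F(t,S(t)y)=V(S(t)y)\,BS(t)y=0$. Proposition~\ref{NC-FTS} then yields $\mathcal{B}\subset I_w$. Equivalently, $S(t)y$ solves the closed-loop equation, because the bilinear term vanishes along this trajectory whatever the control value.

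The only delicate point is the well-posedness convention. Proposition~\ref{NC-FTS} relies on uniqueness of the closed-loop solution, so that $S(t)y$ really is the trajectory issued from $y$. I will take this uniqueness as part of the definition of stability, as the paper does; it holds, for instance, when $V$ is locally Lipschitz and $B$ is bounded. Under that convention, no step in the argument presents a real obstacle.
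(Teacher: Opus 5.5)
Your proposal is correct and follows essentially the paper's proof: take $F(t,y)=V(y)By$ in Proposition~\ref{NC-FTS} and note that both $\mathcal{B}$ and $\mathcal{V}$ are contained in the resulting set $\mathcal{F}$. Your remark that the argument relies on uniqueness of the closed-loop solution (for instance when $V$ is locally Lipschitz) makes explicit an assumption the paper leaves implicit.
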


  {\red Proof.}

Let $v=V(z)$ be a stabilizing feedback, and let us  take $F(t, y) =  V(y) By$ for all $t \ge 0$, $y \in H$.

Then, in both cases (i)-(ii), the result follows from Proposition~\ref{NC-FTS} and the fact that $\mathcal{B} \subset \mathcal{F}$ and $\mathcal{V} \subset \mathcal{F}$.\\

As an application to  the control $v_0$, we have the following
corollary:\\

\begin{corollaire}
   A necessary condition for the control $v_0$  to be a
 weakly  stabilizing one for
the system (\text{BLS}) is $ G \subset I_w,$  where
$$G=\{y\in H/ \langle BS(t)y,S(t)y\rangle = 0, \forall
t\ge 0 \}.$$
In other words,  for all $y\in H$, we have the implication:

$$
 \langle BS(t)y,S(t)y\rangle = 0, \forall
t\ge 0 \Rightarrow S(t)y\rightharpoonup 0, \;\mbox{as}\; t\to+\infty.
$$
\end{corollaire}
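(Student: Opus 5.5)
This is a direct application of part (ii) of the preceding corollary, with the feedback map
$$
V(y)=-\langle y,By\rangle,\qquad y\in H,
$$
which produces exactly $v_0(t)=V(z(t))=-\langle z(t),Bz(t)\rangle$. (Strictly speaking $V$ is scalar valued, $V:H\to\mathbb{R}$; the notation $V:H\to H$ in the definition of $\mathcal{V}$ should be read that way.)

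The plan has three steps.

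\emph{First,} identify $\mathcal{V}$ for this choice of $V$. For $y\in H$ and $t\ge 0$,
$$
V(S(t)y)=-\langle S(t)y,BS(t)y\rangle = -\langle BS(t)y,S(t)y\rangle .
$$
In a real Hilbert space the inner product is symmetric, so the order of the arguments does not matter. In the complex case the two orderings are conjugates of one another, so one vanishes exactly when the other does. Hence $V(S(t)y)=0$ for all $t\ge0$ if and only if $\langle BS(t)y,S(t)y\rangle=0$ for all $t\ge0$, which gives $\mathcal{V}=G$.

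\emph{Second,} assume $v_0$ is a weakly stabilizing feedback for (\text{BLS}). Part (ii) of the previous corollary then gives $\mathcal{V}\subset I_w$, that is, $G\subset I_w$. For completeness I would also recall the mechanism behind (ii), namely Proposition~\ref{NC-FTS} with $F(t,y)=V(y)By$. If $y\in G$, then $F(t,S(t)y)=V(S(t)y)\,BS(t)y=0$ for every $t$. So $z(t)=S(t)y$ is a mild solution of the closed-loop system, and by uniqueness it is \emph{the} solution. Weak stability then forces $S(t)y\rightharpoonup0$.

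\emph{Third,} restate the inclusion $G\subset I_w$ as the implication in the statement. This is just the definitions of $G$ and $I_w$ unpacked.

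The only point requiring care is the uniqueness of solutions used in Proposition~\ref{NC-FTS}. Here it holds because $B$ is bounded, so $y\mapsto\langle y,By\rangle By$ is locally Lipschitz on $H$. The closed-loop system therefore has a unique local mild solution by the standard fixed-point argument. That solution is global because the energy $\tfrac12\|z(t)\|^2$ is nonincreasing along trajectories. Moreover, weak stabilizability as defined already presupposes uniqueness, so no real obstacle remains.
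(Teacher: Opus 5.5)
Your proposal is correct and follows the paper's route: the paper states this corollary as a direct application of part (ii) of the preceding corollary with $V(y)=-\langle y,By\rangle$, for which $\mathcal{V}=G$. One small caveat is that your global-existence remark via a nonincreasing energy needs $S(t)$ to be contractive, which is not assumed here; it is also unnecessary, since the argument uses only uniqueness and the fact that $S(t)y$ is itself a global solution when $y\in G$.
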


\subsection{Sufficient conditions for weak stabilization}

Based on the properties of the orbits and the $\omega-$limit sets,  a weak stabilization result was  proved in  [Ball and Slemrod, 1979], using the conventional control  $v_0$ under the assumption
\begin{equation}\label{Ch1obsw}
\langle BS(t)y,S(t)y\rangle = 0, \; \forall t\ge 0 \Longrightarrow y
= 0\cdot
\end{equation}

The proof relies on properties of the orbits and the $\omega-$limit sets. Here, we give a direct proof.

The  result concerning weak stability of (\text{BLS}) is as
follows.

\begin{theoreme}
   (Ball and Slemrod, 1979).

Suppose that

 (i) $A$ generates a semigroup of contractions $S(t)$,

(ii) $B\in  \mathcal{L}(H)$ is a compact operator such that (\ref{Ch1obsw}) holds.

Then the control $v_0$ weakly stabilizes (\text{BLS}).

\end{theoreme}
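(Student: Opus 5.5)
We plan a direct energy argument combined with a weak compactness argument, avoiding $\omega$-limit sets.

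\textbf{Well-posedness and energy.} The closed-loop nonlinearity $z\mapsto -\langle z,Bz\rangle Bz$ is locally Lipschitz on $H$, so (BLS) closed with $v_0$ has a unique local mild solution. For $z_0\in D(A)$ the solution is classical and, since $A$ is dissipative ($S(t)$ contractions),
$$\frac{d}{dt}\|z(t)\|^2=2\mathrm{Re}\langle Az,z\rangle-2\langle z,Bz\rangle^2\le -2\langle z(t),Bz(t)\rangle^2 .$$
Integrating and using density together with continuous dependence, we get for every $z_0\in H$ that $\|z(t)\|\le\|z_0\|$, hence the solution is global and Lyapunov stable, and
$$\int_0^{+\infty}\langle z(t),Bz(t)\rangle^2\,dt\le \tfrac12\|z_0\|^2 .$$

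\textbf{Comparison with the linear flow.} Fix $T>0$. By the variation of constants formula, for $s\in[0,T]$,
$$z(t+s)-S(s)z(t)=-\int_0^s S(s-r)\langle z(t+r),Bz(t+r)\rangle Bz(t+r)\,dr ,$$
so, using Cauchy--Schwarz and $\|z\|\le\|z_0\|$,
$$\|z(t+s)-S(s)z(t)\|\le \|B\|\|z_0\|\sqrt{T}\Big(\int_t^{t+T}\langle z(r),Bz(r)\rangle^2dr\Big)^{1/2}\to 0$$
as $t\to\infty$, uniformly in $s\in[0,T]$.

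\textbf{Identification of weak limits.} Suppose $z(t)\not\rightharpoonup 0$. By boundedness there exist $t_n\to\infty$ and $y\neq 0$ with $z(t_n)\rightharpoonup y$. For fixed $s$, $S(s)z(t_n)\rightharpoonup S(s)y$, and by the previous step $z(t_n+s)\rightharpoonup S(s)y$. Compactness of $B$ converts this into strong convergence $Bz(t_n+s)\to BS(s)y$. Together with weak convergence of $z(t_n+s)$, this gives
$$\langle z(t_n+s),Bz(t_n+s)\rangle\to\langle S(s)y,BS(s)y\rangle .$$
On the other hand, $\int_0^T\langle z(t_n+s),Bz(t_n+s)\rangle^2ds\to0$, and the integrands are bounded by $\|B\|^2\|z_0\|^4$. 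By dominated convergence, $\int_0^T\langle BS(s)y,S(s)y\rangle^2ds=0$. Since $s\mapsto\langle BS(s)y,S(s)y\rangle$ is continuous and $T$ is arbitrary, it vanishes for all $s\ge0$, so (\ref{Ch1obsw}) forces $y=0$, a contradiction. Hence $z(t)\rightharpoonup0$.

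\textbf{Main obstacle.} The delicate step is the passage to the limit in the quadratic term $\langle z,Bz\rangle$ along weakly convergent sequences. A product of weakly convergent sequences does not converge in general, and this is exactly where compactness of $B$ is indispensable. A secondary technical point is justifying the energy inequality for mild solutions with $z_0\notin D(A)$; we handle it by density of $D(A)$ and continuous dependence on initial data on bounded time intervals.
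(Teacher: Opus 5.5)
Your proof is correct and follows essentially the same route as the paper. Both arguments use the same ingredients: energy dissipation from contractivity, the variation-of-constants comparison $z(t+s)\approx S(s)z(t)$ on $[t,t+T]$, weak sequential compactness of the bounded orbit, compactness of $B$ combined with dominated convergence, and the observability condition (\ref{Ch1obsw}). The only difference is cosmetic. The paper first packages the comparison into the estimate $\big(\int_0^T|\langle BS(s)z(t),S(s)z(t)\rangle|\,ds\big)^2=O\big(\int_t^{t+T}|\langle z,Bz\rangle|^2\,ds\big)$ and passes to the limit in the linear-flow quantity, whereas you pass to the limit directly in $\langle z(t_n+s),Bz(t_n+s)\rangle$.
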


{\red Proof.}

Since the closed-loop operator
$$
F_0: y \mapsto \langle y, By \rangle By
$$
is locally Lipschitz, there exists a unique local mild solution, which is a classical one for $y_0\in D(A)$. In this case we have, by dissipativity of $A$:
$$
\frac{d}{dt} \|y(t)\|^2\le -2 |\langle y, By \rangle|^2,
$$
from which, we can deduce by density that for all $y_0\in H,$
$$
\int_0^t |\langle y(s), By(s) \rangle|^2 ds \le \frac{\|z_0\|}{2}
\, \, \mbox{and}\, \, \|z(t)\| \le \|z_0\|.
$$
In particular,  the solution is global.

 {\bf Step 1}: Let us prove the following  estimate:

$$
\big(\displaystyle\int_{0}^{T}|\langle
BS(s)z(t),S(s)z(t)\rangle|ds\big)^{2}
=O\big( \displaystyle\int_{t}^{t+T}
\displaystyle  |\langle
z(s),Bz(s)\rangle|^{2}   ds\big),
\; t\to+\infty\cdot
$$

The mild solution is given by  the variation of constants formula (VCF)

$$    z(t)=S(t)z_{0} -\displaystyle\int_{0}^{t}\displaystyle \langle z(s),Bz(s)\rangle S(t-s)Bz(s)ds\cdot
$$

Then using  that $\|z(t)\|\leq
\|z_{0}\|, \; \forall t\ge0, $ we get, for any given $T>0$

$$
\|z(t)-S(t)z_{0}\|\leq
\|B\|\|z_{0}\| \bigg(T\displaystyle\int_{0}^{T}
\displaystyle |\langle
z(s),Bz(s)\rangle|^{2} ds\bigg)^{\frac{1}{2}}, \
\forall t\in [0,T].
$$
Using  the fact that $S(t)$ is a semigroup of contractions and   that $\|z(t)\|\leq
\|z_{0}\|, \; \forall t\ge0,$ we
obtain the inequality
$$
|\langle BS(s)z_{0},S(s)z_{0}\rangle|\leq
2\|B\|\|z(s)-S(s)z_{0}\|\|z_{0}\|+|\langle Bz(s),z(s)\rangle|.
$$
Then, it follows from the two above inequalities that
$$
|\langle BS(s)z_{0},S(s)z_{0}\rangle|\leq
2\|B\|^2\|z_{0}\|^2 \bigg(T\displaystyle\int_{0}^{T}
\displaystyle |\langle
z(s),Bz(s)\rangle|^{2} ds\bigg)^{\frac{1}{2}}
+|\langle Bz(s),z(s)\rangle|.
$$
Replacing $z_{0}$ by $z(t)$ and using the superposition property of the solution, we get
$$|\langle BS(s)z(t),S(s)z(t)\rangle|
\leq
2\|B\|^{2}\|z(t)\|^2\big(T\displaystyle\int_{0}^{T}
\displaystyle |\langle
z(t+\tau),Bz(t+\tau)\rangle|^{2} d\tau
\big)^{\frac{1}{2}}$$
$$+|\langle Bz(s+t),z(s+t)\rangle|, \ \forall t,s\geq0.$$
Integrating this last inequality over the interval $[0,T]$ and using
the semigroup property of the solution $z(t)$, it comes
$$\displaystyle\int_{0}^{T}|\langle BS(s)z(t),S(s)z(t)\rangle|ds\leq$$
$$
(2T^{\frac{3}{2}}\|B\|^{2}\|z(t)\|^2 +T^{\frac{1}{2}} ) \big (\displaystyle\int_{t}^{t+T}
\displaystyle |\langle
z(s),Bz(s)\rangle|^{2}  ds\big)^{\frac{1}{2}}\cdot$$
Finally using again the fact that $\|z(t)\|\leq
\|z_{0}\|, \; \forall t\ge0, $ we obtain
$$
  \displaystyle\int_{0}^{T}|\langle BS(s)z(t),S(s)z(t)\rangle|ds  \leq
    \big(2T^{\frac{3}{2}}\|B\|^{2}\|z_0\|^{2}+T^{\frac{1}{2}}\big)  \big(\displaystyle\int_{t}^{t+T}
\displaystyle |\langle
z(s),Bz(s)\rangle|^{2}  ds\big)^{\frac{1}{2}}\cdot
 $$
This gives the estimate
$$
\big(\displaystyle\int_{0}^{T}|\langle
BS(s)z(t),S(s)z(t)\rangle|ds\big)^{2}
=O\big( \displaystyle\int_{t}^{t+T}
\displaystyle  |\langle
z(s),Bz(s)\rangle|^{2}   ds\big),
\; t\to+\infty\cdot
$$

  {\bf Step 2}: Weak convergence to the equilibrium.

Let $\varphi \in H$. We shall show that $\langle z(t), \varphi \rangle \to 0$ as $t \to +\infty$. Let $(t_n) \ge 0$ be a sequence of real numbers such that $t_n \to +\infty$ as $n \to +\infty$.

Since $\|z(t_n)\| \le \|z_0\|$ for all $n \ge 1$, the sequence $x_n := \langle z(t_n), \varphi \rangle$ is bounded, so we can extract a convergent subsequence.

Let $x_{\gamma(n)}$ be an arbitrary convergent subsequence of $x_n$. Since $z(t_{\gamma(n)})$ is bounded in the Hilbert space $H$, we can extract an other subsequence, still denoted by $z(t_{\gamma(n)})$, such that
$$
z(t_{\gamma(n)}) \rightharpoonup \phi \in H \quad \text{as } n \to +\infty.
$$
Using the compactness of $B$, we deduce via the dominated convergence theorem that
$$
\displaystyle\int_{0}^{T}|\langle
BS(s)z(t_{\gamma(n)}),S(s)z(t_{\gamma(n)})\rangle|ds \to \displaystyle\int_{0}^{T}|\langle
BS(s)\phi,S(s)\phi \rangle|ds, n\to+\infty.
$$
Moreover, it follows from the estimate
$$ \big(\displaystyle\int_{0}^{T}|\langle
BS(s)z(t),S(s)z(t)\rangle|ds\big)^{2}
=O\big( \displaystyle\int_{t}^{t+T}
\displaystyle  |\langle
z(s),Bz(s)\rangle|^{2}   ds\big),
\; t\to+\infty
$$
that
$$
\displaystyle\int_{0}^{T}|\langle
BS(s)z(t_{\gamma(n)}),S(s)z(t_{\gamma(n)})\rangle|ds \to 0, n\to+\infty.
$$
Hence
$$
\langle B S(t)\phi, S(t)\phi \rangle = 0, \quad \forall t \ge 0,
$$
which, by virtue of~(\ref{Ch1obsw}), implies $\phi = 0$. Hence, $x_n \to 0$ as $n \to +\infty$, and consequently $\langle z(t), \varphi \rangle \to 0$ as $t \to +\infty$. In other words,
$$
z(t) \rightharpoonup 0 \quad \text{as } t \to +\infty.
$$

From the proof of the above theorem, one can see that weak stability still holds if the compactness of the control operator $ B $ is replaced by that of the semigroup $ S(t) $. In the latter case, it also follows from the (VCF) that the stability is, in fact, strong (see also [Bounit and Hammouri, 1999], [Berrahmoune et al., 2001] and [Berrahmoune, 2010]).\\

We have the following similar result regarding strong stability:

\begin{theoreme}
Suppose that (i) $A$ generates a compact  contractive semigroup $S(t)$ and that
(ii) $B\in  \mathcal{L}(H)$ satisfies (\ref{Ch1obsw}).

Then the control $v_0$ strongly stabilizes (\text{BLS}).
\end{theoreme}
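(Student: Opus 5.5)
The plan is to repeat the argument of the previous theorem, with the compactness of $S(t)$ taking the place of the compactness of $B$, and then to use the variation of constants formula (VCF) to upgrade weak convergence to strong convergence.

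\textbf{Preliminaries.} Existence, uniqueness and global existence of the mild solution, as well as the two a priori bounds
$$\|z(t)\|\le\|z_0\| \quad\text{and}\quad \int_0^{\infty}|\langle z(s),Bz(s)\rangle|^2ds<\infty,$$
follow exactly as before, since they only use the dissipativity of $A$ and the boundedness of $B$. Step~1 of the previous proof never used compactness of $B$. Hence the estimate
$$\Big(\int_0^T|\langle BS(s)z(t),S(s)z(t)\rangle|ds\Big)^2=O\Big(\int_t^{t+T}|\langle z(s),Bz(s)\rangle|^2ds\Big)$$
remains valid. Its right-hand side tends to $0$ as $t\to+\infty$ because $|\langle z,Bz\rangle|^2$ is integrable on $\mathbb{R}^+$.

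\textbf{Weak convergence.} Take $t_n\to+\infty$ and extract a subsequence with $z(t_n)\rightharpoonup\phi$. For each $s>0$, $S(s)$ is compact, so $S(s)z(t_n)\to S(s)\phi$ strongly. Consequently
$$\langle BS(s)z(t_n),S(s)z(t_n)\rangle\to\langle BS(s)\phi,S(s)\phi\rangle \quad\text{for every } s>0.$$
These integrands are uniformly bounded by $\|B\|\|z_0\|^2$, so dominated convergence on $[0,T]$ applies. Combined with the estimate above, it gives $\langle BS(s)\phi,S(s)\phi\rangle=0$ for a.e.\ $s\in[0,T]$, and by continuity for all $s\ge0$. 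Assumption (\ref{Ch1obsw}) then forces $\phi=0$. Every sequence therefore has a subsequence converging weakly to $0$, so $z(t)\rightharpoonup0$.

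\textbf{Strong convergence.} This is the step requiring care. Fix $\delta>0$ and use the VCF from time $t-\delta$ to $t$:
$$z(t)=S(\delta)z(t-\delta)-\int_{t-\delta}^{t}\langle z(s),Bz(s)\rangle S(t-s)Bz(s)\,ds.$$
\begin{itemize}
\item For the integral term, Cauchy--Schwarz bounds its norm by $\|B\|\|z_0\|\,\delta^{1/2}\big(\int_{t-\delta}^t|\langle z,Bz\rangle|^2\big)^{1/2}$, which tends to $0$ as $t\to\infty$.
\item For the first term, compactness of $S(\delta)$ converts the weak convergence $z(t-\delta)\rightharpoonup0$ into $S(\delta)z(t-\delta)\to0$ in norm. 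This conversion holds along every sequence, hence as $t\to\infty$.
\end{itemize}
Together these give $\|z(t)\|\to0$.

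\textbf{Stability.} Lyapunov stability is immediate from $\|z(t)\|\le\|z_0\|$. The only delicate points are the pointwise limit for $s>0$ (the value $s=0$ has measure zero, so it is harmless) and the continuity in $s$ needed to pass from ``a.e.'' to ``all $s$''. Both are routine.
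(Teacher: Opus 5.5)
Your proposal is correct and follows essentially the same route as the paper. Weak convergence comes from rerunning the previous argument with compactness of $S(s)$, $s>0$, in place of that of $B$. Strong convergence then comes from the variation of constants formula on a window of fixed length: compactness of the semigroup turns $z(t)\rightharpoonup 0$ into norm convergence of the free term, and Cauchy--Schwarz together with $\int_0^\infty|\langle z,Bz\rangle|^2ds<\infty$ makes the integral term vanish. You also make explicit some details the paper leaves implicit, namely pointwise convergence for $s>0$, dominated convergence, and the arbitrariness of $T$ needed to obtain $\langle BS(s)\phi,S(s)\phi\rangle=0$ for all $s\ge 0$.
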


{\bf Proof}
    From the same arguments as in the above theorem, we can see that, by using the compactness of $S(t)$ in place of that of $B$, the solution $z(t)$ of the   system (\text{BLS}), closed with $v_0$, tends weakly to $0$ as $t \to +\infty$.\\
Moreover, for any given $T>0$, the variation of constants formula implies
\[
z(t+T)=S(T)z(t)+\int_{t}^{t+T} v_0(s)\, S(t+T-s)B z(s)\, ds.
\]
By again using the compactness of \(S(t)\), we deduce that $S(T)z(t)\rightarrow 0,$ as $t\to+\infty$.
\\
Moreover, we have
$$
\|\int_{t}^{t+T} v_0(s)\, S(t+T-s)B z(s)\, ds\|
\le \|B\|\:\sqrt{T} \:\|z_0\| \bigg(\int_{t}^{t+T} |\langle z(s),B z(s) \rangle|^2 \, ds\bigg)^{1/2} \to 0\; \mbox{as} \; t\to +\infty.
$$
This achieves the proof.

 \begin{remarque}

\begin{enumerate}

 \item  From the proofs, it is not clear whether the (necessary) condition $G \subset I_w$
is also sufficient for weak stabilization in general.

\item  Notice that, in the case of a self-adjoint operator $B$, we have $G=\mathcal{B}$.
However, if for instance, $B$ is skew-adjoint, then $G=\{0\}$, which is not necessarily the case for $\mathcal{B}$.

\item  Observing that $ G $ is invariant under $ S(t) $, we see that if $ G^\perp $ is invariant under $ S^*(t) $ and both $ G $ and $ G^\perp $ are invariant under $ B, $ then one can use the orthogonal decomposition
$
H = G \oplus G^\perp.
$
Indeed, under the (necessary) condition $G \subset I_w, $ the initial system induces a weakly stable dynamics on $ G $, while on $ G^\perp $, the necessary and sufficient conditions for weak stabilization coincide.

\end{enumerate}

\end{remarque}

\subsection{Application to the undamped wave equation}

Let $\Omega=(0,1)$ and  $Q = \Omega \times (0,+\infty)$, and consider the wave equation

\begin{equation}\label{Ch1wave2}
\left\{
\begin{array}{ll}
\displaystyle \frac{\partial^2 z}{\partial t^2} (x,t) = \displaystyle \frac{\partial^2 z}{\partial x^2}  (x,t) + v(t) z(x,t), & \text{in } Q,\\
\\
z(0,t)=z(1,t) = 0, & \text{on } t\in (0,+\infty).
\end{array}
\right.
\end{equation}
Let  $A_1 = \partial_{xx}$  with $D(A_1)= H_0^1(0,1)\cap H^2(0,1).$ The space $V=L^2(\Omega)$ possesses an orthonormal basis $(\varphi_j=\sqrt 2\sin(j\pi x))_{j \ge 1}$ of eigenfunctions of the operator $-A_1 $   with DBC, associated with eigenvalues $(\lambda_j= (j\pi)^2)_{j \ge 1}$.\\
The system~(\ref{Ch1wave2}) can be written in the abstract form~(BLS) by setting
$$
H = V_1 \times V, \quad
B = \begin{pmatrix} 0 & 0 \\ I & 0 \end{pmatrix} \quad \, \mbox{and} \,
A = \begin{pmatrix} 0 & I \\ A_1 & 0 \end{pmatrix}.
$$
For $z_1 = (y_1, w_1)$ and $z_2 = (y_2, w_2)$ in $H$, we define
$$
\langle z_1, z_2 \rangle = \langle y_1, y_2 \rangle_{H_0^1(0,1)} + \langle w_1, w_2 \rangle_{L^2(0,1)},
$$
so that $H$ is a separable Hilbert space.\\
Moreover, the operator $A$ generates the semigroup $S(t)$ defined, for all $\psi = \sum_{n=1}^\infty \begin{pmatrix} a_n \\ b_n \end{pmatrix} \varphi_n$, by
$$
S(t)\psi = \sum_{n=1}^\infty
\begin{pmatrix}
a_n \cos(n\pi t) + \frac{b_n}{n\pi} \sin(n\pi t) \\
\\
- n\pi a_n \sin(n\pi t) + b_n \cos(n\pi t)
\end{pmatrix} \varphi_n.
$$
Then
$$
\langle B S(t)\psi, S(t)\psi \rangle = \sum_{n=1}^\infty n\pi \Big[ \frac{1}{2} \Big(a_n^2 - \frac{b_n^2}{(n\pi)^2} \Big) \sin(2  n\pi t) - \frac{a_n b_n}{n\pi} \cos(2  n\pi t) \Big].
$$
Hence,
$$
\langle S(t)\psi, B S(t)\psi \rangle = 0, \ \forall t \ge 0 \quad \Rightarrow \quad a_n = b_n = 0, \;\; \forall n \ge 1,
$$
$$
\Rightarrow  \psi = 0.
$$
Thus condition~(\ref{Ch1obsw}) holds.\\
We conclude that the feedback control
$$
v_0(t) = -  \displaystyle \int_\Omega z(x,t) \frac{\partial z}{\partial t} (x,t) \, dx $$
ensures the weak stabilization of system~(\ref{Ch1wave2}).

\begin{remarque}

\begin{enumerate}
\item  The authors in [Ball and Slemrod, 1979] raised the question of whether strong convergence holds for the stabilized state of the bilinear undamped wave equation.

\vspace{0.25cm}

\item Strong stabilization of the undamped  bilinear systems has been first proved in [Muller, 1989].
 More precisely, the author showed that  the stability of the bilinear undamped wave equation, with the quadratic control $v_0$,
is indeed strong. However, no a priori estimates for the rate of energy decay are available. It turns out that,
for every solution, the energy converges to zero as $t \to \infty$, but the decay may be arbitrarily slow.

\item  Under a strong observation assumption, the control $v_0$ yields the estimate (see [Berrahmoune, 1999] and [Ouzahra, 2008])
$$
\|z(t)\| = O\Big(\frac{1}{\sqrt{t}}\Big).
$$
 This  estimate is optimal, as can be seen by taking $A = 0$ and $B = I$.

\item Next, we provide a class of controls, which includes the quadratic one, that enhances the degree of stabilization and allows the achievement of any prescribed polynomial decay rate for the system state.

\end{enumerate}

\end{remarque}

\section{ Strong Stabilization}

\subsection{Necessary  conditions }

In the sequel, we give necessary conditions for strong stabilization of system~(\text{BLS}).
To this end, we use again the following sets:
$$
\mathcal{B} = \{\, y \in H \mid BS(t)y = 0,\; \forall t \geq 0 \,\},
$$
and
$$ I_s=\{y\in H/ S(t)y\rightarrow 0, \;\mbox{as}\;
t\to+\infty\}.$$

Then it comes from Theorem-1 that

\begin{corollaire}

    (i)     If the system (\text{BLS})  is  strongly stabilizable, then we have  $\mathcal{B}\subset I_s$.

(ii) If in addition $A$ generates a semigroup  $S(t)$ of isometries on $H$, then we have the following necessary condition for strong stabilizability of the system (\text{BLS}):
\begin{equation}\label{Ch1BS}
 B S(t)y = 0, \forall t\ge 0 \Rightarrow y = 0.
 \end{equation}
\end{corollaire}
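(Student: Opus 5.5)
For part (i), I will apply Proposition~\ref{NC-FTS} (the "Theorem-1" mentioned in the text) to the closed-loop system. Assume (BLS) is strongly stabilizable, and let $v(t)=V(z(t))$ be a strongly stabilizing feedback. Set $F(t,y)=V(y)By$ for $t\ge 0$, $y\in H$. Then $F(t,0)=0$, so the closed-loop system has exactly the form (\ref{Ch1Sf}) and is strongly stable.

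I will then check that $\mathcal{B}\subset\mathcal{F}$. If $y\in\mathcal{B}$, then $BS(t)y=0$ for all $t\ge0$. Hence $F(t,S(t)y)=V(S(t)y)\,BS(t)y=0$ for every $t\ge0$, so $y\in\mathcal{F}$. Proposition~\ref{NC-FTS} gives $\mathcal{F}\subset I_s$, and therefore $\mathcal{B}\subset I_s$. The one point to make explicit is the uniqueness built into the definition of stability. It is what guarantees that $z(t)=S(t)y$ is \emph{the} closed-loop trajectory from $y$, and this is already used in the proof of Proposition~\ref{NC-FTS}.

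For part (ii), suppose in addition that $S(t)$ is a semigroup of isometries. Then $\|S(t)y\|=\|y\|$ for all $t\ge0$. So $S(t)y\to0$ strongly forces $\|y\|=0$, which gives $I_s=\{0\}$. Combining this with (i) yields $\mathcal{B}\subset\{0\}$. Unwinding the definition of $\mathcal{B}$, this is exactly the implication (\ref{Ch1BS}).

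I do not expect a real obstacle, since the argument is a direct specialization. The only step needing care is the identification of the closed-loop system with (\ref{Ch1Sf}). This works whatever feedback $V$ is chosen, because the factor $BS(t)y=0$ kills the nonlinearity regardless of the value of $V(S(t)y)$.
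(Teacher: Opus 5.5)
Your proposal is correct and follows essentially the same route as the paper. The paper obtains (i) from Proposition~\ref{NC-FTS} with $F(t,y)=V(y)By$ and the inclusion $\mathcal{B}\subset\mathcal{F}$, exactly as in its weak-stabilization corollary, and (ii) from the fact that a semigroup of isometries forces $I_s=\{0\}$.
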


As an application to  the control $v_0$, we have the following
corollary:
\begin{corollaire}
  (i) A necessary condition for the control $v_0$  to be a
  strongly stabilizing one for
the system (\text{BLS})  is $G\subset I_s,$ where $G=\{y\in H/ \langle BS(t)y,S(t)y\rangle = 0, \forall
t\ge 0 \}$.

(ii) If  $A$ generates a semigroup  $S(t)$ of {\blue isometries} on $H$, then
 the following condition:
\begin{equation}\label{Ch1obswbis}
\langle BS(t)y,S(t)y\rangle = 0, \; \forall t\ge 0 \Longrightarrow y
= 0,
\end{equation}
 is necessary for the feedback $v_0$ to be a strongly stabilizing control for (\text{BLS}) .

\end{corollaire}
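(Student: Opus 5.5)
Part (i) follows directly from Proposition~\ref{NC-FTS}, applied to the closed-loop system. We take $F(t,y)=-\langle y,By\rangle By$, which is the feedback $v_0$ written as a function of the state, so that (BLS) closed with $v_0$ has exactly the form (\ref{Ch1Sf}). Since $F_0$ is locally Lipschitz, solutions are unique, and $F(t,0)=0$. The set $\mathcal{F}$ associated with this $F$ is
$$\mathcal{F}=\{y\in H/\ \langle S(t)y,BS(t)y\rangle BS(t)y=0,\ \forall t\ge 0\}.$$
Every $y\in G$ makes the scalar factor vanish for all $t$, so $G\subset \mathcal{F}$. Proposition~\ref{NC-FTS} (strong case) gives $\mathcal{F}\subset I_s$, hence $G\subset I_s$. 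Equivalently, for $y\in G$ the function $z(t)=S(t)y$ is the unique solution of the closed-loop system with $z(0)=y$, and strong stability forces $S(t)y\to 0$.

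For part (ii), we assume $S(t)$ is a semigroup of isometries and take $y$ satisfying the premise of (\ref{Ch1obswbis}), that is, $y\in G$. By part (i), $S(t)y\to 0$ strongly as $t\to+\infty$. Since $\|S(t)y\|=\|y\|$ for all $t\ge0$, letting $t\to+\infty$ yields $\|y\|=0$, i.e.\ $y=0$. This is exactly (\ref{Ch1obswbis}).

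The only point requiring care is in (i): the proposition needs the closed-loop solution from $y\in G$ to be unique, so that it coincides with $S(t)y$. One checks that $S(t)y$ is a mild solution, because the integral term in the variation of constants formula vanishes when $\langle S(s)y,BS(s)y\rangle\equiv0$. Uniqueness then follows from the local Lipschitz property of $F_0$ already used in the weak stabilization theorem. Beyond this, both parts are immediate consequences of Proposition~\ref{NC-FTS}.
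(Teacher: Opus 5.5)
Your proof is correct and follows the paper's route. The paper's (implicit) argument is the same: apply Proposition~\ref{NC-FTS} to the closed-loop nonlinearity $F(t,y)=-\langle y,By\rangle By$, so that $G=\mathcal{V}\subset\mathcal{F}$ as in the earlier corollary, and then use $\|S(t)y\|=\|y\|$ to get $y=0$ in the isometric case. Your check that $S(t)y$ is a mild solution for $y\in G$, and hence the unique one, is the only detail the paper leaves unstated.
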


\begin{remarque}

\begin{enumerate}

\item If $S(t)$ is not a semigroup of isometries, then the condition (\ref{Ch1obswbis})  is not, in general,  necessary for the stabilization
of (\text{BLS})  by the control $v_0$, as we can see when $S(t)$ is stable and $B=0$.

\item The point (ii) of the corollary does not apply for weak stabilizability, as we can see by taking
$Az=- \displaystyle\frac{\partial}{\partial x} z, \forall z\in D(A)=\{z\in H^1(0,\infty)\;/z(0)=0\} $ and $B=0$.

\item  The condition  (\ref{Ch1obsw}) is weaker than the following  one:
\begin{equation}\label{Ch1balT}
\langle B S(t)z, S(t)z\rangle = 0, \quad \forall\, 0 \leq t \leq T \;\;\Rightarrow\;\; z = 0.
\end{equation}

\item Note that $(\ref{Ch1obswbis}) \not\Rightarrow (\ref{Ch1balT})$, as can be seen by taking
$$
H=L^2(0,+\infty), \quad A = -\frac{\partial}{\partial x}, \quad
\mathcal{D}(A) = \{y\in H^1(0,+\infty) \mid y(0)=0\},
$$
and
$$
Bz = \mathbf{1}_{[a,+\infty)} \, z, \quad \mbox{with} \quad a>T.
$$

\end{enumerate}

\end{remarque}

\subsection{Sufficient conditions for strong stabilization}
$\bullet$  To construct a stabilizing control $ v(t) $ for system (BLS), a natural approach is to formally compute the time derivative of the energy
$$
E(t) := \frac{1}{2} \| z(t) \|^2,
$$
yielding:
$$
\frac{dE(t)}{dt} \le {\cal R}e \left( v(t) \langle Bz(t), z(t) \rangle \right).
$$
$\bullet$  To ensure that the energy is non-increasing, one may consider feedback controls of the form $ v(t) = f(z(t)) $, where $ f $ satisfies the following energy dissipation inequality:
$$
{\cal R}e \left( f(z(t)) \langle Bz(t), z(t) \rangle \right) \le 0, \quad \forall t \ge 0.
$$
$\bullet$ As a class of feedback controls that satisfy this inequality, we consider the following family of controls:

\begin{equation}\label{Ch1p}
v_r(t) = -\frac{\langle z(t), Bz(t) \rangle}{\| z(t) \|^r} \mathbf{1}_{\big(z(t) \ne 0 \big)}, \quad r \in \mathbb{R}.
\end{equation}
In the following, we investigate the  strong stabilization of system (BLS) using the control (\ref{Ch1p}).\\
With the control (\ref{Ch1p}), the system (BLS) becomes
\begin{equation}\label{Ch1cloosed}
\frac{dz(t)}{dt}=Az(t)+F_r(z(t)),\ z(0)=z_{0},
\end{equation}
where $F_r(y)=-\displaystyle\frac{\langle y,By\rangle }{\|y\|^{r}}By, $ if $ y\neq0$ and  $F_r(0)=0.$\\

The following result provides conditions for the existence and uniqueness of a solution to the closed-loop system (\ref{Ch1cloosed}).

\begin{theoreme}

Let $A$ generate a semigroup of contractions $S(t)$ and let $B\in
 \mathcal{L}(H)$. Then for all $r\in (-\infty,2]$, the system
(\ref{Ch1cloosed}) possesses a unique global mild solution.

\end{theoreme}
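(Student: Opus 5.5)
The plan is to treat the closed-loop system (\ref{Ch1cloosed}) as a semilinear Cauchy problem $\dot z = Az + F_r(z)$. We build a local mild solution with a fixed-point argument in the region where $F_r$ is locally Lipschitz. We then extend it globally using the energy dissipation, and handle the singular point $y=0$ separately.

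\textbf{Step 1: regularity of $F_r$ away from the origin.} On $H\setminus\{0\}$ the map $y\mapsto \langle y,By\rangle By$ is locally Lipschitz, since it is a bounded trilinear expression. The map $y\mapsto \|y\|^{-r}$ is locally Lipschitz wherever $\|y\|\ge \delta>0$. Hence $F_r$ is locally Lipschitz on $H\setminus\{0\}$ for every real $r$. Moreover
$$\|F_r(y)\|\le \|B\|^2\|y\|^{3-r}.$$
Because $r\le 2$, the exponent satisfies $3-r\ge 1$, so $F_r(y)\to 0$ as $y\to 0$ and $F_r$ is continuous on all of $H$.

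For $r<2$ I would also check that $F_r$ is locally Lipschitz on all of $H$, including $0$. For $y_1,y_2$ in a ball, write $F_r(y_1)-F_r(y_2)$ by splitting the factors. The dangerous term is
$$\bigl|\|y_1\|^{-r}-\|y_2\|^{-r}\bigr|\,|\langle y_1,By_1\rangle|\,\|By_1\|.$$
Assuming $\|y_2\|\le\|y_1\|$, the mean value theorem bounds this by roughly $\|y_2\|^{-r-1}\|y_1-y_2\|\,\|y_1\|^3$. I expect this to be controlled by $C\|y_1-y_2\|$, possibly after splitting into the cases $\|y_2\|\ge \|y_1\|/2$ and $\|y_2\|<\|y_1\|/2$. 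In the second case one uses $\|y_1-y_2\|\ge \|y_1\|/2$ together with the bound $\|F_r(y)\|\le C\|y\|^{3-r}$.

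\textbf{Step 2: local existence and uniqueness.} With local Lipschitz continuity in hand, the standard Banach fixed-point theorem on $C([0,T];H)$ applied to the variation of constants formula
$$z(t)=S(t)z_0+\int_0^t S(t-s)F_r(z(s))\,ds$$
gives a unique local mild solution, which is classical when $z_0\in D(A)$. The solution exists up to a maximal time $T_{\max}$, and $\|z(t)\|\to\infty$ as $t\to T_{\max}$ if $T_{\max}<\infty$.

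\textbf{Step 3: global existence.} For $z_0\in D(A)$, dissipativity of $A$ gives
$$\frac{d}{dt}\|z\|^2\le -2\,\frac{\langle z,Bz\rangle^2}{\|z\|^r}\le 0.$$
A density argument, using continuous dependence on the initial data from the fixed-point construction, extends $\|z(t)\|\le\|z_0\|$ to all $z_0\in H$. This rules out blow-up, so $T_{\max}=\infty$.

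\textbf{Step 4: the borderline case $r=2$.} Here $F_2$ is homogeneous of degree one, bounded by $\|B\|^2\|y\|$, and it is not Lipschitz at $0$ in general. This is where I expect the main obstacle. For existence, if $z_0=0$ take $z\equiv 0$; otherwise apply Steps 1–3 as long as $z(t)\neq 0$. For uniqueness, and to show that a solution starting at $z_0\ne0$ never reaches $0$ in finite time (or stays at $0$ once it gets there), I would use the linear growth of $F_2$ and Gronwall's inequality. From the variation of constants formula, any two solutions satisfy the local Lipschitz estimate on any interval where both stay away from $0$. Near $0$, any solution satisfies $\|z(t)\|\le \|z(t_0)\|e^{\|B\|^2(t-t_0)}$. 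A backward-in-time version of this estimate shows that a solution cannot reach $0$ in finite time from a nonzero state. Uniqueness from $0$ also follows from Gronwall, since $\|z(t)\|\le \|B\|^2\int_0^t\|z\|$ forces $z\equiv0$. The backward estimate is the delicate point, because $S(t)$ is only a semigroup and cannot be run backwards. My first attempt would be to compare $\|z\|^2$ via the energy inequality, or to exploit the homogeneity by normalizing $w=z/\|z\|$ on intervals where $z\ne0$. Failing that, I would settle for uniqueness among solutions via a one-sided Lipschitz estimate near $0$.
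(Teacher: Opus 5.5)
For $r<2$ your route is essentially the paper's. You show $F_r$ is Lipschitz on bounded subsets of $H$, including near $0$, get a local mild solution by the fixed-point argument, and use the dissipation inequality $\|z(t)\|\le\|z_0\|$ to exclude blow-up. Your split $\|y_2\|\ge\|y_1\|/2$ versus $\|y_2\|<\|y_1\|/2$ is a valid substitute for the paper's split into $1\le r\le 2$ (mean value theorem for $s\mapsto s^r$) and $r<1$ (writing $F_r=\|y\|^{2-r}F_2$). One small correction: for $r<-1$ the mean-value factor is $|r|\,\|y_1\|^{-r-1}$ rather than $\|y_2\|^{-r-1}$, which changes nothing.

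The genuine gap is Step 4, and it rests on a false premise: $F_2$ \emph{is} Lipschitz near $0$, in fact globally Lipschitz on $H$. Your own Step 1 argument already gives this. In both cases it produces a bound $\|F_r(y_1)-F_r(y_2)\|\le C_r\|B\|^2\|y_1\|^{2-r}\|y_1-y_2\|$, which is uniform when $r=2$. Directly, write $F_2(y)=\langle Bu,u\rangle By$ with $u=y/\|y\|$ and use $\bigl\|y/\|y\|-z/\|z\|\bigr\|\le 2\|y-z\|/\|z\|$; this gives the constant $5\|B\|^2$. That is exactly the paper's Case 1 at $r=2$ (constant $(3+r)\|B\|^2R^{2-r}$), and the paper's Case 2 relies on it. So Steps 1--3 cover $r=2$ verbatim.

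As written, however, Step 4 does not prove uniqueness. It ends in alternatives (``failing that, I would settle for\dots''). Moreover, the auxiliary claim it aims at is false in general: that a trajectory starting from $z_0\neq0$ cannot reach $0$ in finite time. Take $B=0$ and $A=-\partial_x$ on $L^2(0,1)$ with zero inflow condition. Then $S(t)=0$ for $t\ge1$, so every trajectory vanishes at $t=1$; the paper's Part III makes the same point about nilpotent semigroups and the failure of backward uniqueness.

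Uniqueness never needed that claim anyway. Let $t^*=\inf\{t\ge 0:\ z(t)\neq w(t)\}$ for two mild solutions $z,w$. Their common value at $t^*$ is either nonzero, which contradicts local Lipschitz uniqueness away from $0$, or zero. In the latter case, contractivity and $\|F_2(y)\|\le\|B\|^2\|y\|$ give $\|z(t)\|\le\|B\|^2\int_{t^*}^t\|z(s)\|\,ds$, and likewise for $w$. This forces both solutions to vanish on $[t^*,\infty)$.
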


{\red Proof}.

 Let us show that the map $F_r$ is locally Lipschitz
from $H$ to $H$. Let  $z,y\in H$ such that
 $ 0<\|y\| \le \|z\| \le R$.
 Then we have
$$
 \frac{\langle Bz,z\rangle}{\|z\|^r}Bz- \frac{\langle By,y\rangle}{\|y\|^r}By =
\frac{\langle Bz,z \rangle Bz-\langle By,y\rangle
By}{\|z\|^r}-\frac{(\|z\|^r-\|y\|^r)\langle By,y \rangle
By}{\|z\|^r\|y\|^r}\cdot
$$
From the relation
$$
 \langle Bz,z \rangle Bz-\langle By,y\rangle By=\langle Bz,z \rangle (Bz-By)+(\langle Bz,z-y\rangle+\langle Bz-By,y\rangle )By,
$$
we deduce that
$$
\frac{\|\langle Bz,z \rangle Bz-\langle By,y \rangle
By\|}{\|z\|^r}\le 3 \|B\|^2\|z-y\|\|z\|^{2-r}.
$$
It follows that
$$
\begin{array}{ccc}
  \|\frac{\langle Bz,z \rangle}{\|z\|^r}Bz-
\frac{\langle By,y\rangle}{\|y\|^r}By\| &\le& 3
\|B\|^2\|z-y\|\|z\|^{2-r} \\&+&\|B\|^2
|\|z\|^r-\|y\|^r|\frac{\|y\|^{3-r}}{\|z\|^{r}}\cdot
\end{array}
$$
Two situations arise:\\

{\bf Case 1 :} $1\le r \le 2$. \\

By making use of the real function  $ t\mapsto t^r,$ one can see
that
$$ |\|z\|^r-\|y\|^r| \le r \|z\|^{r-1}|\|z\|-\|y\||, $$
 and hence $$ |\|z\|^r-\|y\|^r|\frac{\|y\|^{3-r}}{\|z\|^{r}}\le r \|z-y\|\|y\|^{2-r}\cdot$$
We deduce that $F_r$ is locally Lipschitz.\\

{\bf Case 2 :}  $r < 1$.\\

Remarking that   $F_r=\|y\|^{2-r}F_2$ and
\begin{equation}\label{Ch1Fr}
F_r(y)-F_r(z) =
\|y\|^{2-r}(F_2(y)-F_2(z))+(\|y\|^{2-r}-\|z\|^{2-r})F_2(z),
\end{equation}
we deduce, since $2-r\ge 1$, that $F_r$ is locally
Lipschitz. Then the system (\text{BLS}) admits a unique mild solution defined on a
maximal interval $[0,t_{\max})$.\\
Then, using approximation argument, we can show as in the case $r=0$ that
\begin{equation}\label{Ch1eneg}
\|z(t)\|^{2} -  \|z(\tau)\|^2 \leq
-2\displaystyle\int_\tau^t\frac{|\langle
z(s),Bz(s)\rangle|^{2}}{\|z(s)\|^{r}}  ds,\; 0\le \tau\le t <
t_{\max}\cdot
\end{equation}
It follows that $\|z(t)\|\leq \|z_{0}\|,\ \forall t\in
[0,t_{\max})$, and hence $z(t)$ is a global solution i.e.,
$t_{\max}=+\infty$.\\
The stability result of (\ref{Ch1cloosed}) is stated as follows.

\begin{theoreme}
  Suppose that (i) $A$ generates a semigroup of contractions $S(t)$,
(ii) $B\in  \mathcal{L}(H)$  and (iii) there exist $ \delta, T>0$  such that
\begin{equation}\label{Ch1obss}
\int_{0}^{T}|\langle BS(t)y,S(t)y\rangle|dt\geq \delta\|y\|^{2},\
\forall y \in H.
\end{equation}

 Then for all ${\red r\in (-\infty,2)}$,  the feedback (\ref{Ch1p}) strongly stabilizes (BLS) with
the following decay estimate

\begin{equation}\label{Ch1estim}
{\blue \|z(t)\|=O(t^{-\frac{1}{2-r}})},\ as \  t\rightarrow +\infty.
\end{equation}

\end{theoreme}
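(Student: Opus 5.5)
The plan is to compare the closed-loop trajectory with the free semigroup on windows of length $T$ (as in Step 1 of the Ball--Slemrod theorem), and then turn the observability inequality (\ref{Ch1obss}) into a discrete recursive inequality for the energy. Existence of a unique global mild solution is given by the previous theorem, together with the energy inequality (\ref{Ch1eneg}) and $\|z(t)\|\le\|z_0\|$. If $z(t_0)=0$ for some $t_0$, then by uniqueness $z\equiv 0$ afterwards, so we may assume $z(t)\neq 0$ for all $t$.

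\textbf{Step 1 (window estimate).} Fix $t\ge 0$. Write the variation of constants formula on $[t,t+s]$, $s\in[0,T]$:
$$z(t+s)-S(s)z(t)=\int_t^{t+s}v_r(\tau)S(t+s-\tau)Bz(\tau)\,d\tau .$$
By Cauchy--Schwarz and $\|z(\tau)\|\le\|z(t)\|$ for $\tau\ge t$,
$$\|z(t+s)-S(s)z(t)\|\le \|B\|\,\|z(t)\|^{r/2}\sqrt T\Big(\int_t^{t+T}\frac{|\langle z,Bz\rangle|^2}{\|z\|^{r}}d\tau\Big)^{1/2}.$$
Care is needed with the sign of $r$: write $|v_r|\,\|Bz\|\le \|B\|\,\|z\|^{1-r/2}\cdot|\langle z,Bz\rangle|\,\|z\|^{-r/2}$ and bound $\|z(\tau)\|^{1-r/2}\le\|z(t)\|^{1-r/2}$, which is valid since $r<2$. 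Setting
$$D(t):=\int_t^{t+T}\frac{|\langle z,Bz\rangle|^2}{\|z\|^r}\,d\tau,$$
we get $\|z(t+s)-S(s)z(t)\|\le C\|z(t)\|^{1-r/2}D(t)^{1/2}$. Next, as in Step 1 of the earlier proof,
$$|\langle BS(s)z(t),S(s)z(t)\rangle|\le 2\|B\|\,\|z(t)\|\,\|z(t+s)-S(s)z(t)\|+|\langle Bz(t+s),z(t+s)\rangle|.$$
For the last term, Cauchy--Schwarz gives $\int_0^T|\langle Bz,z\rangle|\,ds\le \sqrt T\,\|z(t)\|^{r/2}D(t)^{1/2}$ (using $r\ge0$; if $r<0$, bound $\|z(\tau)\|^{r/2}$ more crudely, which is harmless since only the exponent bookkeeping matters). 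Integrating over $s\in[0,T]$ and applying (\ref{Ch1obss}) to $y=z(t)$ yields
$$\delta\|z(t)\|^2\le C\big(\|z(t)\|^{2-r/2}+\|z(t)\|^{r/2}\big)D(t)^{1/2}.$$

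\textbf{Step 2 (recursion).} By (\ref{Ch1eneg}), $\|z(t)\|^2-\|z(t+T)\|^2\ge 2D(t)$. Since $\|z(t)\|\le\|z_0\|$, we have $\|z(t)\|^{2-r/2}\le \|z_0\|^{2-r}\|z(t)\|^{r/2}$, so $D(t)\ge c\,\|z(t)\|^{4-r}$, with $c$ depending on $\|z_0\|$. Setting $x_k=\|z(kT)\|^2$ gives
$$x_k-x_{k+1}\ge c\,x_k^{\,1+\frac{2-r}{2}} .$$
This is the key step and the main obstacle: the exponent must come out exactly as $1+(2-r)/2$ uniformly in $r<2$, with a constant depending only on $\|z_0\|$. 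If the crude handling for $r<0$ spoils the exponent, I would instead estimate $\int|\langle Bz,z\rangle|$ through Hölder with weight $\|z\|^{r}$ and the monotonicity of $\|z\|$.

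\textbf{Step 3 (decay).} The standard lemma for sequences with $x_k-x_{k+1}\ge c\,x_k^{1+\alpha}$, $\alpha=(2-r)/2>0$, gives $x_k=O(k^{-1/\alpha})$. Indeed, by the mean value theorem, $x_{k+1}^{-\alpha}-x_k^{-\alpha}\ge \alpha c\,x_k^{1+\alpha}x_k^{-\alpha-1}=\alpha c$, which gives the claim. Hence $\|z(kT)\|=O(k^{-1/(2-r)})$. Since $\|z(t)\|$ is nonincreasing, this extends to all $t$ and gives (\ref{Ch1estim}).
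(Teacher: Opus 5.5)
For $r\in[0,2)$ your argument is correct and follows the paper's proof. You use the same comparison of $z(t+\cdot)$ with $S(\cdot)z(t)$ on a window of length $T$, the same splitting of $|\langle BS(s)z(t),S(s)z(t)\rangle|$, and the same combination of (\ref{Ch1obss}) with (\ref{Ch1eneg}). Your first display in Step 1 has $\|z(t)\|^{r/2}$ where it should be $\|z(t)\|^{1-r/2}$, but you correct this immediately. Because you keep $\|z(t)\|$ rather than $\|z(t+T)\|$ on the left, you get the explicit recursion $x_k-x_{k+1}\ge c\,x_k^{1+\alpha}$. Your mean value telescoping then neatly replaces the Ammari--Tucsnak lemma.

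The case $r<0$, however, is not proved. The bound $\int_t^{t+T}|\langle Bz,z\rangle|\,d\tau\le\sqrt T\,\|z(t)\|^{r/2}D(t)^{1/2}$ needs $\|z(\tau)\|^{r}\le\|z(t)\|^{r}$, which fails for $r<0$. Monotonicity, which is also all your H\"older fallback can use, only gives $\|z(\tau)\|^{r}\le\|z(t+T)\|^{r}$. This is not just exponent bookkeeping: it changes the time at which the norm is evaluated.

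Carry it through, together with $\|z(t)\|^{2-r/2}\le\|z_0\|^{2-r}\|z(t+T)\|^{r/2}$. You obtain $D(t)\ge c\,\|z(t)\|^{4}\|z(t+T)\|^{-r}\ge c\,\|z(t+T)\|^{4-r}$, so only the implicit recursion $x_k-x_{k+1}\ge c\,x_{k+1}^{1+\alpha}$ holds. That is exactly the form the paper reaches, and it is why the paper needs the Ammari--Tucsnak lemma.

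Your Step 3 does not handle this form. The mean value theorem now gives only $x_{k+1}^{-\alpha}-x_k^{-\alpha}\ge \alpha c\,(x_{k+1}/x_k)^{1+\alpha}$, which has no uniform positive lower bound. The missing step is a case distinction:
\begin{itemize}
\item if $x_{k+1}\ge x_k/2$, the increment is at least $\alpha c\,2^{-1-\alpha}$;
\item if $x_{k+1}<x_k/2$, then $x_{k+1}^{-\alpha}-x_k^{-\alpha}\ge(2^{\alpha}-1)x_k^{-\alpha}\ge(2^{\alpha}-1)x_0^{-\alpha}$, using $\alpha>0$ and $x_k\le x_0$.
\end{itemize}
Either way $x_k^{-\alpha}$ grows at least linearly in $k$, and your conclusion follows. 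Equivalently, you can split in Step 1 on whether $\|z(t+T)\|\ge\frac12\|z(t)\|$, which restores the explicit recursion. Or you can simply invoke the lemma as the paper does.
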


\begin{remarque}

\begin{enumerate}

\item By taking $ B = C^*C $ in the estimate (\ref{Ch1obss}), where $ C $ is an output operator, one recovers the classical concept of exact observability for linear systems.
In this case, the quadratic  feedback $v_0$ (i.e., when $r=0$) depends only on the output $ C y(t) $.

\item Notice that the estimate~(\ref{Ch1estim}) is optimal for the control $v_r$.

\end{enumerate}

\end{remarque}

 {\red Proof.}

  First, let us notice that we can suppose that $y(t)\ne 0, \forall t\ge 0,$ since otherwise we will have $y(t)=0, t\ge \tau$ for some $\tau >0.$\\
 Using  the fact that $S(t)$ is a semigroup of contractions, we
obtain the inequality
$$
|\langle BS(s)z_{0},S(s)z_{0}\rangle|\leq
2\|B\|\|z(s)-S(s)z_{0}\|\|z_{0}\|+|\langle Bz(s),z(s)\rangle|.
$$
It follows from the variation of parameters formula  that
   $$ S(t)z_{0} -z(t)=\displaystyle\int_{0}^{t}\displaystyle\frac{\langle z(s),Bz(s)\rangle}{\|z(s)\|^{r}} S(t-s)Bz(s)ds\cdot
$$
Then using Schwartz's inequality and the fact that $\|z(t)\|\leq
\|z_{0}\|, \; \forall t\ge0, $ we get
$$
\|z(t)-S(t)z_{0}\|\leq
\|B\|\|z_{0}\|^{1-\frac{r}{2}}\big(T\displaystyle\int_{0}^{T}
\displaystyle\frac{|\langle
z(s),Bz(s)\rangle|^{2}}{\|z(s)\|^{r} }ds\big)^{\frac{1}{2}}, \
\forall t\in [0,T].
$$
Replacing $z_{0}$ by $z(t)$ and using the fact that
$$
\|z(t)\|\le \|z_0\|,\; \forall t\ge 0,
$$
we get
$$\hspace{-0.2cm}|\langle BS(s)z(t),S(s)z(t)\rangle|
\leq
2\|B\|^{2}\|z(t)\|^{2-\frac{r}{2}}\bigg(T\displaystyle\int_{0}^{T}
\displaystyle\frac{|\langle
z(t+\tau),Bz(t+\tau)\rangle|^{2}}{\|z(t+\tau)\|^{r}} d\tau
\bigg)^{\frac{1}{2}}$$
$$+|\langle Bz(s+t),z(s+t)\rangle|, \ \forall t,s\geq0.$$

$\bullet$ Let us first suppose that $r\in[0,2).$\\

 Then integrating this last inequality over the interval $[0,T]$ and using H\"older's
inequality, it follows  that
$$\displaystyle\int_{0}^{T}|\langle BS(s)z(t),S(s)z(t)\rangle|ds\leq$$
$$
(2T^{\frac{3}{2}}\|B\|^{2}\|z(t)\|^{2-\frac{r}{2}}+T^{\frac{1}{2}}\|z(t)\|^{\frac{r}{2}})(\displaystyle\int_{t}^{t+T}
\displaystyle\frac{|\langle
z(s),Bz(s)\rangle|^{2}}{\|z(s)\|^{r}} ds)^{\frac{1}{2}}\cdot$$
Finally, we obtain (since $0<r<2$)
\begin{equation}\label{Ch1estim-C1}
\begin{array}{ccc}
  \displaystyle\int_{0}^{T}|\langle BS(s)z(t),S(s)z(t)\rangle|ds \leq\\
\big(2T^{\frac{3}{2}}\|B\|^{2}\|z_0\|^{2-r}
+T^{\frac{1}{2}}\big)\|z(t)\|^{\frac{r}{2}} \big(\displaystyle\int_{t}^{t+T}
\displaystyle\frac{|\langle
z(s),Bz(s)\rangle|^{2}}{\|z(s)\|^{r}} ds\big)^{\frac{1}{2}}\cdot
\end{array}
\end{equation}
This gives the estimate
$$
\bigg(\displaystyle\int_{0}^{T}|\langle
BS(s)z(t),S(s)z(t)\rangle|ds\bigg)^{2}
=O\bigg(\|z(t)\|^r\displaystyle\int_{t}^{t+T}
\displaystyle\frac{|\langle
z(s),Bz(s)\rangle|^{2}}{\|z(s)\|^{r}}  ds\bigg),
$$
as $ t\to +\infty\cdot$\\
Moreover, letting $ K_0=2T^{\frac{3}{2}}\|B\|^{2}\|z_0\|^{2-r}+T^{\frac{1}{2}}$, we deduce from the estimate (\ref{Ch1estim-C1}),
 the observation inequality  (\ref{Ch1obss}) and the estimate (\ref{Ch1eneg}) that
\begin{equation}\label{Ch1estim2-C1}
{\blue \delta^2 \|y(t+T)\|^{4-r}\le K_0^2 \big (\|y(t)\|^{2} -\|y(t+T)\|^{2} \big).}
\end{equation}
Let us recall that for all $r<2,$ we have
$$\displaystyle\int_{0}^{T}|\langle BS(s)z(t),S(s)z(t)\rangle|ds
\leq$$
$$
(2T^{\frac{3}{2}}\|B\|^{2}\|z(t)\|^{2-\frac{r}{2}}+T^{\frac{1}{2}}\|z(t)\|^{\frac{r}{2}})(\displaystyle\int_{t}^{t+T}
\displaystyle\frac{|\langle
z(s),Bz(s)\rangle|^{2}}{\|z(s)\|^{r}} ds)^{\frac{1}{2}}\cdot$$

$\bullet$ Now for  $r<0,$ we have
$$\|y(\tau)\|^{\frac{r}{2}}\leq \|y(t+T)\|^{\frac{r}{2}}, \,\ \forall \tau \in [t,t+T],$$ so we obtain   the following inequality:
$$
\begin{array}{ll}
  \displaystyle\int_{0}^{T}|\langle BS(s)y(t),S(s)y(t)\rangle|ds & \leq
   \|y(t+T)\|^{\frac{r}{2}}\big(2T^{\frac{3}{2}}\|B\|^2 \|y_{_0}\|^{2-r}+T^{\frac{1}{2}} \big) \\
   & \times \big(\displaystyle\int_{t}^{t+T}\frac{|\langle y(\tau),By(\tau)\rangle|^2}{\|y(\tau)\|^{r}} d\tau\big)^{\frac{1}{2}},
\end{array}
$$
from which (\ref{Ch1estim2-C1}) follows for $r<0$.\\
Let us consider the sequence $s_{k}=\|z(kT)\|^{2}$,
$k\in \mathbb{N}.$ Integrating the inequality (\ref{Ch1eneg}) over the
interval $[kT,(k+1)T]$ and using (\ref{Ch1estim2-C1}), we deduce that
$$s_{k}-s_{k+1}\geq c_{1}
s_{k+1}^{2-\frac{r}{2}},$$
where $c_{1}=\frac{\delta^2}{K_0^2}.$\\
Setting  $u(s)=c_{1}s^{\gamma}$ with $\gamma = 2-\displaystyle\frac{r}{2};$ the last inequality may be
written as
$$ s_{k}\ge u(s_{k+1})+s_{k+1},\ k\geq0.$$
We then conclude using  the next  lemma and the fact that $\|z(t)\|$
decreases.
\begin{lemme} (see [Ammari \& Tucsnak, 2000]).\\
 Let $(s_k)$ be a sequence of positive real numbers satisfying
$$
s_{k+1}\le s_k  - C s_{k+1}^{\alpha+2},\; \forall k\ge 0,
$$
where $C > 0$ and  $\alpha>-1$ are constants.\\
Then there exists a positive constant $M$ (depending on $\alpha$ and $C$) such that
\begin{equation}\label{sk}
s_k \le \frac{M}{(k+1)^{\frac{1}{\alpha+1}}},\; \forall k\ge 0.
\end{equation}

\end{lemme}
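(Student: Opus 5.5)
The plan is a comparison argument with the ODE $\dot{u}=-Cu^{\alpha+2}$, whose solutions decay like $t^{-1/(\alpha+1)}$. First, $s_{k+1}\le s_k$, since $Cs_{k+1}^{\alpha+2}\ge 0$. So $(s_k)$ is nonincreasing and bounded by $s_0$. If some $s_k=0$, all later terms vanish and the estimate is trivial. Hence I will assume $s_k>0$ for all $k$ and set $\alpha+1=:\beta>0$.

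The key step is to show that $s_{k+1}^{-\beta}-s_k^{-\beta}\ge c$ for some constant $c>0$ depending only on $\alpha$, $C$ and $s_0$. Summing this telescoping bound gives $s_k^{-\beta}\ge s_0^{-\beta}+ck\ge \min(s_0^{-\beta},c)(k+1)$. Raising to the power $-1/\beta$ then yields (\ref{sk}) with $M=\max(s_0,c^{-1/\beta})$. Strictly speaking, $M$ then also depends on $s_0$, which is harmless in the application, where $s_0=\|z_0\|^2$ is fixed.

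To obtain the increment bound, I will write
$$s_{k+1}^{-\beta}-s_k^{-\beta}=\beta\int_{s_{k+1}}^{s_k}x^{-\beta-1}\,dx$$
and split into two cases. Fix $q\in(0,1)$, say $q=1/2$.

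In the first case, $s_{k+1}\ge q s_k$. Then on $[s_{k+1},s_k]$ we have $x^{-\beta-1}\ge s_k^{-\beta-1}\ge q^{\beta+1}s_{k+1}^{-\beta-1}$. The hypothesis gives $s_k-s_{k+1}\ge Cs_{k+1}^{\beta+1}$, so the integral is at least $\beta q^{\beta+1}C$.

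In the second case, $s_{k+1}<qs_k$. Then $s_{k+1}^{-\beta}>q^{-\beta}s_k^{-\beta}$, so the increment is at least $(q^{-\beta}-1)s_k^{-\beta}\ge (q^{-\beta}-1)s_0^{-\beta}$. This last step uses monotonicity together with $\beta>0$.

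Taking $c$ to be the minimum of the two bounds completes the argument. The main obstacle is this case split. The naive mean value estimate uses the derivative at the wrong endpoint when $s_{k+1}\ll s_k$, and the dichotomy above is what handles that regime. Everything else is bookkeeping.
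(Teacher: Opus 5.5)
Your proof is correct, and it takes a genuinely different route from the paper's.

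The paper uses a barrier (comparison) argument. It takes $f_k=M(k+1)^{-1/(\alpha+1)}$ and, by an asymptotic expansion of $f_k-f_{k+1}$, shows that $f_k-f_{k+1}\le C f_{k+1}^{\alpha+2}$ for $k\ge k_1$ once $M$ is large. It then propagates $s_k\le f_k$ by induction using only the monotonicity of $\phi(x)=x+Cx^{\alpha+2}$, through the chain $\phi(s_{k+1})\le s_k\le f_k\le\phi(f_{k+1})$.

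You instead pass to $w_k=s_k^{-(\alpha+1)}$, which linearizes the model ODE $\dot u=-Cu^{\alpha+2}$, and show that $w_k$ grows by a fixed amount at each step. Your dichotomy $s_{k+1}\ge q s_k$ versus $s_{k+1}<q s_k$ plays the role that monotonicity of $\phi$ plays in the paper: it absorbs the implicit structure of the recursion, where the nonlinear term is evaluated at $s_{k+1}$.

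Your version gives a fully explicit constant, $M=\max(s_0,c^{-1/\beta})$ with $c=\min\bigl(\beta q^{\beta+1}C,(q^{-\beta}-1)s_0^{-\beta}\bigr)$. There is no non-explicit threshold $k_0$ from an asymptotic equivalence. The paper's comparison scheme, in exchange, is a template that works for any candidate supersolution, not only power laws where a linearizing change of variable exists.

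Your remark that $M$ depends on $s_0$ is well taken, and this dependence is unavoidable: the case $k=0$ alone forces $M\ge s_0$. So the statement's ``depending on $\alpha$ and $C$'' is inaccurate as written. The paper's own proof also uses the data, through the requirement $f_{k_1}\ge s_1$. Strictly speaking, its last step covers only $1\le k\le k_1$, so $k=0$ needs the extra choice $M\ge s_0$.
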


{\red Proof of the lemma.}\\
   Define the sequence
$$
f_k=\frac{M}{(k+1)^{\frac{1}{1+\alpha}}},
$$
where $M>0$ will be chosen later.\\
 We will prove that
 $$s_k\le f_k, \; k\ge 0.$$
  Using a first-order expansion for large $k$, we can see that
$$
f_k-f_{k+1}
=
M\Big((k+1)^{-\frac1{1+\alpha}}
-(k+2)^{-\frac1{1+\alpha}}\Big).
$$
$$\sim \frac{M}{1+\alpha}k^{-1} (k+2)^{-\frac{1}{1+\alpha}}
$$
Hence,
$$
\lim_{k\to\infty}
\Big((f_k-f_{k+1})k(k+2)^{\frac1{1+\alpha}}\Big)
=
\frac{M}{1+\alpha}.
$$
Thus there exists $k_0>0$ such that
$$
f_k-f_{k+1}
\le
\frac{2M}{(1+\alpha)k(k+2)^{\frac1{1+\alpha}}},
\qquad k\ge k_0.
$$
We have
$$
\frac{1}{k(k+2)^{\frac1{1+\alpha}}}
\le \frac{2}{(k+2)} \frac{1}{(k+2)^{\frac{1}{1+\alpha}}}=
\frac{2}{(k+2)^{\frac{2+\alpha}{1+\alpha}}},
\qquad k\ge2.
$$
Then, observing that
$$
f_{k+1}^{2+\alpha}
=
\frac{M^{2+\alpha}}{(k+2)^{\frac{2+\alpha}{1+\alpha}}},
$$
we can see that
$$
f_k-f_{k+1}
\le
\frac{4}{(1+\alpha)M^{1+\alpha}}
f_{k+1}^{2+\alpha},
\qquad k\ge k_1 \; (k_1:=k_0+2).
$$
 Now, choose $M$ large enough such that
$$
\frac{4}{(1+\alpha)M^{1+\alpha}}<C \;\;\; \mbox{and}\;\;\;
f_{k_1}\ge s_1.
$$
Then
$$
f_k-f_{k+1}\le C f_{k+1}^{2+\alpha}, \qquad k\ge k_1.
$$
  We prove
$$
s_k\le f_k, \qquad k\ge k_1.
$$
$\star$  For $k=k_1$, the property follows directly from the choice of $M$.
\\
 Let  $k\ge k_1$, and assume that $
s_k\le f_k.$
\\
$\star$  By the induction hypothesis and the estimates above, we obtain
$$
s_{k+1}+C s_{k+1}^{2+\alpha}
\le s_k\le f_k\le
f_{k+1}+C f_{k+1}^{2+\alpha}.
$$
Since the function
$$
\phi(x)=x+C x^{2+\alpha}
$$
is increasing on $\mathbb{R}_+$, this implies
$$
s_{k+1}\le f_{k+1}.
$$
Thus the result holds for $k+1$.\\
 By induction,
$$
s_k\le f_k
=
\frac{M}{(k+1)^{\frac1{1+\alpha}}},
\qquad k\ge k_1,
$$
which give the estimate (\ref{sk}).\\
This also covers the finite number of indices $0\le k < k_1$:
$$
s_k \le s_1\le f_{k_1} \le f_k,\; k=1,..,k_1.
$$

\begin{remarque}

\begin{enumerate}

\item If the Hilbert space $H$ is infinite-dimensional and either the operator
$B$ or the semigroup $S(t)$, $t>0$, is compact, then condition~(\ref{Ch1obss})
cannot be satisfied.

\item In the finite-dimensional case (e.g., $X=R^n$), contraction may be assumed to hold with
respect to another inner product
$$
\langle x, y \rangle_{P} := \langle P x, y \rangle,
$$
where the matrix $P = P^{T}>0$ satisfies the LMI: $PA+A^T P\le 0$.\\
{\blue See} [Gutman, 1981], [Jurjevic and  Quinn, 1978], [Ryan and Buckingham, 1983], [Ryan, E.P. (1984)], [Slemrod, 1978], [Quinn, 1980] {\bf $\cdot \cdot \cdot$}

\end{enumerate}
\end{remarque}

\subsection{ Application to the wave equation with damping}

 Let us consider the following $1D-$wave equation:

\begin{equation}\label{Ch4wave1}
\left\{
\begin{array}{ll}
 z_{tt}(x,t) =  z_{xx}(x,t) + v(t)   z_t(x,t), & \text{in } Q, \\
 \\
z(0,t)=z(1,t)=0 , & t\ge0,
\end{array}
\right.
\end{equation}
where  $Q:=\Omega \times (0,+\infty)$ with $ \Omega = (0,1)$.\\
This system can be written in the abstract form of equation~(BLS) by setting
$$
H = H_0^1(\Omega) \times L^2(\Omega), \quad
\langle (y_1, z_1), (y_2, z_2) \rangle = \langle y_1, y_2 \rangle_{H_0^1(\Omega)} + \langle z_1, z_2 \rangle_{L^2(\Omega)},
$$
and
$$
A = \begin{pmatrix} 0 & I \\ A_1 & 0 \end{pmatrix}, \quad
B = \begin{pmatrix} 0 & 0 \\ 0 & I\!d \end{pmatrix},
$$
where $A_1 = \Delta$ is the Laplacian operator with domain $D(A) = (H^2(\Omega) \cap H_0^1(\Omega)) \times H_0^1(\Omega)$.\\
The spectrum of the operator $-\frac{\partial^2}{\partial x^2}$ with Dirichlet boundary conditions is given by the simple eigenvalues $\lambda_j = (j\pi)^2$, corresponding to eigenfunctions $\phi_j(x) = \sqrt{2}\sin(j\pi x)$, $\forall j \in \mathbb{N}^*$.\\
Let $y = (y_1, y_2) \in H$ with $y_1 = \sum_{j=1}^{\infty} \alpha_j \phi_j$ and $y_2 = \sum_{j=1}^{\infty} \lambda_j^{\frac{1}{2}} \beta_j \phi_j$, where $(\alpha_j, \beta_j) \in \mathbb{R}^2$, $j \geq 1$. We have
$$
\|y\|^2 = \sum_{j=1}^{\infty} \lambda_j (\alpha_j^2 + \beta_j^2).
$$
Separation of variables yields
$$
S(s)y = \sum_{j=1}^{\infty}
\begin{pmatrix}
\alpha_j \cos(\lambda_j^{\frac{1}{2}} s) + \beta_j \sin(\lambda_j^{\frac{1}{2}} s) \\
\\
-\alpha_j \lambda_j^{\frac{1}{2}} \sin(\lambda_j^{\frac{1}{2}} s) + \beta_j \lambda_j^{\frac{1}{2}} \cos(\lambda_j^{\frac{1}{2}} s)
\end{pmatrix}
\phi_j, \quad \forall s \geq 0.
$$
Then we have
$$
\|BS(s)y\|^2 = \sum_{j=1}^{\infty} \lambda_j \left[ \alpha_j^2 \sin^2(j\pi s) + \beta_j^2 \cos^2(j\pi s) - \sin(2j\pi s) \alpha_j \beta_j \right].
$$
It follows that
$$
\int_0^1 \|BS(s)y\|^2 ds = \frac{1}{2} \sum_{j=1}^{\infty} \lambda_j (\alpha_j^2 + \beta_j^2),
$$
so (observe that $B^2=B=B^*$) the observation assumption  holds with $T = 1$.\\
Then, for all $r < 2$, the feedback control
$$
v_r(t) = - \frac{\displaystyle \int_\Omega a(x) |  z_t(x,t) |^2 dx}
{\|\big(z(\cdot,t),z_t(\cdot,t)\big)\|^r} \mathbf{1}_{\{(z(t), \dot{z}(t)) \neq (0,0)\}}
$$
strongly stabilizes system~(\ref{Ch4wave1}) with the decay estimate
$$
\|(z(\cdot,t), z_t(\cdot,t))\| = O\big(t^{-1/(2-r)}\big), \quad \text{as } t \to +\infty.
$$

\section{  Concluding remarks }

\begin{itemize}

\item
Weak stabilization can be  extended to reflexive state spaces.
However, in non-reflexive spaces, convergence is obtained in the weak-$*$
sense, and the observation condition is replaced by a sequential one
(see [Benzaza et al., 2023]).

\item The main difficulty when dealing with non-reflexive state spaces is
the non-differentiability of the norm, which is essential for establishing
energy estimates.

\item The control function $z \mapsto v_r(z)$, with $r < 2$, considered above is $k$-homogeneous
with $k = 2 - r > 0$.

\item In the case $r = 2$, the control $v_2$ is uniformly bounded with respect to the initial
states and is $0$-homogeneous. This situation will be considered in Part-II of the courses.

\item The case of homogeneous controls of negative order, i.e. $r>2$, requires a different approach to
well-posedness, such as the theory of maximal monotone operators (see, e.g.,
Br\'ezis, 1973). Moreover, this may lead to finite-time stabilization (FTS) (see Part-III of the course).

\end{itemize}

\newpage

\begin{center}
\huge{Part II}

\end{center}

\begin{center}

{\bf  Exponential Stabilization of Bilinear Systems in Banach Spaces}

\end{center}

\vspace{1cm}

{\bf Purpose of the course}

\begin{itemize}
    \item Provide necessary and sufficient conditions for the uniform exponential stabilization of bilinear systems in a Banach state space.
    \item Formulate stabilization assumptions in terms of integral (observation) estimates involving the control operator and the system operator.
        \item Provide an explicit estimate of the convergence rate.
    \item  Present illustrative examples  of bilinear control systems governed by PDEs.

\end{itemize}

\newpage

\section{Introduction}

\underline{\it Motivation}:\\
Many modeling problems naturally arise in non-reflexive state spaces (see e.g. [Rhandi, 1998], [Rhandi and Schnaubelt, 1999], [Fattorini, 1974], [Fattorini, 2001] and [Vieru, 2005]). For instance,

\begin{itemize}
    \item The appropriate state space for age-dependent population dynamics with spatial diffusion is $X = L^1(\Omega)$, since the $L^1$-norm represents the total population size.

    \item For the heat equation, spaces of continuous functions, such as $\mathcal{C}_0(\Omega)$, often provide a more suitable framework. For instance,  in the context of feedback stabilization of the heat equation
    \[
    z_t = z_{xx} + \mathbf{1}_\omega v(x,t),
    \]
    stabilization in the supremum norm is more interesting than in the $L^2$-norm. 

    \item Constraints may also naturally be expressed in non-reflexive settings.

This becomes even more significant when the following control constraints  are imposed:
    \[
    \|v(t)\|_{L^\infty(\Omega)} \leq M, \;  t\ge 0,
    \]
    which is stricter than the usual constraint in the Hilbert space framework $L^2(\Omega)$.
\end{itemize}

\vspace{0.5cm}

\underline{ \it Considered system}:\\
 Consider the following system:
    $$
   {\blue  (BLS)}\quad \frac{dz(t)}{dt} = Az(t) + u(t) B z(t), \quad z(0) = z_0 \in X.
    $$
    \begin{itemize}

    \item $X$ is a  Banach space.
    \item The scalar-valued function $u(t)$ is the control and $z(t)$ (when it exists) is the corresponding mild solution.
    \item The operator $A$ is the infinitesimal generator of a linear $C_0$-semigroup $S(t)$ on $X$.
\end{itemize}

\vspace{0.5cm}

\underline{ \it Some recalls}:

\vspace{0.25cm}

    $\star$  The duality pairing between a space $X$ and its dual $X^*$ is denoted by $\langle \cdot,\cdot \rangle$, where $X^*$ is the set of all bounded linear functionals on $X$.

        \vspace{0.25cm}

        - The pairing between $y \in X$ and $\phi \in X^*$ is denoted by $\langle y, \phi \rangle$.

        \vspace{0.25cm}

       - If $X$ is a Hilbert space, $X^*$ is identified with $X$ and $\langle \cdot,\cdot \rangle$ with the inner product in $X$.

        \vspace{0.25cm}

    $\star$  The duality map $J$ from $X$ to $X^*$ is generally a multi-valued operator; i.e., for each $y \in X$, $J(y)$ is by definition the (nonempty) set of all $\phi \in X^*$ such that
        $$\langle y, \phi \rangle = \|y\|^2 = \|\phi\|^2.$$

    $\star$  An unbounded linear operator $A$ with domain $\mathcal{D}(A)$ is:
        \begin{itemize}
            \item dissipative if for every $y \in \mathcal{D}(A)$ there exists $y^* \in J(y)$ such that $\Re \langle Ay, y^* \rangle \le 0$.\\
Moreover,, if $A$ is the generator of a strongly continuous contraction semigroup, then we have
\begin{equation}
\Re \langle A x, x^* \rangle \le 0
\end{equation}
 for all $x \in \mathcal{D}(A)$ and for an arbitrary $x^* \in J(x)$.

            \item conservative if $\sup_{y^* \in J(y)} \Re \langle Ay, y^* \rangle = 0$,
            \item strongly conservative if $\Re \langle Ay, y^* \rangle = 0$ for all $y^* \in J(y)$.
        \end{itemize}

    $\star$  A $C_0$-semigroup $S(t)$ is:
        \begin{itemize}
            \item a contraction semigroup if $\|S(t)y\| \le \|y\|$ for all $t \ge 0$ and $y \in X$,
            \item a $C_0$-isometric semigroup if $\|S(t)y\| = \|y\|$ for all $t \ge 0$ and $y \in X$,
            \item a $C_0$-isometric group if it is an isometric semigroup and a group.
     \end{itemize}

\vspace{0.5cm}

\underline{ \it Goal}:

$\bullet$  Provide a feedback law such that the closed-loop system admits a unique (mild) solution satisfying
$$
\|z(t)\| \le M e^{-\sigma t} \|z_0\|, \quad \forall t \ge 0,
$$
for some constants $M, \sigma > 0$.\\

$\bullet$  It is well known that a linear system that is null-exactly controllable is also exponentially stabilizable. Moreover, this condition has been shown to be necessary for the exponential stabilization of linear systems (using linear feedback) when the associated semigroup is a group.

$\bullet$ The goal here is to establish similar results for bilinear systems when the associated semigroup is a contraction/isometric semigroup.

\vspace{0.5cm}

\underline{\it Feedback control candidate}:
\\
 In order to formulate the stabilization assumptions in the context of Banach spaces, we formally differentiate $\|x(t)\|^2, $ obtainig at least for contractive semigroup, $$\frac{d}{dt} \|x(t)\|^2 =2\langle \dot{x}(t),f\rangle
$$
$$
\le 2 Re\;\big( u(t) \langle Bx(t),f \rangle\big),\; \forall f\in J(x(t))
$$
In the previous course, we established weak stability and polynomial stabilization in Hilbert spaces using  the family of control laws:
$$
u_r =- \displaystyle\frac{\langle z(t),B(z(t))\rangle }{\|z(t)\|^r},\; \mbox{if} \; z(t)\ne 0\;\ \mbox{and} \; \; u_r=0
\; \mbox{if} \; z(t)= 0,
$$
with $r<2.$\\
Here, we consider the limit case $ r = 2 $. The objective is to prove
 uniform exponential stabilization under a normalized feedback
control involving a gain parameter, which allows one to regulate the
bound of the control input:
$$
u = -\lambda \frac{\langle B(z(t)), J(z(t)) \rangle}{\|z(t)\|^2},
\quad \text{if } z(t) \neq 0,
$$
and
$$
u = 0, \quad \text{if } z(t) = 0,
$$
(where $ \lambda > 0 $ denotes the control gain), under the following assumptions:

\vspace{0.25cm}

$\bullet$ {\it In order to simplify the notation (avoiding the use of complex conjugates in the expression of the feedback control, as well as real parts in may places,..), we assume in the sequel that $ X $ is a {\blue real Banach} space.}

\section{Exponential Stabilization}

\subsection{ Main assumptions }

 ${\blue ({\cal A}_1)}$ The operator  $A$ generates a linear $C_0-$semigroup $S(t)$ of contractions on $X$.

 \vspace{0.25cm}

  ${\blue ({\cal A}_2)}$ The mapping $B_J: y\mapsto \langle B(y),J(y)\rangle$ is single-valued and satisfies
 $$
 |B_J(y)-B_J(z)| \le \mathcal{K}(\|y\|,\|z\|) \|y-z\|,\;\;  \forall y, z \in X,
$$
 where  $\mathcal{K}: \mathbf{R}^{+2} \rightarrow \mathbf{R}^+$  is a function satisfying the   properties:

 \vspace{0.2cm}

 - the function  $\mathcal{K}: \mathbf{R}^{+2} \rightarrow \mathbf{R}^+$ is increasing (in the sense that $0\le s_1\le r_1$ and $0\le s_2\le r_2$
 imply $\mathcal{K}(s_1,s_2)\le \mathcal{K}(r_1,r_2)$),

 \vspace{0.2cm}

 - $\mathcal{K}(s,s)$ is continuous and $\mathcal{K}(s,s)\le C s,\, \forall s\ge 0$ (for some  $C>0$).
 \vspace{0.25cm}

 \begin{enumerate}

\item If $J$ is single-valued and Lipschitz continuous, then the assumption $(\mathcal{A}_2) $ holds. In particular, if $X$ is a Hilbert
space, then $(\mathcal{A}_2) $ holds with $\mathcal{K}(t,s) = c(t + s)$ for some $ c=2\|B\|. $

\item  Note that assumption $(\mathcal{A}_2)$ guarantees the continuity of the mapping\\
$t\mapsto   \langle  BS(t)y,J(S(t)y)\rangle.$ This gives sense to estimate of assumption $(\mathcal{A}_3)$ below.
\end{enumerate}

  ${\blue ({\cal A}_3)}$ The  observation estimate:
 $$
 \int_{0}^{T}|\langle B(S(t)y),J(S(t)y)\rangle|dt\geq    \delta\|S(T)y\|^{2}, \ \forall y\in X\; (T,\delta>0).
$$

\vspace{0.2cm}

$\star$  In the sequel, under the assumption that $B_J$ is single-valued (for example under assumption $(\mathcal{A}_2)$), we denote by $\langle By,J(y)\rangle$ the  value of $\langle By,y^*\rangle$
for any $ y^*\in J(y).$

\vspace{0.2cm}

$\star$  Also, for any functions $t\mapsto \zeta(t)$, we will write $\phi(\cdot)\in J(\zeta(\cdot))$ if $\phi(t)\in J(\zeta(t)),\, \forall
t\ge0.$

\subsection{Necessary conditions }

Let us prove the following lemma:

\begin{lemme}

If the  assumption $({\cal A}_2)$ holds, then the closed-loop operator  $z \mapsto F(z)=\frac{\langle Bz,J(z)\rangle}{\|z\|^2} Bz$ is globally Lipschitz.
\end{lemme}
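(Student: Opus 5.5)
We must show $F(z)=\frac{B_J(z)}{\|z\|^2}Bz$ (with $F(0)=0$) is globally Lipschitz on $X$, using $(\mathcal{A}_2)$ and the boundedness of $B$. The plan mirrors the local Lipschitz argument given for $F_r$ in Part I, now with $r=2$. The key point is that $F$ is $1$-homogeneous and the constant in $(\mathcal{A}_2)$ grows only linearly along the diagonal.

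\textbf{Preliminary bounds.} Since $|B_J(y)|=|\langle By,y^*\rangle|\le \|B\|\|y\|^2$, we have $\|F(y)\|\le \|B\|^2\|y\|$. This immediately gives Lipschitz continuity at the origin: $\|F(y)-F(0)\|\le \|B\|^2\|y-0\|$. Also, by monotonicity of $\mathcal{K}$ and the bound $\mathcal{K}(s,s)\le Cs$, whenever $\|y\|\le\|z\|$ we get $\mathcal{K}(\|y\|,\|z\|)\le \mathcal{K}(\|z\|,\|z\|)\le C\|z\|$.

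\textbf{Main decomposition.} Take $0<\|y\|\le\|z\|$; by symmetry this is no loss of generality. Write
$$F(z)-F(y)=\frac{B_J(z)-B_J(y)}{\|z\|^2}Bz+\frac{B_J(y)}{\|z\|^2}(Bz-By)+B_J(y)\Big(\frac{1}{\|z\|^2}-\frac{1}{\|y\|^2}\Big)By .$$
We bound the three terms separately.
\begin{itemize}
\item[$\star$] The first term is bounded by $C\|z\|\,\|z-y\|\,\|B\|\,\|z\|/\|z\|^2=C\|B\|\,\|z-y\|$.
\item[$\star$] The second is bounded by $\|B\|^2\|y\|^2\|z-y\|/\|z\|^2\le\|B\|^2\|z-y\|$.
\item[$\star$] For the third, use $\big|\|z\|^2-\|y\|^2\big|=(\|z\|+\|y\|)\big|\|z\|-\|y\|\big|\le 2\|z\|\,\|z-y\|$. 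The term is then at most
$$\|B\|^2\|y\|^3\cdot\frac{2\|z\|\,\|z-y\|}{\|z\|^2\|y\|^2}=2\|B\|^2\frac{\|y\|}{\|z\|}\|z-y\|\le 2\|B\|^2\|z-y\|.$$
\end{itemize}
Summing, $\|F(z)-F(y)\|\le (C\|B\|+3\|B\|^2)\|z-y\|$, which is the global Lipschitz estimate.

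\textbf{Main obstacle.} The delicate step is the first term. Here the ordering $\|y\|\le\|z\|$ together with the monotonicity of $\mathcal{K}$ is what lets us replace $\mathcal{K}(\|y\|,\|z\|)$ by $C\|z\|$, which then cancels against $\|z\|^{-2}$. Without the linear growth of $\mathcal{K}$ on the diagonal, one would only obtain a local Lipschitz property.
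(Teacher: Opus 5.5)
Your proof is correct and is essentially the paper's argument: the same three-term telescoping split with the larger norm in the denominator of the first two terms, the same use of the monotonicity of $\mathcal{K}$ and of $\mathcal{K}(s,s)\le Cs$ under the ordering of the norms, and the same constant $L=C\|B\|+3\|B\|^2$ (the paper simply calls the larger-norm point $y$ instead of $z$). Your explicit treatment of the case where one of the points is $0$, via $\|F(y)\|\le \|B\|^2\|y\|$, fills in a detail the paper leaves implicit.
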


{\red Proof.}

Let $y,z\in X-(0)$ such that $\|z\|\le \|y\|.$ We have
$$
F(y)-F(z)=\frac{\langle By,J(y)\rangle}{\|y\|^2} By-\frac{\langle Bz,J(z)\rangle}{\|z\|^2} Bz
$$
$$=\frac{\|z\|^2\langle By,J(y)\rangle By-\|y\|^2\langle Bz,J(z)\rangle Bz}{\|y\|^2\|z\|^2}
$$
$$
=\frac{\|z\|^2\big( \langle By,J(y)\rangle By -\langle Bz,J(z)\rangle Bz\big) +\big(\|z\|^2-\|y\|^2\big) \langle Bz,J(z)\rangle Bz}{\|y\|^2\|z\|^2}
$$
$$=\frac{ \langle By,J(y)\rangle By -\langle Bz,J(z)\rangle Bz}{\|y\|^2} +\frac{\big(\|z\|^2-\|y\|^2\big) \langle Bz,J(z)\rangle Bz}{\|y\|^2\|z\|^2}
$$
Then, using assumption $({\cal A}_2),$ we get
$$
\|F(y)-F(z)\|\le  \frac{ |\langle By,J(y)\rangle - \langle Bz,J(z)\rangle | \|By\|}{\|y\|^2}+
\frac{ | \langle Bz,J(z)\rangle| \|By-Bz\|}{\|y\|^2} +
$$
$$\frac{|\|z\|^2-\|y\|^2| |\langle Bz,J(z)\rangle|\| Bz\|}{\|y\|^2\|z\|^2}
$$
$$
\le \frac{ \|B\| {\cal K}  \big(\|y\|, \|z\| \big)\|y-z\| }{\|y\|}+
\frac{ \|B\|^2 \|z\|^2 \|y-z\|}{\|y\|^2} +
$$
$$\frac{\big ( \|y\|+\|z\|\big)|\|z-y\|| \|B\|^2 \|z\|}{\|y\|^2}
$$
Then, using $\|z\| \le \|y\|$ and, once again, assumption $({\cal A}_2)$, we derive
$$
\|F(y)-F(z)\|\le  C \|B\|  \|y-z\| +\|B\|^2 \|y-z\| +
 2\|B\|^2\|z-y\|
$$
$$
=  \big(C \|B\|   +3\|B\|^2 \big)\|z-y\|
$$
Thus $F$ is $L-$Lipschitz with $L=C \|B\|   +3\|B\|^2.$\\

Next we give necessary conditions for exponential stabilization. Let us consider the feedback law:
\begin{equation} \label{Ch5e(t)}
u(t)=-\lambda \displaystyle\frac{\langle B(z(t)),J(z(t))\rangle }{\|z(t)\|^2},\; \mbox{if} \; z(t)\ne 0\;\ \mbox{and} \; \; u(t)=0
\; \mbox{if} \; z(t)= 0,
\end{equation}
where $\lambda>0$ is the  control gain.

\vspace{0.25cm}

\begin{theoreme}

\begin{itemize}

 \item Let assumption {\blue $({\cal A}_2)$} hold and suppose that the semigroup $S(t)$ is of {\blue isometries}.

 \item   If the control (\ref{Ch5e(t)})  uniformly exponentially stabilizes  (BLS) for some $\lambda>0$, then the observation condition {\blue $({\cal A}_3)$} holds.

\end{itemize}

\end{theoreme}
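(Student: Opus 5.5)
The idea is to compare the closed-loop trajectory with the free trajectory over one time window, and to use isometry of $S(t)$ to turn the exponential decay of $z$ into a lower bound on their distance. Let $y\in X$, $y\neq 0$, and let $z$ be the mild solution of (BLS) closed with (\ref{Ch5e(t)}) and $z(0)=y$. It exists and is unique by the preceding lemma, since the closed-loop operator is globally Lipschitz. By hypothesis $\|z(t)\|\le Me^{-\sigma t}\|y\|$.

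\textbf{Step 1: choice of $T$ and a lower bound on the gap.} Fix $T>0$ with $Me^{-\sigma T}\le \frac12$. Since $S(t)$ is isometric, $\|S(T)y\|=\|y\|$, and hence
$$\|S(T)y-z(T)\|\ge \|y\|-\tfrac12\|y\|=\tfrac12\|y\|=\tfrac12\|S(T)y\|.$$
By the variation of constants formula, and writing $g(s)=|B_J(z(s))|/\|z(s)\|$,
$$S(t)y-z(t)=\lambda\int_0^t \frac{B_J(z(s))}{\|z(s)\|^2}\,S(t-s)Bz(s)\,ds .$$
Taking norms gives $\varphi(t):=\|S(t)y-z(t)\|\le \lambda\|B\|\int_0^t g(s)\,ds$. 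So it suffices to prove
$$\int_0^T g(s)\,ds\le K\int_0^T |B_J(S(s)y)|\,ds\,/\,\|y\|,$$
with $K$ independent of $y$. Combined with the gap bound this yields $({\cal A}_3)$ with $\delta=1/(2\lambda\|B\|K)$.

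\textbf{Step 2: two-sided control of $\|z(s)\|$.} The upper bound $\|z(s)\|\le\|y\|$ follows from contractivity, either by the dissipation inequality or directly from the VCF. The lower bound is the step I expect to be the main obstacle, since the norm need not be differentiable in a Banach space. I would argue with Dini derivatives. The control satisfies $|u|\le\lambda\|B\|$, and $S(h)$ is isometric, so the semigroup property of mild solutions gives
$$\|z(t+h)\|\ge \|S(h)z(t)\|-\Big\|\int_t^{t+h}u\,S(t+h-s)Bz(s)\,ds\Big\|\ge \|z(t)\|-h\lambda\|B\|\sup_{[t,t+h]}\|z\|.$$
By continuity of $z$, the lower right Dini derivative of $\|z\|$ is therefore $\ge-\lambda\|B\|\,\|z(t)\|$. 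A standard comparison argument then gives $\|z(s)\|\ge e^{-\lambda\|B\|s}\|y\|$ for all $s\ge0$. In particular $z(s)\neq0$, so the feedback is given by the nontrivial formula on $[0,T]$.

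\textbf{Step 3: Gronwall comparison.} By $({\cal A}_2)$ and the monotonicity of $\mathcal K$, using $\|z(s)\|,\|S(s)y\|\le\|y\|$,
$$|B_J(z(s))|\le |B_J(S(s)y)|+\mathcal K(\|y\|,\|y\|)\,\varphi(s)\le |B_J(S(s)y)|+C\|y\|\,\varphi(s).$$
Dividing by $\|z(s)\|\ge e^{-\lambda\|B\|T}\|y\|$ on $[0,T]$ gives
$$g(s)\le e^{\lambda\|B\|T}\Big(\frac{|B_J(S(s)y)|}{\|y\|}+C\lambda\|B\|\int_0^s g\Big).$$
Gronwall's lemma applied to $G(s)=\int_0^s g$ then yields
$$\int_0^T g\le e^{\lambda\|B\|T}\exp\!\big(C\lambda\|B\|Te^{\lambda\|B\|T}\big)\int_0^T\frac{|B_J(S(s)y)|}{\|y\|}\,ds .$$
This is the bound required in Step 1 with a constant $K$ independent of $y$. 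Multiplying through by $\|y\|=\|S(T)y\|$ gives $\int_0^T|B_J(S(s)y)|\,ds\ge\delta\|S(T)y\|^2$, and the case $y=0$ is trivial.
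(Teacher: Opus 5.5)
Your proof is correct, and it follows a genuinely different route from the paper's.

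\textbf{How the two routes differ.} The paper gets its lower bound from the energy identity $\frac{d}{dt}\|z(t)\|^2=-2\lambda\,|B_J(z(t))|^2/\|z(t)\|^2$. This requires several ingredients: $A$ conservative (this is where isometry enters), a.e. differentiability of $t\mapsto\|z(t)\|$ for $z_0\in\mathcal{D}(A)$, the duality-map formula for the derivative of the norm, and a final density step. Integrating and using the exponential decay gives $\int_0^T\|F(z)\|\,dt\ge\alpha\|z_0\|$. Gronwall, applied with the globally Lipschitz map $F$ of the preceding lemma, transfers this to the free trajectory. Isometry then turns $\|F(S(t)z_0)\|$ into $\|B\|\,|B_J(S(t)z_0)|/\|z_0\|$.

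You instead read the lower bound directly off the gap $\|S(T)y-z(T)\|\ge\frac12\|y\|$, which exponential decay plus isometry force, and you bound that gap from above by the VCF. This never differentiates the norm, which is the most delicate point in a general (non-reflexive) Banach space. It also needs no density argument, since it works for every $y\in X$ at once.

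Your Step 2 is in fact dispensable. Set $\varphi(s)=\|S(s)y-z(s)\|$. Apply Gronwall to $\|F(z(s))\|\le\|F(S(s)y)\|+L\varphi(s)$, using the lemma's constant $L$ and $\varphi(s)\le\lambda\int_0^s\|F(z)\|$. This gives $\int_0^T\|F(z)\|\le e^{\lambda LT}\int_0^T\|F(S(s)y)\|$ with no lower bound on $\|z\|$ at all.

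\textbf{Two small slips, neither fatal.}
\begin{itemize}
\item In Step 2 the integrand satisfies $|u|\,\|Bz\|\le\lambda\|B\|^2\|z\|$, so the rate should be $\lambda\|B\|^2$, not $\lambda\|B\|$. This only changes constants.
\item The bound $\|z(s)\|\le\|y\|$ does not follow ``directly from the VCF''; the VCF only gives $e^{\lambda\|B\|^2 s}\|y\|$. Deriving it from the dissipation inequality would reintroduce the norm differentiation you otherwise avoid. You do not need it: the hypothesis already gives $\|z(s)\|\le M\|y\|$ with $M\ge 1$, so monotonicity of $\mathcal{K}$ yields $\mathcal{K}(\|z(s)\|,\|S(s)y\|)\le\mathcal{K}(M\|y\|,M\|y\|)\le CM\|y\|$, and the rest of Step 3 goes through unchanged.
\end{itemize}
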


{\red Proof: }

It is clear that, without loss of generality, we may assume $\lambda=1$ in this proof.

Assume that the system  (BLS) closed with $u(t)$ is exponentially stable:
$$\|z(t)\|\le Me^{-\sigma t} \|z_0\|,\forall t\ge 0,\; z_0\in X.
$$

We know from the above lemma, that assumption $({\cal A}_2)$ implies that the function
$$z\mapsto F(z)=\frac{\langle Bz,J(z)\rangle}{\|z\|^2} Bz$$ is globally Lipschitz (with a Lipschitz constant $L$).

{\bf Step 1.} We have
$$
\|F(z(t))\|
$$

$$
\le  \|F(S(t)z_0)\|+  \|F(z(t))-F(S(t)z_0)\|
$$
$$
\le   \|F(S(t)z_0)\|+ L  \|z(t)-S(t)z_0\|
$$
$$
\le  \|F(S(t)z_0)\|+ L  \int_0^t \frac{|\langle Bz(s),J(z(s))\rangle|}{\|z(s)\|^2} \|Bz(s)\| ds
$$
$$
\le   \|F(S(t)z_0)\|+  L   \int_0^t \|F(z(s))\| ds
$$
 Then, Gronwall's  inequality (see e.g. [Corduneanu, 2008]) yields
 $$
\|F(z(t))\| \le   \|F(S(t)z_0)\|+  L   \int_0^t  \|F(S(s)z_0)\| e^{ L (t-s)} ds
$$
$$
\le   \|F(S(t)z_0)\|+  L  e^{ L t} \int_0^t  \|F(S(s)z_0)\|  ds
$$
Integrating over $[0,T]$ with $T>0,$ this gives
$$
\int_0^T \|F(z(s))\| ds\le  \big(1+T L  e^{ L T)}\big) \int_0^T \|F(S(t))z_0\| ds
$$
 Let $z_0 \in D(A)$. Based on the variation of constants formula (VCF),
one may  show that the function $t \mapsto \|z(t)\|^2$ is
Lipschitz continuous (recall that the norm
square $\|\cdot\|^2$ is not necessarily differentiable), which will  show that $\|z(t)\|^2$ is  differentiable for almost every $t$ in $(0,+\infty).$\\
For this end, we will show that $t \mapsto \|z(t)\|$ is Lipschitz on any interval $[0,T], \ T>0.$ \\
For  all  $h, t\in [0,T]$ we have
$$
z(t+h)-z(t)=S(t+h)z_0-S(t)z_0+\int_0^h S(t+h-s) F(z(s)) ds $$
$$+\int_0^t S(t-s)  (F(z(s+h))-F(z(s))  )ds\cdot
$$
We know that  $\|S(t+h)z_0 -S(t)z_0\|\le h \|Az_0\|.$ Then  using  assumptions $(\mathcal{A}_1)-(\mathcal{A}_2),$ we deduce that
$$
\|z(t+h)-z(t)\| \le h \|Az_0\| +h  C_T  \|z_0\| +  L \int_0^t    \|z(s+h)-z(s)\| ds,\; (C_T>0)
$$
which by Gronwall's inequality gives
$$
\|z(t+h)-z(t)\|\le h \:\big (   \|Az_0\| +  C_T  \|z_0\| \big ) e^{ LT}, \;\; \forall (t,h) \in [0,T]^2.
$$
Then taking  the  graph norm:  $\|z\|_{\mathcal{D}(A)}= \big ( \|Az\|^2+ \|z\|^2\big )^{\frac{1}{2}},\; z\in \mathcal{D}(A),$ we get
 the following estimate:
$$
\|z(t+h)-z(t)\|\le M_0 h \|z_0\|_{\mathcal{D}(A)}, \;\; \forall (t,h) \in [0,t_1]^2,
$$
where $ M_0= \big (   1 +  C_T    \big ) e^{ LT}$.
It follows that  $\Vert z(t)\Vert$ is Lipschitz continuous with respect to $t$ on any interval $[0,T]$. Hence, $\|z(t)\|$ is differentiable almost everywhere and the following equality  holds for every $t>0$ (see  \cite{kato1967nonlinear,botsC6,preisC6}):
$$
C(z):=\frac{d}{dt} \|z(t)\|^2+2 \frac{|\langle Bz(t),J(z(t))\rangle|^2}{\|z(t)\|^2}=2\langle Az(t),J(z(t))\rangle
$$
We have
$$
  \sup_{\phi\in J(z(.))} C(z)=\sup_{\phi\in J(z(.))} 2\langle Az(t),J(z(t))\rangle=0
$$
Since $z\mapsto \langle Bz,J(z)\rangle$ is single-valued, so is the function $t\mapsto \langle Az(t),J(z(t))\rangle$.
Hence,  since $A$ is conservative, it follows that
$$
\frac{d}{dt} \|z(t)\|^2+2 \frac{|\langle Bz(t),J(z(t))\rangle|^2}{\|z(t)\|^2}=0
$$
Then we have
$$
\frac{d}{dt} \|z(t)\|^2=
-2 \frac{|\langle Bz(t),J(z(t))\rangle|^2}{\|z(t)\|^2}
$$
$$
 \Rightarrow 2 \int_0^T\frac{|\langle Bz(t),J(z(t))\rangle|^2}{\|z(t)\|^2} dt = \|z_0\|^2-\|z(T)\|^2
$$
Then we have
$$
2 \int_0^T\frac{|\langle Bz(t),J(z(t))\rangle|^2}{\|z(t)\|^2} dt\ge  \big(1-M^2e^{-2\sigma T}\big) \|z_0\|^2
$$
It follows that (recall that $
\|F(z)\|=\frac{|\langle Bz,J(z)\rangle|}{\|z\|^2} \|Bz\|
$)
$$
2 \int_0^T \|F(z(t))\| dt
\ge  \big(1-M^2e^{-2\sigma T}\big) \|z_0\|
$$
and hence  there exist $T, \alpha>0$ such that
$$
\int_0^T \|F(S(t))z_0\| ds \ge \alpha \|z_0\|, \; \forall z_0\in X
$$
The semigroup being of isometries, we have $\|S(t)z_0\|=\|z_0\|,\; \forall t\ge 0.$ Then
$$
 \|F(S(t))z_0\|=\frac{|\langle BS(t)z_0,J(S(t)z_0)\rangle|}{\|S(t)z_0\|^2} \|BS(t)z_0\|
$$
$$
\le \|B\|\frac{|\langle BS(t)z_0,J(S(t)z_0)\rangle|}{\|S(t)z_0\|}
$$
$$
=\|B\|\frac{|\langle BS(t)z_0,J(S(t)z_0)\rangle|}{\|z_0\|}
$$
We conclude that
$$
\int_0^T |\langle BS(t)z_0,J(S(t)z_0)\rangle| dt \ge \alpha \|z_0\|^2
$$
which extends by density to all $z_0\in X.$

\subsection{Sufficient conditions for exponential stabilization: The case where $B_J$ is single-valued  }

Let us consider the feedback law
$$
u(t)=-\lambda \displaystyle\frac{\langle B(z(t)),J(z(t))\rangle }{\|z(t)\|^2},\; \mbox{if} \; z(t)\ne 0\;\ \mbox{and} \; \; u(t)=0
\; \mbox{if} \; z(t)= 0,
$$
where $\lambda>0$ is the  control gain.

\vspace{0.25cm}

\begin{theoreme}

 Let assumptions
$({\cal A}_1) \;\& \;({\cal A}_2) \;\& \;({\cal A}_3) $ hold.

 Then, for   $\lambda>0$ small enough, the control (\ref{Ch5e(t)})  ensures the  uniform exponential stabilization of system  (BLS).

\end{theoreme}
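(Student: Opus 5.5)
The plan mirrors the Hilbert-space strong stabilization proof of Part I (the case $r<2$), now at the limit exponent $r=2$. The Lipschitz lemma above replaces the local Lipschitz arguments, and a dissipation inequality for the a.e.-differentiable function $\|z(t)\|^2$ replaces the inner-product computation.

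\textbf{Step 1: well-posedness and energy.} Well-posedness follows directly from the lemma above: under $({\cal A}_2)$ the closed-loop nonlinearity $-\lambda F$ is globally Lipschitz, so there is a unique global mild solution. For $z_0\in D(A)$ I would repeat the argument of the preceding theorem showing that $t\mapsto\|z(t)\|$ is Lipschitz on compact intervals, so $\|z(t)\|^2$ is differentiable a.e. The formula for $\frac{d}{dt}\|z\|^2$ combined with $({\cal A}_1)$ (dissipativity, $\langle Az,J(z)\rangle\le 0$) then gives
\[
\|z(t)\|^2-\|z(\tau)\|^2\le -2\lambda\int_\tau^t \frac{|\langle Bz(s),J(z(s))\rangle|^2}{\|z(s)\|^2}\,ds ,
\]
and this extends to all $z_0\in X$ by density and continuous dependence. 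In particular $\|z(t)\|$ is nonincreasing. We may assume $z(t)\neq0$ for all $t$, as in Part I.

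\textbf{Step 2: compare the linear observation with the closed-loop dissipation.} By the (VCF),
\[
\|z(s)-S(s)z_0\|\le \lambda\|B\|\int_0^s \frac{|\langle Bz,J(z)\rangle|}{\|z\|}\,d\sigma \le \lambda\|B\|\sqrt{T}\Big(\int_0^T \frac{|\langle Bz,J(z)\rangle|^2}{\|z\|^2}\Big)^{1/2}.
\]
Since $({\cal A}_2)$ with $\mathcal{K}(s,s)\le Cs$ and monotonicity of $\mathcal{K}$ give $|B_J(S(s)z_0)-B_J(z(s))|\le C\|z_0\|\,\|S(s)z_0-z(s)\|$, integrating over $[0,T]$ yields
\[
\int_0^T|B_J(S(s)z_0)|\,ds\le \big(CT^{3/2}\lambda\|B\|\|z_0\|+T^{1/2}\|z_0\|\big)\Big(\int_0^T\frac{|B_J(z)|^2}{\|z\|^2}\Big)^{1/2}.
\]
Here the second term uses Cauchy--Schwarz together with $|B_J(z)|=\|z\|\cdot|B_J(z)|/\|z\|\le\|z_0\|\,|B_J(z)|/\|z\|$. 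By $({\cal A}_3)$, and using $\|z(T)\|\le\|S(T)z_0\|+\|z(T)-S(T)z_0\|$, one gets a lower bound of the left side by $\delta\|z(T)\|^2$ minus an error term controlled by the same dissipation integral.

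\textbf{Step 3: discrete decay.} Squaring the Step 2 estimate and using Step 1 gives an inequality of the form
\[
\delta^2\|z(T)\|^4\le K(\lambda)\,\|z_0\|^2\,\frac{\|z_0\|^2-\|z(T)\|^2}{\lambda},
\]
which is homogeneous of degree $4$; this homogeneity is exactly why $r=2$ produces exponential rather than polynomial decay. Writing $s_k=\|z(kT)\|^2$ and using the semigroup property of the closed loop, I expect to reach
\[
s_{k+1}^2\le c\,s_k(s_k-s_{k+1}).
\]
Then, with $\rho=s_{k+1}/s_k$, we have $\rho^2\le c(1-\rho)$, so $\rho\le\rho_0<1$ uniformly in $k$. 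This gives $s_k\le\rho_0^k s_0$, and monotonicity of $\|z(t)\|$ yields $\|z(t)\|\le Me^{-\sigma t}\|z_0\|$.

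\textbf{Main obstacle.} The hardest step is Step 2: handling the error term $\|z(T)-S(T)z_0\|$ and the factor $\lambda$ so that the constant $c$ stays finite. The bound $\|z-S z_0\|=O(\lambda)\times(\text{dissipation})^{1/2}$, while the dissipation is itself only $O(1/\lambda)\cdot\|z_0\|^2$. The product is then of order $\sqrt{\lambda}\,\|z_0\|$, which is small only when $\lambda$ is small; this is where the hypothesis ``$\lambda$ small enough'' is used, namely to absorb the error term into $\delta\|z(T)\|^2$. I would first try to keep the $\lambda$-dependence explicit throughout, and then choose $\lambda$ so that $\lambda C\|B\|T^{3/2}$ is dominated by $\delta/2$.
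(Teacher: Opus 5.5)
\paragraph{Verdict.} Your proof is correct. It departs from the paper's at the one step that matters: how you control the deviation $z(T)-S(T)z_0$ when passing from the observed quantity $\|S(T)z_0\|$ to $\|z(T)\|$.

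\paragraph{The paper's route.} The paper bounds the deviation crudely, $\|z(T)\|\le\|S(T)z_0\|+K_\lambda\|z_0\|$ with $K_\lambda\approx\lambda\|B\|^2T$, using $\|F(z)\|\le\|B\|^2\|z\|$ and Gronwall. It combines this with
\[
\|z_0\|^2\big(\|z_0\|^2-\|z(T)\|^2\big)\ge\tfrac{2\lambda\delta^2}{C_1^2}\|S(T)z_0\|^4
\]
and obtains the contraction factor $\zeta=\frac{1+c\lambda K_\lambda}{1+c\lambda}$. This factor is $<1$ only because $K_\lambda<1$, and that is the sole place where the smallness of $\lambda$ is used.

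\paragraph{Your route.} You bound the deviation by the dissipation itself: $\|z(T)-S(T)z_0\|\le\lambda\|B\|\sqrt{T}\,D^{1/2}$, with $D=\int_0^T|B_J(z)|^2/\|z\|^2$. Then, for instance via $\|S(T)z_0\|^2\ge\|z(T)\|^2-2\|z(T)\|\,\|z(T)-S(T)z_0\|$ and $\|z(T)\|\le\|z_0\|$, every term has the form $\mathrm{const}\cdot\|z_0\|D^{1/2}$. After squaring and using $2\lambda D\le\|z_0\|^2-\|z(T)\|^2$, you get $s_{k+1}^2\le c\,s_k(s_k-s_{k+1})$. This inequality is homogeneous of degree four, and $c=c(\lambda)$ does not depend on $z_0$ or $k$.

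\paragraph{What your route buys.} The polynomial $\rho^2+c\rho-c$ equals $1>0$ at $\rho=1$ for every $c>0$. So your bound $\rho\le\rho_0(c)<1$ holds for every fixed gain $\lambda>0$. The decay rate degenerates as $\lambda\to0$ and as $\lambda\to\infty$, but uniform exponential stability holds for all gains. This is stronger than the statement, and it does not need the exact observability invoked in the remark that follows the next theorem.

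\paragraph{Your \emph{main obstacle} paragraph misdiagnoses the proof.} The constant $c$ is finite for each $\lambda$, and nothing has to be absorbed. The condition you propose on $\lambda C\|B\|T^{3/2}$ concerns a term that only enters the multiplicative constant of the comparison estimate, which never needs to be small. The alternative you sketch there is a valid variant: use Step 1 to bound the deviation by $\|B\|\sqrt{\lambda T/2}\,\|z_0\|$, then absorb it. That variant is essentially the paper's mechanism with a slightly better $K_\lambda$. It is the version that genuinely needs a small gain, and the relevant condition is $\|B\|\sqrt{\lambda T/2}<1$.
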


{\red Proof:}

 According to  the above lemma, we have that under assumption  $(\mathcal{A}_2)$, the closed-loop operator generated by the control (\ref{Ch5e(t)})  is globally Lipschitz.
\\ Consequently, there exists  a unique global mild solution $z(t)$ for the system in closed-loop, which is given for any $z_0\in X$ and for all $t\ge 0$ by the variation of constants formula (VCF):
  $$
z(t)=S(t)z_0-\lambda \int_0^t \frac{\langle Bz(s),J(z(s))\rangle }{\|z(s)\|^2} S(t-s) Bz(s) ds\cdot
$$
 Observing that $z_0=0$ implies $z(t)=0$ for all $t\ge 0, $
we can assume in the sequel that $z(t)$ does not vanish.
\\ Moreover, using an argument of density as in the theorem of necessity,  one can also show that, for $y_0\in X$. Indeed, similarly to the case $\lambda=1,$ we can show that  $\Vert z(t)\Vert$ is Lipschitz continuous with respect to $t$ on any interval $[0,T], T>0$. Hence, $\|z(t)\|$ is differentiable almost everywhere and the following equality  holds  (see  \cite{kato1967nonlinear,botsC6,preisC6}):
\begin{equation}\label{Ch5dt}
\frac{d}{dt} \|z(t)\|^2=2\mathcal{R}e \big (\left<\dot{z}(t),\phi\right > \big ),\, a.e. \; t>0,\; \forall \phi \in J(z(t)),
\end{equation}
 According to assumption $(\mathcal{A}_3)$,  the mapping $t\mapsto \mathcal{R}e \big ( \langle  F(z(t)),\phi(t)\rangle \big )$
is continuous for all $\phi(\cdot)\in J(z(\cdot)).$  Thus we can integrate (\ref{Ch5dt}) to get (since $A$ is dissipative)
$$
 \|z(t)\|^2 -  \|z(s)\|^2\le
- 2  \lambda  \int_s^t    \big(  \langle F(z(\tau)),\phi(\tau)\rangle \big ) d\tau, \;  \forall 0 \le s \le t,
 $$
 for all $\phi(\cdot)\in J(z(\cdot)),$ which we may also write as follows.
\begin{equation}\label{Ch5Ineq*}
 \|z(t)\|^2 -  \|z(s)\|^2\le
- 2  \lambda \int_s^t    \big(  \langle F(z(\tau)),J(z(\tau))\rangle \big ) d\tau, \; 0\le s \le t.
\end{equation}
Moreover, it follows from  assumption $(\mathcal{A}_3), $  together with the density of $\mathcal{D}(A)$ in $X$, that   the inequality (\ref{Ch5Ineq*})  extends to all $z_0\in X$.

\begin{equation}\label{Ch5zszt}
 \|z(t)\|^2 -  \|z(s)\|^2\le
- 2 \lambda \int_s^t    \frac{|\langle Bz(\tau),J(z(\tau))\rangle|^2}{\|z(\tau)\|^2}  d\tau, \;   0 \le s \le t.
\end{equation}
Based on the (VCF), we deduce via Gronwall's inequality
$$
\|z(t)\|\le  \|z_0\| e^{\lambda K^2 t},\; \forall t\ge 0 \; (K:=\|B\|).
$$
Now, from the  variation of constants formula (VCF),  we derive   that for all $t\ge T,$ we have
  $$  \|z(t)\| \le \|S(T)z_0\|+ \lambda K^2 \int_0^t  \|z(s)\| ds  $$
  $$
  \le \|S(T)z_0\|+ \lambda K^2 \|z_0\| \int_0^t  e^{\lambda K^2 s} ds
  $$
  $$
  \le \|S(T)z_0\|+  \|z_0\| \big ( e^{\lambda K^2 t}-1\big) ds,
  $$
  which gives for $t=T$
  $$\|z(T)\| \le   \|S(T)z_0\| +  K_\lambda  \|z_0\|.
  $$
  $ K_\lambda\sim \lambda K^2 T=o(1),$ as $\lambda\to 0^+.$
\\
Thus for all $j\ge0,$ we have
\begin{equation}\label{Ch5zS(T)}
  \|z((j+1)T)\| \le    \|S(T)z(jT)\|^2 + K_\lambda \|z(jT)\|\cdot
  \end{equation}
We have used the following estimations:
$$ \|S(t)z_0\|\le \|z_0\| \; \; \mbox{and}  \; \; \|z(t)\| \le \|z_0\| $$
which imply that
$$
  \mathcal{K}(\|S(t)z_0\|,\|z(t)\|) \le \mathcal{K}(\|z_0\|,\|z_0\|)
\le C \|z_0\|,
$$
where $C$ is the constant defined in assumption $(\mathcal{A}_2)$.\\
It follows  from assumption $(\mathcal{A}_2)$  that for all $t\in [0,T], $
$$
 | \left<BS(t)z_0,J(S(t)z_0)\right>  - \left<Bz(t),J(z(t))\right>  | \le C \|z_0\| \; \|z(t)-S(t)z_0\|,
$$
and hence
$$
 | \left<BS(t)z_0,J(S(t)z_0)\right>|\le    \left<Bz(t),J(z(t))\right>  | + C \|z_0\| \; \|z(t)-S(t)z_0\|
$$
$$
\le  \frac{|\langle Bz(t),J(z(t))\rangle |}{\|z(t)\|} \|z_0\|+
 C  \lambda \|B\| \|z_0\|
   \displaystyle\int_0^T \frac{|\langle Bz(t),J(z(t))\rangle |}{\|z(t)\|}  dt.
   $$
Integrating the last inequality and using again that  $\|z(t)\|$ is  non-increasing, we get via H$\ddot{o}$lder's inequality
$$
\displaystyle\int_0^T | \left<BS(t)z_0,J(S(t)z_0)\right>  |  dt \le C_\lambda \|z_0\|
   \big (\displaystyle\int_0^T \frac{|\langle Bz(t),J(z(t))\rangle |^2}{\|z(t)\|^2}  dt\big)^{1/2}
$$
where $ C_\lambda=\big ( 1 +  C \|B\| \lambda\big ) \sqrt {T}\le C_1$ for $\lambda\in (0,1).$

In the sequel, we suppose that $\lambda\in (0,1).$\\
Replacing $z_0$ by $z(jT), j\ge 0$ in the last inequality and using the observation estimate, we deduce that
$$
C_\lambda^2 \|z(jT)\|^2 \displaystyle\int_{jT}^{(j+1)T}
 \frac{|\langle f(z(t),u_0),J(z(t))\rangle |^2}{\|z(t)\|^2}  dt
\ge \delta^2 \|S(T)z(jT)\|^4,\; \forall j \ge0\cdot
$$
This together with $$
 \|z(t)\|^2 -  \|z(s)\|^2\le
- 2 \lambda \int_s^t    \frac{|\langle Bz(\tau),J(z(\tau))\rangle|^2}{\|z(\tau)\|^2}  d\tau, \;   0 \le s \le t,
$$
 gives
$$
  \|z(jT)\|^4-\|z((j+1)T)\|^2\|z(jT)\|^2\ge  \frac{2  \lambda \delta^2}{C_1^2} \|S(T)z(jT)\|^4\cdot
$$
Hence using the following estimate (see inequality (\ref{Ch5zS(T)})):
 $$ \|z((j+1)T)\| \le    \|S(T)z(jT)\|^2 + K_\lambda \|z(jT)\|, $$
 and the fact that
 $$\|z((j+1)T)\| \le \|z(jT)\|,$$ it comes
$$
\begin{array}{ccc}
  \|z(jT)\|^4-\|z((j+1)T)\|^4&\ge & \frac{2  \lambda \delta^2}{C_1^2} \|S(T)z(jT)\|^4 \\
   & \ge & \frac{  \lambda \delta^2}{4C_1^2}  \bigg (\|z((j+1)T)\|^4 -K_\lambda \|z(jT)\|^4\bigg )
\end{array}
$$
Hence $$ \|z((j+1)T)\|^4\le \zeta \|z(jT)\|^4,$$
where $\zeta:=\frac{1+\frac{  \lambda \delta^2}{4C_1^2} K_\lambda}{1+\frac{  \lambda \delta^2}{4C_1^2} } {\blue \in (0,1)}$ for $\lambda>0$ sufficiently small (recall that $K_\lambda=o(1), $ as $\lambda\to 0^+$).\\
Thus
$$\|z(jT)\|\le \zeta^{\frac{j}{4}}\|z_0\|,\; j\ge 0,$$
  gives (recall that  $\|z(t)\|$  is decreasing)
  $$\|z(t)\|\le \|z(jT)\|, \; j=E(t/T).$$
This gives   the estimate
$$
\|z(t)\|\le M e^{-\sigma t} \|z_0\|,\; t\ge 0
$$
with $ \sigma = \frac{-\ln \zeta}{4T}$ and $M=\zeta^{-1/4}.$\\

\begin{remarque}
   Note that the above results still are valid in the context of complex Banach space using the conjugate of the above normalized feedback control.
\end{remarque}

\subsection{ Sufficient conditions for exponential stabilization: The case where $B_J$ is not single-valued }

In case where  $B_J$ is not single-valued, one may replace assumption $({\cal A}_2)$ by the following one:

\medskip
\noindent
$({\cal A}_2)'$
There exists a linear bounded operator ${\cal B} : X \to X$ such that
${\cal B}_{J}$ satisfies $({\cal A}_2)$ and
$
B \ge {\cal B}, $  in the sense that
 $$
\langle B y, J(y) \rangle \ge \langle {\cal B} y, J(y) \rangle,
\qquad \forall y \in X.
$$
In other words,
$$
\langle B y, y^* \rangle \ge \langle {\cal B} y, y^* \rangle \ge 0,
\qquad \forall y \in X, \ \forall y^* \in J(y).
$$
 Notice that the above positivity condition may be replaced by the condition that the two terms of the first inequality  have the same sign, thereby recovering the above result when $B_{J}$ is single-valued.

\vspace{0.25cm}

Using the same arguments as above, we obtain the following result:

\medskip

\begin{theoreme}

Assume that $({\cal A}_1)$, $({\cal A}_2)'$, and $({\cal A}_3)$ hold,
with $B$ replaced by ${\cal B}$.\\
Consider the control law
$$
u=-\lambda \frac{\langle {\cal B} z(t), J(z(t)) \rangle}{\|z(t)\|^2},
\qquad \text{if } z(t) \neq 0,
$$
and
$$
u=0, \quad \text{if } z(t)=0,
$$
where $\lambda>0$ is the control gain.\\
Then, for $\lambda>0$ small enough, the control $u$ ensures the
 uniform exponential stabilization of system~(BLS).

 \end{theoreme}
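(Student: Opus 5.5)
The plan is to repeat the proof of the previous theorem, with the role of $B$ in the feedback and observation played by ${\cal B}$. Meanwhile $B$ itself enters only through the dynamics and through the one-sided inequality $\langle By,y^*\rangle\ge\langle {\cal B}y,y^*\rangle\ge 0$.

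\textbf{Well-posedness.} The closed-loop nonlinearity is
$$
G(z)=-\lambda\frac{\langle {\cal B}z,J(z)\rangle}{\|z\|^2}\,Bz .
$$
Since ${\cal B}_J$ satisfies $({\cal A}_2)$, the computation in the Lemma goes through verbatim, with ${\cal K}$ attached to ${\cal B}_J$ and $\|B\|$ appearing in the remaining factors. Hence $G$ is globally Lipschitz with constant $\lambda(C\|B\|+3\|B\|\|{\cal B}\|)$. This gives a unique global mild solution given by (VCF). As before, $\|z(t)\|\le e^{\lambda\|B\|\|{\cal B}\|t}\|z_0\|$. Moreover, for $z_0\in\mathcal D(A)$, $t\mapsto\|z(t)\|$ is Lipschitz on bounded intervals, so (\ref{Ch5dt}) holds a.e.

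\textbf{Energy inequality.} For any $\phi\in J(z(t))$, dissipativity of $A$ gives
$$
\frac{d}{dt}\|z\|^2\le -2\lambda\frac{\langle{\cal B}z,J(z)\rangle}{\|z\|^2}\langle Bz,\phi\rangle .
$$
This is the key point, and I expect it to be the main obstacle. Since $\langle Bz,\phi\rangle\ge\langle{\cal B}z,\phi\rangle=\langle{\cal B}z,J(z)\rangle\ge0$ for every $\phi$, the right-hand side is bounded by $-2\lambda|\langle{\cal B}z,J(z)\rangle|^2/\|z\|^2$, independently of the choice of $\phi$. Because ${\cal B}_J$ is continuous along trajectories, we can integrate and then pass to all $z_0\in X$ by density. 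This yields the analogue of (\ref{Ch5zszt}) with ${\cal B}$ in place of $B$, together with the monotonicity of $\|z(t)\|$. The care needed is that the derivative of $\|z\|^2$ only exists a.e. and that the multivaluedness of $B_J$ must not spoil the bound; the sign condition is exactly what handles this.

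\textbf{Comparison and iteration.} I will estimate $\|z(t)-S(t)z_0\|$ on $[0,T]$ via (VCF). Using $\|z\|\le\|z_0\|$, this gives
$$
\|z(t)-S(t)z_0\|\le\lambda\|B\|\int_0^T\frac{|\langle{\cal B}z,J(z)\rangle|}{\|z\|}\,ds ,
$$
and also $\|z(T)\|\le\|S(T)z_0\|+K_\lambda\|z_0\|$ with $K_\lambda=o(1)$ as $\lambda\to0^+$. Applying $({\cal A}_2)$ to ${\cal B}_J$ with $\mathcal K(\|z_0\|,\|z_0\|)\le C\|z_0\|$, then integrating and using Hölder, I obtain
$$
\int_0^T|\langle{\cal B}S(t)z_0,J(S(t)z_0)\rangle|\,dt\le C_1\|z_0\|\Big(\int_0^T\frac{|\langle{\cal B}z,J(z)\rangle|^2}{\|z\|^2}\,dt\Big)^{1/2}
$$
for $\lambda\in(0,1)$. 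Combining this with $({\cal A}_3)$ for ${\cal B}$ and with the energy inequality, applied on $[jT,(j+1)T]$ by the semigroup property of the solution, gives
$$
\|z(jT)\|^4-\|z((j+1)T)\|^4\ge\frac{2\lambda\delta^2}{C_1^2}\|S(T)z(jT)\|^4 .
$$
Inserting the comparison bound then gives $\|z((j+1)T)\|^4\le\zeta\|z(jT)\|^4$ with $\zeta\in(0,1)$ for small $\lambda$. Finally, monotonicity of $\|z(t)\|$ yields $\|z(t)\|\le Me^{-\sigma t}\|z_0\|$.
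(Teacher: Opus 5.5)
Your proposal is correct and follows essentially the same route as the paper's sketch. You prove global Lipschitz continuity of $z\mapsto \frac{\langle {\cal B}z,J(z)\rangle}{\|z\|^2}Bz$ by the Lemma's computation, obtain the dissipation bound from $\langle Bz,\phi\rangle\ge\langle {\cal B}z,J(z)\rangle\ge 0$ for every $\phi\in J(z)$, and then run the comparison with $S(t)z_0$ via $({\cal A}_2)$ for ${\cal B}_J$ together with $({\cal A}_3)$ for ${\cal B}$ and the iteration on $[jT,(j+1)T]$, with somewhat more detail than the paper gives.
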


{\red Idea of the proof:}

Following the same method as for the lemma, we can see that the new closed operator $$
z\mapsto \frac{\langle {\cal B}z,J(z) \rangle}{\|z\|^2} Bz
$$
is Lipschitz.\\
Moreover, the estimate (\ref{Ch5zszt}) becomes
$$
 \|z(t)\|^2 -  \|z(s)\|^2\le
- 2 \lambda \int_s^t    \frac{\langle {\cal B} z(\tau),J(z(\tau))\rangle\; \langle Bz(\tau),J(z(\tau))\rangle}{\|z(\tau)\|^2}  d\tau, \;   0 \le s \le t.
$$
$$
\le
- 2 \lambda \int_s^t    \frac{|\langle {\cal B} z(\tau),J(z(\tau))\rangle|^2}{\|z(\tau)\|^2}  d\tau, \;   0 \le s \le t.
$$
It follows  from assumption $(\mathcal{A}'_2)$  that for all $t\ge0, $
$$
 | \left<{\cal B}S(t)z_0,J(S(t)z_0)\right>|\le   C \|z_0\| \; \|z(t)-S(t)z_0\| +  | \left<{\cal B} z(t),J(z(t))\right>  |,
$$
where $C$ is the constant defined in assumption $(\mathcal{A}'_2)$.

Then we proceed as in the above theorem.

\begin{remarque}

\begin{enumerate}

\item Note that in assumptions $({\cal A}_2)$ and $({\cal A}_3)$, one can replace $J$ with a subset $J_1 \subset J$, provided that it is the same in both assumptions.

\item In the case of strong observation inequality (exact controllability),  we can prove that the UES hold for any $\lambda>0$.

\item The contraction of the semigroup  is not necessary. For instance, consider
$A = \mu I$, $\mu > 0$, $B = I$, and the above normalized feedback law with $\lambda > \mu.$
However, the stabilization results still hold for some classes of unstable bilinear abstract systems (see [Chen \& Tsao, 2000] and [Ouzahra, 2021]).

\item In the finite-dimensional case (e.g., $X=R^n$), the contraction condition may be assumed to hold with
respect to another inner product
$$
\langle x, y \rangle_{P} := \langle P x, y \rangle,
$$
where the matrix $P = P^{T}>0$ satisfies the LMI: $PA+A^T P\le 0$.

\end{enumerate}
\end{remarque}

\begin{exemple}

Let $\Omega$ denote a bounded open subset of $\mathbf{R}^{n}$ with smooth boundary $\partial\Omega$, and let $Q = \Omega \times (0,+\infty)$.\\
Consider the following wave equation:
\begin{equation}\label{Ch1wave1}
\left\{
\begin{array}{ll}
\displaystyle \frac{\partial^2 z(x,t)}{\partial t^2} = \Delta z(x,t) + v(t) a(x) \frac{\partial z(x,t)}{\partial t}, & \text{in } Q, \\
\\
z = 0, & \text{on } \partial \Omega \times (0,+\infty),
\end{array}
\right.
\end{equation}
where $a \in L^{\infty}(\Omega)$ satisfies $a(x) \ge 0$ a.e. in $\Omega$, and $a(x) \ge c > 0$ on a non-empty open subset $\omega \subset \Omega$.\\
It is well known that the observability estimate holds if $\omega$ satisfies the Geometric Control Condition (GCC), which we assume throughout this example.\\
This system can be written in the abstract form of equation~ (BLS) by setting
$$
H = H_0^1(\Omega) \times L^2(\Omega), \quad
\langle (y_1, z_1), (y_2, z_2) \rangle = \langle y_1, y_2 \rangle_{H_0^1(\Omega)} + \langle z_1, z_2 \rangle_{L^2(\Omega)},
$$
and
$$
A = \begin{pmatrix} 0 & I \\ A_1 & 0 \end{pmatrix}, \quad
B = \begin{pmatrix} 0 & 0 \\ 0 & a(\cdot) \end{pmatrix},
$$
where $A_1 = \Delta$ is the Laplacian operator with domain $D(A) = H^2(\Omega) \cap H_0^1(\Omega)$.\\
Here, $A$ is skew-adjoint and $B$ is linear and bounded.\\
We deduce that the feedback control
$$
u(t) = - \lambda \frac{\displaystyle \int_\Omega  |  z_t(x,t)  |^2 dx}
{\|\big( z(\cdot,t),z_t(\cdot,t)\big)\|^2} \mathbf{1}_{\{(z(t), \dot{z}(t)) \neq (0,0)\}}
$$
($\lambda>0$) exponentially stabilizes system~(\ref{Ch1wave1}).

\begin{remarque}
  In the above example of the undamped wave equation, the Geometric Control Condition (GCC) is assumed to be satisfied to ensure that the exact (strong) observability condition holds. In the absence of the (GCC), only a logarithmic asymptotic estimate of the state is derived with $v_0$,  under a weaker observability condition (see Ammari and Ouzahra, 2021).

\end{remarque}

\end{exemple}

\begin{exemple}
Let   $\Omega =\left( 0,+\infty\right)$ and let us consider the following system
\begin{equation}\label{Ch5eqution transport}
\left\{
\begin{array}{ll}
  y_t(\cdot,t) =   -y_{x}(\cdot,t)+  u(t) h(x) y(\cdot,t),& \mbox{in} \; \Omega\times (0,+\infty) \\
  y(0,t)=0, & \mbox{in} \, (0,+\infty)\\
  y(\cdot,0)=  y_0\in L^1(\Omega), & \mbox{in} \;\Omega
\end{array}
\right.
\end{equation}
Here, $X = L^1(\Omega)$ is the state space, the parameter $u(t)$ represents the control,
and the corresponding solution $z(t) := y(\cdot,t) \in X$ is the state.\\
Moreover, we assume that $h \in L^\infty(\Omega)$, and we define the control operator by
$$
By = h(x)\,y, \quad \forall u \in U := \mathbb{R}, \ \forall y \in X.
$$
The duality map is a multivalued function and for all $y\in X$, we have
$$J(y)=\{\xi\in L^\infty(\Omega) : \; \xi(x)\in  \,\, \mbox{sign}(y(x)) \cdot \|y\|\},$$
 where  the sign function is defined by  sign $(s)=
 \left\{
   \begin{array}{cc}
     1, & s>0 \\
    I, & s=0 \\
     -1, & s<0
   \end{array}
 \right. \;\;$  with $I=[-1,1]$.
\\
 The unbounded operator
 $$A=-\frac{\partial }{\partial x}  \;\mbox{with domain}\; D(A) =\left\{ y\in W^{1,1}\left( \Omega \right); \; y\left( 0\right)
=0\right\} $$
  generates an isometric semigroup $S(t)$ on  $X, $ which is defined  for all $y\in L^1(\Omega)$ by
$$
S(t) y(\xi)=\left\{
  \begin{array}{ll}
    y(\xi-t), & \mbox{if} \; \xi-t\ge 0 \\
\\
    0, & \mbox{else.}
  \end{array}
\right.
$$
Hence,  a necessary condition for uniform exponential stabilization  with the control (\ref{Ch5e(t)}) is
given by
$$
\int_0^T \int_\Omega |h(x)| |(S(t)y)(x)|   dx  dt  \ge \delta \|y\|, \; (T, \delta>0).
$$

This last inequality  holds if in particular  $h\ge c>0$ but is not necessary.\\

Let us assume in the sequel that $h={\bf 1}_\omega$ with $\omega=(\alpha,+\infty),\; \alpha>0.$\\
Then, for $ y\in X$ and $y^*\in J(y)$, we have
$$
\langle By,y^*\rangle = \|y\|  \int_\alpha^{+\infty} |y(x)| dx
$$
so $({\cal A}_2)$ holds (in particular $B_J$ is single-valued) for $\mathcal{K}(t,s)=2  (t+s)$ and $C=4.$\\
Moreover, for every $t\ge 0, $  we have
 $$\begin{array}{ccc}
    \langle  BS(t)y,J(S(t)y)\rangle
     &= & \|S(t)y\| \int_\alpha^{+\infty}  |\big(S(t)y\big)(x)| dx.
     \end{array}
$$
It follows that for any $T>\alpha$
  $$\begin{array}{ccc}
     \int_0^T\langle  BS(t)y,J(S(t)y)\rangle dt
& \ge & \int_\alpha^T \|S(t)y\| \int_t^{+\infty}  |y(x-t)| dx\\
\\
& = &c \|y\|^2,\; c=T-\alpha.
  \end{array}
$$
Hence the system (\ref{Ch5eqution transport}) is uniformly exponentially
stabilizable with  the  control
$$u(t)=-\lambda \frac{\int_\alpha^\infty |y(x,t)|dt }{\int_0^\infty |y(x,t)|dt} {\bf 1}_{\{ y(\cdot,t)\ne0\}}$$ for $\lambda>0$ {\blue small enough}.

\end{exemple}

\begin{exemple}
Consider the following system:
\begin{equation}  \label{heat1} \left\{%
	\begin{array}{ll} x_{t}(\cdot,t)= x_{\zeta\zeta}(\cdot,t)+u(t) a(.)x(.,t)   &\\
	x'(0,t)=x'(1,t)=0 &\\
	x(\cdot,0)=  x_0\in \mathcal{ C}_0([0,1]) &
	\end{array}
	\right.
\end{equation}
where

$\star$ the state space is $X=\mathcal{C}_0([0,1])$, equipped with the supremum norm,

$\star$ the scalar valued function $t\mapsto u(t)\in \mathbf{R}$ represents  the control and $x(t)=x(\cdot,t)$ is the corresponding state,

$\star$ the function $a$ is such that  $a\in \mathcal{ C}_0(0,1)$    with $a(\zeta)\geq k>0,\; \forall \zeta\in [0,1]. $ \\

$\star$ Here, the  system operator $A$ is given by $Ax=x_{\zeta\zeta},$ $\forall x\in \mathcal{ D}(A)=\{x\in \mathcal{C}^2([0,1]): \; x'(0)=x'(1)=0\}$   and generates a contractions $C_0-$semigroup
$S(t)$ in $X:=\mathcal{C}_0([0,1])$.\\

Here, we can take:

$\star$ The sub-mapping $J_1: x \mapsto J_1(x)$ of $J$
$$ J_1(x) := \{\varphi= x(s_0) \delta_{s_0}: \;  s_0\in [0,1] \; \mbox{is s.t } \;   |x(s_0)|=\|x\|\},$$   where $\delta_{s_0}$ is any point measure  supported by a point $s_0$ where $|x|$ reaches its maximum.\\
$\star$ For all $x\in X,$ we have
\begin{eqnarray*}
\langle B x,J_1(x)\rangle&=& \langle ax,J_1(x)\rangle\\
&=&\{\langle a x,x(s_0)\delta_{s_0}\rangle/\;s_0\in [0,1] \; \mbox{is s.t } \;   |x(s_0)|=\|x\|\}
\\&=&\{a(s_0)\|x\|^2/\; s_0\in [0,1] \; \mbox{is s.t } \;   |x(s_0)|=\|x\|\}.
\end{eqnarray*}
Here $\langle B x,J_1(x)\rangle$ is a multi-valued function, but we observe that
$$
\langle B x,J_1(x)\rangle\ge  k \|x\|^2.
$$
Thus we can take  the single-valued operator: ${\cal B}=k\:I\!d.$\\
$\star$ For $T>0$ and $x_0\in D(A)$, we have
$$\displaystyle\int_{0}^{T}\left\langle {\cal B} S(t)x_0,J_1(S(t)x_0)\right\rangle dt\geq k \int_{0}^{T}\|S(t)x_0\|^2dt$$
which gives (since $\|S(t)x_0\|$ decreases)
$$\displaystyle\int_{0}^{T}\left\langle {\cal B} S(t)x_0,J_1(S(t)x_0)\right\rangle dt\geq k T\|S(T)x_0\|^2.$$
This gives $({\cal A}_3).$\\
$\star$  Finally, for all $x, z\in X_1$ we have
$$\begin{array}{llll}
	|\langle {\cal B} x,J_1(x) \rangle-\langle Bz,J_1(z)\rangle | &\leq& \|a\|_\infty \big|\|x\|^2-\|z\|^2\big| \\
\\
&\leq& \|a\|_\infty \big( \|x\|+\|z\|\big ) \|x-z\|.&
\end{array}
$$
Thus $({\cal A}_2)$ holds for ${\cal K}(t,s)=\|a\|_\infty \big (t+s\big).$
\\
$\star$ We conclude that the feedback control
$$
u(t) = -\lambda \, a(s_0)\, \mathbf{1}_{\{x(\cdot,t)\neq 0\}},
$$
where $|x(s_0,t)|=\|x(\cdot,t)\|_\infty$, yields an exponentially stable
closed-loop system for a sufficiently small gain $\lambda>0$, with respect
to the supremum norm of $\mathcal{C}_0([0,1])$.

\end{exemple}

\newpage

\begin{center}
\huge{Part III}

\end{center}

\begin{center}

{\bf  Stabilization in Finite-Time of Bilinear Systems
 }

\end{center}

\vspace{1cm}

{\bf Outline}

\begin{itemize}
    \item Introduction

    \item Finite-time stabilization of bilinear systems (multiplicative control):

    $\star$ A necessary condition for finite-time stabilization (FTS)

        $\star$  Sufficient conditions for FTS

    \item Fixed-time (FxTS) and prescribed (PrTS) stability

    \item Stabilization in finite-time of linear systems (additive control)

    \item Applications
\end{itemize}

\newpage

\section{Introduction}

 The concept of stability (introduced by A.\ M.\ Lyapunov in 1892) is one of the central notions in control theory and describes the system's response to small perturbations of the initial conditions. The notion of asymptotic stability in control theory implies convergence of the system trajectories to an  equilibrium state over an {\color{blue} infinite horizon}. In many applications, it is desirable that the system trajectories converge to the equilibrium state in {\color{blue} finite time}, rather than only as $t \to +\infty$.

 The notion of finite-time stability  was first introduced in the finite-dimensional setting by Roxin, (1966).  Earlier contributions appeared in the Russian literature between 1951 and 1954 (see, e.g.,  Erugin (1951),  Kamenkov (1953), and Lebedev (1954)). This concept has since been extensively developed by many researchers in both finite- and infinite-dimensional settings:

\vspace{0.25cm}

$\star$ Finite-stability (without controls): [Bhat, S. P., $\&$ Bernstein, D. S. (2000), (2005)], [Dorato, P. (2006)], [Moulay, E., \& Perruquetti,  (2006)]...

\vspace{0.2cm}

$\star$ Finite-time stabilisation of abstract Eqs: [Polyakov et al. (2018)],  [Amaliki, Y., \& Ouzahra, M. (2025)], [Jammazi et al. (2023)]...

\vspace{0.2cm}

$\star$ FTS of PDEs: [Coron, J. M., $\&$ Nguyen, H. M. (2017)], [Barbu, V. (2018)], [Espitia et al. (2019)], [Zhang, C. 2019], [Ouzahra, M. (2021)]...

\section{Notions of FTS/FxTS/PrTS}

\subsection{Two elementary examples}

{\bf \it    Example 1.}\\
Let
$$
({\blue E_0}) \hspace{0.75cm} \dot{x}(t) = u(t)x(t)
$$
 Consider the control: $ u = -|x|^{-2\mu} {\bf 1}_{(x\ne0)},\; 0<\mu <1/2$.\\
 For $ V(x) = x^2(t) $, we have
$$
{\blue \dot{V} = -2V^{1 - \mu }}.
$$
Integrating over the maximal interval on which $ x(t) \neq 0 $ yields:
$$
 V^{\mu}(t) = \left\{
                    \begin{array}{ll}
                      V^{\mu}(0) - 2 \mu t, & \text{if} \; t < \frac{V^\mu (0)}{2\mu} \\
                      0, & \text{if} \; t \geq \frac{V^\mu (0)}{2\mu}
                    \end{array}
                  \right.
$$
This yields the {\blue FTS}: $x(t)=0,\; \forall t\ge  T := \frac{\|x_0\|^{2\mu}}{2\mu}$.

\vspace{1cm}

{\bf \it Example 2.}\\
Let
$$
({\blue E_1}) \hspace{0.75cm} \theta_t = \theta_{xx} + u(t) \theta
$$
with Dirichlet boundary conditions (DBC).\\
 The equation $ ({\blue E_1}) $ is FTS under the control
    $$
    u(t) = -\|\theta(t)\|^{-2\mu} {\bf 1}_{(\theta(t)\ne0)}, \quad 0 < \mu < 1/2,
    $$
 This gives the FTS: $\theta(t)=0,\; \forall t\ge  T := \frac{\|\theta(0)\|^{2\mu}}{2\mu}$.
\\
 The approach relies on the following Lyapunov functions:

\vspace{0.2cm}

$\rightsquigarrow$ ${\blue V(x)=\|x\|_d}$ which, in this situation, turns out to be $V(x)=\|x\|^m, \; 0<m<0$ (where $\|\cdot\|_d$ is an homogeneous norm)

 (see {\bf  Polyakov, A., Coron, J. M., $\&$ Rosier, L. (2018)}).

\vspace{0.2cm}

$\rightsquigarrow$ ${\blue V(x)=\|x\|^2}$ (see {\bf Ouzahra, 2021}).

\subsection{Considered system and definitions}

Let us consider the system:
$$
    \textcolor{blue}{(\text{BLS})} \hspace{1cm} \dot{x}(t) = Ax(t) + \textcolor{red}{u(t) Bx(t)}, \quad x(0) = x_0,
    $$
where

\begin{itemize}

  \item    $H$ is a Hilbert state space with inner product $\langle \cdot , \cdot \rangle$ and corresponding norm $\|\cdot\|$.

\item The system operator $A$ generates a $C_0$-semigroup $S(t).$

\item The operator $ B $ is a \textcolor{blue}{bounded linear} operator on $ H $, such that $ B^* = B \geq 0 $.

   \item The function $ u(t) \in \mathbb{R} $ is the  control.

\end{itemize}

\vspace{0.5cm}

Let us recall the notion of stabilization in finite time.\\

\vspace{0.2cm}

{\bf Definition.}

 {\it $\bullet$ A control system ({\blue BLS})  is said to be {\blue finite-time stabilizable} (FTS) at the origin ($0$ is an equilibrium of the uncontrolled system; i.e. $u=0$ ) if:

\vspace{0.2cm}

$\rightsquigarrow$  there exists a feedback control such that the system in closed-loop admits a unique solution such that the corresponding closed-loop system satisfies the following properties:

\vspace{0.2cm}

$\rightsquigarrow$ $(0)$ is a Lyapunov stable equilibrium (i.e.,  every trajectory starting inside the $\alpha-$ball remains inside
the $\varepsilon-$ball for all future time),

\vspace{0.2cm}

$\rightsquigarrow$ $\exists T = T(x_0) > 0$ such that $x(t) = 0, \quad \forall t \ge T.$

\vspace{0.2cm}

 In this case, the time $T_*(x_0) := \inf \{ t > 0 : x(t) = 0 \}$ is called the {\blue settling time}.

\vspace{0.2cm}

$\bullet$ If, in addition, $\sup_{x_0} T_*(x_0) < +\infty$, we say that the system is
 {\blue fixed-time stable (FxTS)}.

 $\bullet$ In the case where the settling time can be chosen a priori and arbitrarily, i.e. if for any given $T>0$ there exists a feedback law for which the above properties of FTS are satisfied, then the system is said to have the {\blue stability in prescribed time   (PrTS)}.

}

\begin{remarque}

$\star$ If the closed-loop system possesses the property of backward uniqueness in time, then finite-time stabilization (FTS) cannot occur.

\vspace{0.25cm}

$\star$ In particular, if the system is reversible in time, then there exists no control $u$ for which the mapping $f:x\mapsto u(x)Bx$ is locally Lipschitz and the system exhibits finite-time stabilization (FTS).

\vspace{0.25cm}

$\star$ See [Bardos and Tartar, 1973] for the backward uniqueness of certain classes of parabolic equations.

\end{remarque}

\subsection{ Necessary conditions for FTS/FxTS/PrTS}

\begin{theoreme}
 $\bullet$ {\blue If} the system $ (BLS) $ is FTS, {\blue then}
    $$
    \forall \xi \in H, \quad BS(t) \xi = 0, \quad \forall t \geq 0 \Rightarrow \; \exists t_1 = t_1(\xi) > 0 \;/\; S(t_1) \xi = 0.
    $$
$\bullet$ {\blue If} the system $ (BLS) $ is FxTS, {\blue then} there exists $ t_1  > 0$, such that
    $$
    \forall \xi \in H, \quad BS(t) \xi = 0, \quad \forall t \in [0,t_1] \Rightarrow \; \; S(t_1) \xi = 0.
    $$

$\bullet$ {\blue If} the system $ (BLS) $ is PrTS, {\blue then} for every $\tau>0,$ we have
    $$
       \forall \xi \in H, \quad BS(t) \xi = 0, \quad \forall t \in [0,\tau] \Rightarrow   \xi = 0.
    $$

\end{theoreme}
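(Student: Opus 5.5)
The plan is the same unobservable-state argument as in Proposition~\ref{NC-FTS}. If $BS(t)\xi=0$ on an interval, then any feedback of the form $u(t)Bx(t)$ vanishes along the free trajectory $S(t)\xi$. By uniqueness of the closed-loop solution, the controlled trajectory then coincides with the free one, and the finite-time property of the closed loop transfers to $S(t)\xi$.

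\emph{First point (FTS).} Let $\xi\in H$ satisfy $BS(t)\xi=0$ for all $t\ge0$, and fix the stabilizing feedback $u=V(x)$. Set $x(t)=S(t)\xi$. Then $u(t)Bx(t)=V(S(t)\xi)BS(t)\xi=0$, so $x$ is a mild solution of the closed-loop (BLS) with $x(0)=\xi$. By uniqueness, which is part of the definition of FTS, it is \emph{the} solution. FTS therefore gives $T(\xi)>0$ with $S(t)\xi=0$ for $t\ge T(\xi)$, and we take $t_1=T(\xi)$. This is exactly the argument of Proposition~\ref{NC-FTS} with $F(t,y)=V(y)By$, applied with the set $\{y: S(t)y=0 \text{ for some } t\}$ in place of $I_w$.

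\emph{Second point (FxTS).} Let $t_1:=\sup_{x_0}T_*(x_0)<\infty$, and suppose $BS(t)\xi=0$ only on $[0,t_1]$. Define $x(t)=S(t)\xi$ on $[0,t_1]$. Exactly as before, the nonlinear term vanishes there, so $x$ solves the closed-loop system on $[0,t_1]$. Uniqueness, applied on this interval, shows it agrees with the closed-loop solution issued from $\xi$. The settling time of that solution is at most $t_1$, and the solution stays at $0$ after it, so $S(t_1)\xi=x(t_1)=0$.

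\emph{Third point (PrTS).} Fix $\tau>0$ and $\xi$ with $BS(t)\xi=0$ on $[0,\tau]$. For any $T\in(0,\tau)$, PrTS provides a feedback with settling time at most $T$. The argument of the second point, run on $[0,T]$, gives $S(T)\xi=0$. We then let $T\to0^+$. Strong continuity of $S(\cdot)$ gives $\xi=\lim_{T\to0^+}S(T)\xi=0$.

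The main delicate step is the interval-restricted uniqueness in the second and third points. The hypothesis only controls $BS(t)\xi$ up to the settling time, so we must use that the closed-loop solution is unique on every subinterval $[0,t]$, not merely as a global object. This follows from the definition, which requires a unique solution, read as uniqueness of mild solutions on each $[0,t]$. The other point to get right is the interpretation of PrTS: settling time bounded by an arbitrary prescribed $T$ for all initial data. With that reading, choosing $T$ arbitrarily small is legitimate.
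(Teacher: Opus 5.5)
Your proposal is correct and follows the paper's argument: when $BS(t)\xi=0$ the control term vanishes along the free trajectory $S(t)\xi$, so by uniqueness this is the closed-loop trajectory and it inherits finite-time extinction. The paper writes out only the FTS case and says the FxTS and PrTS cases are similar; your use of uniqueness on $[0,t_1]$ and your limit $T\to 0^+$ via strong continuity are a valid way to fill in those two cases.
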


\textbf{Proof.}

 We will treat the case of {\blue FTS}, as the other cases can be done similarly.
\\
 Assume that the system {\blue (BLS)} is globally finite-time stable, and let $x_0\in H$ be such that $BS(t)x_0=0$ for all $t\ge 0$.
\\
 Then the unique solution of system (BLS) with initial condition $x(0)=x_0$ is given by
$
x(t)=S(t)x_0,\; t\ge 0,
$
independently of the choice of the feedback control $u(x)$.\\
  Moreover, because the system is globally finite-time stable, there exists a finite
time $t_1=t_1(x_0)\ge 0$ such that
$
S(t)x_0=x(t)=0, \; \forall\, t\ge t_1.$
  This concludes the proof.
\vspace{0.25cm}

\begin{remarque}
  If $S(t)$ is one to one (which is the case for instance if $S(t)$ is self-adjoint), then

  $\star$ the above necessary condition of  FTS is equivalent to
  $$
 {\blue (OBS)}
    \hspace{1cm}  \forall \xi \in H, \quad BS(t) \xi = 0, \quad \forall t \ge 0 \Rightarrow   \xi = 0.
    $$
    $\star$ the  necessary condition of   FxTS is equivalent to
    $$
  \forall \xi \in H, \quad BS(t) \xi = 0, \quad \forall t \in [0,t_1] \Rightarrow   \xi = 0.
    $$

\end{remarque}

Building on the general necessary condition presented earlier, the following corollary establishes a particular case of the above necessary condition.

\begin{corollaire}\label{prop1}
Assume that the system (BLS) is globally finite-time stable.

If {\blue $\ker(B)$ is invariant under $S(t),$ } then the  semigroup $S(t)$ induces on the subspace $\ker(B)$ a nilpotent semigroup
$S(t)_{\vert \ker(B)}$.
\end{corollaire}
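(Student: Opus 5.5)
We apply the first point of the preceding theorem (necessary condition for FTS) to initial states taken in $\ker(B)$. Let $\xi\in\ker(B)$. Since $\ker(B)$ is invariant under $S(t)$, we have $S(t)\xi\in\ker(B)$, that is, $BS(t)\xi=0$ for all $t\ge 0$. The function $x(t)=S(t)\xi$ then solves (BLS) with $x(0)=\xi$, whatever feedback $u$ is used, because the term $u(t)Bx(t)$ vanishes identically. By uniqueness of the closed-loop solution and global finite-time stability, we get $S(t)\xi=0$ for all $t\ge t_1(\xi)$, with $t_1(\xi)$ the settling time of $\xi$. Hence the restricted semigroup $S_0(t):=S(t)_{\vert\ker(B)}$ is a $C_0$-semigroup on the closed subspace $\ker(B)$, closed because $B$ is bounded. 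It is strongly continuous by restriction, and every orbit vanishes in finite time.

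The key step is to upgrade this pointwise nilpotency, with $t_1$ depending on $\xi$, to a uniform time $t_0$ such that $S_0(t_0)=0$; this is what nilpotency of a semigroup means. I would use a Baire category argument. For $n\in\mathbb{N}$, set
$$
E_n=\{\xi\in\ker(B)\;:\; S(n)\xi=0\}=\ker\big(S_0(n)\big).
$$
Each $E_n$ is a closed subspace of $\ker(B)$, and $E_n\subset E_{n+1}$ by the semigroup property. The pointwise statement above gives $\bigcup_n E_n=\ker(B)$. Since $\ker(B)$ is a complete metric space, Baire's theorem yields some $n_0$ such that $E_{n_0}$ has nonempty interior. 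A linear subspace with nonempty interior is the whole space, so $E_{n_0}=\ker(B)$. Therefore $S_0(n_0)=0$, and consequently $S_0(t)=S_0(t-n_0)S_0(n_0)=0$ for all $t\ge n_0$. This is precisely nilpotency of $S(t)_{\vert\ker(B)}$.

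I expect the main obstacle to be the passage from pointwise to uniform vanishing, and the Baire argument handles it. One should only check that $\ker(B)$ is closed and invariant, both of which are given or immediate, so that the restriction is a genuine semigroup on a Banach space. If one reads "nilpotent" as merely requiring each orbit to vanish eventually, the first paragraph already suffices. The Baire step gives the stronger, standard meaning, namely $S_0(t_0)=0$ for some $t_0>0$.
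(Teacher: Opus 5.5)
Your proof is correct and follows the paper's argument. You restrict to $\ker(B)$, observe that orbits there are closed-loop trajectories for any feedback and so vanish in finite time, and then apply Baire's theorem to the increasing closed subspaces $\ker\big(S(n)_{\vert\ker(B)}\big)=\ker\big((S(1)_{\vert\ker(B)})^n\big)$. You also supply details the paper leaves implicit, namely closedness of these subspaces and the fact that a subspace with nonempty interior is the whole space, and your Baire step covers the finite-dimensional case too, so no separate argument is needed there.
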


{\bf Proof.}

 If the system (BLS) is finite-time stable, then there exists a feedback control $u(x)$ such that, for every $x_0\in \mathcal{H}$, the corresponding solution satisfies, for some $T_{x_0}\ge 0$
$$
x(t)=0, \qquad \forall t\ge T_{x_0}.
$$
  Moreover, since $\ker(B)$ is invariant under $S(t)$, we see that for any
$x_0\in \ker(B)$, the unique solution of (BLS), independently of the
stabilizing control, is
$$
x(t)=S(t)_{\vert \ker(B)}x_0, \qquad t\ge 0.
$$
Hence,
$$
S(t)_{\vert \ker(B)}x_0=0, \qquad \forall t\ge T_{x_0}.
$$
 Let $n_0=\lfloor T_{x_0}\rfloor+1$. Then
$$
\big(S(1)_{\vert \ker(B)}\big)^{n_0}x_0=0.
$$
  Let us consider the set of closed subspaces
$$
E_k=\{x\in \ker(B):\; \big(S(1)_{\vert \ker(B)}\big)^{k}x=0\},\; k\in \mathbb{N}.
$$
By assumption, we have $\ker(B)=\cup_{k\ge 0} E_k.$

We conclude that

$\star$  In the infinite-dimensional setting, Baire's theorem implies that
$S(t)_{\vert \ker(B)}$ is nilpotent.

\vspace{0.25cm}

$\star$ In the finite-dimensional state-space case, the same conclusion follows from arguments specific to finite-dimensional spaces. However, in this setting, the property holds only when $\ker(B)=\{0\}$, since the exponential of a matrix is never nilpotent on a nontrivial invariant subspace.

\begin{remarque}

If $\ker(B)$ is invariant under $S(t)$, then we have the following remarks:

\vspace{0.25cm}

$\star$ If $S(t)$ is injective  and if  $\ker(B)\neq \{0\}$, then for any initial
condition $x_0\in \ker(B)\setminus\{0\}$, the system
(BLS) cannot achieve finite-time stability regardless
the choice of the control law $u$.

\vspace{0.25cm}

$\star$ This limitation prevents us from considering global finite-time stability in certain classes of bilinear systems, including {\blue commutative systems}.

\end{remarque}

\subsection{  Description of the approach and main assumptions}

\begin{itemize}

\item We first provide a stabilizing feedback control candidate $ u$ for (BLS), which leads to a Lyapunov function $ {\blue V \geq 0} $ for the system in closed-loop, in the sense that:
$ {\blue \dot{V} \leq -m V^\alpha} $, along the trajectory $ x(\cdot) $, for some $ m > 0 $ and $ 0 < \alpha < 1 $.

\vspace{0.15cm}

 \item Differentiating  formally,  leads to the following estimate (valid at least for a dissipative system operator):

$$
\frac{d}{dt} \|x(t)\|^2 \le 2u(x(t)) \langle Bx(t), x(t)\rangle.
$$

\vspace{0.15cm}

\item Inspired by the two examples above, one can consider the following feedback control (at least when $B^*=B\ge 0$):
$$ {\blue u(x) = - \langle Bx, x \rangle^{-\mu}},\; 0<\mu <1/2.$$

\end{itemize}

\begin{itemize}

\item The above control leads to the following inequality: $$ {\blue
\frac{d}{dt} \|x(t)\|^2 \le -2  |\langle Bx(t), x(t)\rangle|^{1-\mu}}.
$$

    \item Two types of Lyapunov function candidates arise:
    $$
   {\blue V(x) = \|x\|^2} \quad \text{and} \quad {\blue V(x) = \langle Bx, x \rangle}.
    $$

\vspace{0.25cm}

\item In the sequel, we assume that {\blue $B^*=B\ge 0$} on $H$.

\end{itemize}

\begin{itemize}

\item We will first  consider  the FTS of the system \textcolor{blue}{(BLS)} on {\blue invariant closed} subspace $F$ of $H$, on which $B=B^*$ is coercive:
    $$
    \textcolor{blue}{B \ge \beta I} \quad \text{on} \quad \textcolor{blue}{F}, \quad (\beta > 0).
    $$

    \item  Using  the fact that $B^*=B\ge 0, $ it is straightforward to verify that if $ B $ is \textcolor{blue}{coercive on a closed, $ B $-invariant subspace} $ F \subset \mathcal{H} $, then {\blue $ F \subset \ker(B)^\perp $}.

     In other words, the subspace $ \ker(B)^\perp $ is the \textcolor{blue}{largest subspace} on which $ B $ can be coercive.

    \vspace{0.25cm}

    \item We will therefore study FTS on $ \ker(B)^\perp $, and then provide situations in which, one deduce the FTS on the entire state space in both cases: $ \ker(B)^\perp $ is or not invariant under $S(t)$.

\end{itemize}

\vspace{0.15cm}

{\bf Main Assumptions. }

\vspace{0.5cm}

$\bullet$ $  ({\cal A}).$ The system operator $A$ generates a  $C_0-$semigroup of {\blue contractions} $S(t)$.

\vspace{0.25cm}

$\bullet$ $  ({\cal B}).$  {\blue $B^* = B\ge 0$ with $B\ge \beta I$} on {\blue $\ker(B)^\perp$}, \; $(\beta>0)$.

\vspace{0.5cm}

$\star$ In the context where $ B^* = B \geq 0 $, the above coercivity condition is equivalent to the fact that $ B $ has a closed range.

\vspace{0.25cm}

$\star$ Note that the contraction assumption in $({\cal A})$ can be replaced by the quasi-contraction property, which, together with the second part of assumption $({\cal B})$, is automatically satisfied in the {\blue finite-dimensional case}.

\vspace{0.25cm}

$\star$ The assumption $({\cal B})$ is also satisfied in the case of local action: $B=\chi_\omega$ on $H=L^2(\Omega),...$

\subsection{Stabilization in finite time: The case where $ \ker(B)^\perp$ is invariant under $S(t)$ }

   Here, we take the Lyapunov function candidate: $V(x)=\|x\|^2$ and assume that $\ker(B)^\perp$
is invariant under $S(t)$, which will guarantees  the well-posedness of the subsystem
of (BLS) induced on $\ker(B)^\perp$.

\begin{theoreme}
 Assume that the assumptions $ (\mathcal{A}) $ and $ (\mathcal{B}) $ hold, and that
    {\blue $ \ker(B)^\perp $ is invariant under the semigroup $ S(t) $}.\\
     Then  the system \textcolor{blue}{(BLS)} is finite-time stable on \textcolor{blue}{$ \ker(B)^\perp $}.   Moreover, the stabilizing control is given by
$$ {\blue u(x) =   - \langle Bx, x \rangle^{-\mu}}, $$
 with $u(x)=0,$ if $x=0$, and $ 0 < \mu < 1/2 $.\\
 Furthermore, if $ \ker(B)^\perp $ is absorbent, in a finite time $\tau>0$, for the semigroup $S(t)$, then we get the global FTS using the control
$$
u=\left\{
    \begin{array}{ll}
      0, \; 0\le t\le \tau \\
\\
      - \langle Bx, x \rangle^{-\mu} {\bf 1}_{(x\ne0) }, \; t>\tau
    \end{array}
  \right.
$$

\end{theoreme}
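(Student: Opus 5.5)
Our plan is to restrict the dynamics to $F:=\ker(B)^\perp$, which is closed, invariant under $S(t)$ by hypothesis, and invariant under $B$ because $B=B^*$ gives $B(\ker(B)^\perp)\subset \overline{\mathrm{Ran}(B)}=\ker(B)^\perp$. Thus $A_F:=A_{\vert F}$ generates the contraction semigroup $S(t)_{\vert F}$ and $B_F:=B_{\vert F}$ satisfies $B_F\ge \beta I$ on $F$ by $(\mathcal{B})$. On $F$ the closed-loop nonlinearity is
$$
f(x)=-\langle Bx,x\rangle^{-\mu}Bx,\qquad f(0)=0 .
$$
It is continuous, since $\|f(x)\|\le \|B\|^{1+\mu}\|x\|^{1-2\mu}\beta^{-\mu}\cdots\to 0$, indeed $\|f(x)\|\le \|B\|\,\beta^{-\mu}\|x\|^{1-2\mu}$. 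It is also locally Lipschitz on $F\setminus\{0\}$, because $\langle Bx,x\rangle\ge\beta\|x\|^2$ bounds the denominator away from zero there.

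\textbf{Well-posedness.} The key structural fact is that $f=-\partial\Phi$ with
$$
\Phi(x)=\frac{1}{1-\mu}\langle Bx,x\rangle^{1-\mu},
$$
a convex functional. Convexity holds because $x\mapsto\langle Bx,x\rangle^{1/2}$ is a seminorm and $s\mapsto s^{2(1-\mu)}$ is convex and increasing, since $2(1-\mu)>1$. Hence $f$ is monotone decreasing, and $-A_F-f$ is maximal monotone, being a maximal monotone operator plus the continuous monotone everywhere-defined map $-f$. By the theory of Brézis (1973), this yields a unique global solution in $F$, with uniqueness obtained from the monotonicity estimate $\frac{d}{dt}\|x-y\|^2\le 0$. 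Alternatively, local Lipschitz continuity away from $0$ gives a unique mild solution up to the first hitting time of $0$, after which $x\equiv0$ is the continuation. Uniqueness across that instant still needs the monotonicity argument, which we expect to be the delicate point.

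\textbf{Energy inequality.} For $x_0\in D(A_F)$ we differentiate, using dissipativity of $A$ and then coercivity:
$$
\frac{d}{dt}\|x\|^2\le -2\langle Bx,x\rangle^{1-\mu}\le -2\beta^{1-\mu}\big(\|x\|^2\big)^{1-\mu}.
$$
This inequality holds while $x(t)\ne0$, and it extends to all $x_0\in F$ by density together with continuous dependence, which comes from the nonexpansiveness of the monotone flow. Setting $V=\|x\|^2$, we get $\frac{d}{dt}V^{\mu}\le -2\mu\beta^{1-\mu}$, so
$$
x(t)=0\quad\text{for all } t\ge T(x_0):=\frac{\|x_0\|^{2\mu}}{2\mu\beta^{1-\mu}},
$$
exactly as in Example 1. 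Lyapunov stability follows from $\|x(t)\|\le\|x_0\|$. This proves FTS on $F$.

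\textbf{Global statement.} Suppose $F$ is absorbent in time $\tau$, i.e. $S(\tau)H\subset F$. On $[0,\tau]$ we apply $u=0$, so $x(\tau)=S(\tau)x_0\in F$ with $\|x(\tau)\|\le\|x_0\|$. Afterwards we apply the feedback, and since $F$ is invariant the trajectory stays in $F$. The previous step then gives $x(t)=0$ for $t\ge \tau+\|x_0\|^{2\mu}/(2\mu\beta^{1-\mu})$, and stability persists because the norm never increases.

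The main obstacle is the rigorous well-posedness and the justification of the differential inequality through the singularity at $0$. We plan to handle both by recasting the closed loop as a subgradient perturbation of a dissipative generator.
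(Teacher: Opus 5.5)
Your proposal is correct and follows essentially the same route as the paper. Both arguments restrict to $\ker(B)^\perp$, get well-posedness from maximal monotonicity via the gradient of the convex functional $\langle Bx,x\rangle^{1-\mu}$, and integrate the energy inequality for $V=\|x\|^2$ along strong solutions, passing to the limit by density to obtain $T_*\le \|x_0\|^{2\mu}/(2\mu\beta^{1-\mu})$. The global case is then handled in both by taking $u=0$ on $[0,\tau]$ and restarting from $S(\tau)x_0\in\ker(B)^\perp$. The paper normalizes the functional as $\frac{\langle Bx,x\rangle^{1-\mu}}{2(1-\mu)}$, so your $\Phi$ is off by a harmless factor $2$. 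Your extra justifications fill in steps the paper only asserts: the $B$-invariance of $\ker(B)^\perp$ from $B=B^*$, and convexity via the seminorm $\langle Bx,x\rangle^{1/2}$.
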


{\bf Proof.}

{\bf I- Well-posedness.} First, observe that the space $F=(\ker(B))^{\perp}$ is a Hilbert space as it is a closed subspace of $H$.

The system in closed loop induces a subsystem on $(\ker(B))^{\perp}$ can be written as

\begin{equation}\label{max}
\left\{
\begin{array}{l}
\dfrac{d}{dt} x(t)+\mathcal{A} x(t)=0, \quad t>0,\\
x(0)=x_{0} \in (\ker(B))^{\perp},
\end{array}
\right.
\end{equation}
where the operator
$$
\mathcal{A}: D(A)\cap(\ker(B))^{\perp}\to(\ker(B))^{\perp}
$$
is defined by
$$
\mathcal{A}z=-Az+\Theta(z),
$$
and $\Theta$ is defined for all $z\in(\ker(B))^{\perp}$ by
$$
\Theta(z)=
\begin{cases}
\langle Bz,z\rangle^{-\mu}Bz, & z\neq 0,\\
0, & z=0.
\end{cases}
$$
To establish the well-posedness of system {\color{blue}{(\ref{max})}},
we use an approach based on maximal monotone operators.\\
 Since $S(t)$ is a contraction semigroup, the operator $A$ with
domain $D(A)$ is maximal monotone.
It therefore suffices to prove that $\Theta$ is maximal monotone.\\
  Consider the functional
$$
\varphi(z)=\frac{\langle Bz,z\rangle^{1-\mu}}{2(1-\mu)},
\qquad \forall z\in(\ker(B))^{\perp}.
$$
 Clearly, $\varphi$ is convex and continuous on $(\ker(B))^{\perp}$.\\
Writing $\varphi=h\circ\psi$ with
$$
h(\lambda)=\frac{\lambda^{1-\mu}}{2(1-\mu)},
\quad \lambda\in\mathbb{R},
\qquad
\psi(z)= \langle Bz,z\rangle,
$$
we see that $\varphi$ is Fr\'echet differentiable for every
$z\in(\ker(B))^{\perp}\setminus\{0\}$, and
$$
\varphi'(z)=h'(\psi(z))\psi'(z)
=\langle Bz,z\rangle^{-\mu}Bz.
$$
  For $z=0$, using the self-adjointness of $B$, we verify that
$\lim_{h\to 0}\frac{\varphi(h)}{\|h\|}=0.$
 We conclude that $\varphi$ is differentiable on all of $(\ker(B))^{\perp}$
and $\Theta=\varphi'$. Hence  (see Brezis, 1973), the operator $\Theta$ is maximal monotone.
\\
 Therefore, (see Brezis, 1973), the operator $\mathcal{A}$ generates a nonlinear semigroup and the closed-loop system {\color{blue}{(\ref{max})}} admits a unique weak solution $x(t)$.
\\
 That is,  there exists a sequence
$x_0^n \in D(A) \cap (\ker(B))^{\perp}$ such that $x_0^n \to x_0$, and the system
(\ref{max}) with initial state $x_0^n$ admits a strong solution
$x^n \in C([0,+\infty),(\ker(B))^{\perp})$ satisfying
$x^n \to x$ uniformly on $[0,T]$ for every $T>0$.

\vspace{0.25cm}

{\bf II-FTS}

\vspace{0.25cm}

$\star$  For almost every $t>0$, we have $x^n(t) \in D(A) \cap (\ker(B))^{\perp}$ and
$$
\frac{d}{dt}x^n(t)
= A x^n(t)
-\langle Bx^n(t),x^n(t)\rangle^{-\mu}
B x^n(t)\mathbf{1}_{\{x^n(t)\neq0\}}.
$$
$\star$ To establish the finite-time stability of (\ref{max}), consider the Lyapunov function
$$
V(x)=\|x\|^2 .
$$
Its derivative along the trajectories of (\ref{max}) yields
$$
\frac{1}{2}\frac{d}{dt}\|x^n(t)\|^2
=\langle Ax^n(t),x^n(t)\rangle
-\langle Bx^n(t),x^n(t)\rangle^{1-\mu}.
$$
Using the dissipativity of $A$ together with assumption $ (\mathcal{B}), $ we obtain
$$
\frac{1}{2}\frac{d}{dt}\|x^n(t)\|^2
\le -\beta^{1-\mu} \|x^n(t)\|^{2(1-\mu)}
\quad \text{for a.e. } t\ge0.
$$
Integrating this differential equation over the maximal interval
$[0,\tau)$, $\tau\le +\infty$, on which $x^n$ does not vanish,
we see that $\tau<+\infty$ and that (we can also apply the comparison principle)
 $$
\begin{cases}
\|x^n(t)\|^{2\mu}
\le \|x_0^n\|^{2\mu}-2\mu \beta^{1-\mu}  t,
& t\le \dfrac{\|x_0^n\|^{2\mu}}{2\mu \beta^{1-\mu} },\\
\\
\|x^n(t)\|=0,
& t\ge \dfrac{\|x_0^n\|^{2\mu}}{2\mu \beta^{1-\mu} }.
\end{cases}
$$
Letting $n\to+\infty$, we deduce that
$$
\begin{cases}
\|x(t)\|^{2\mu}
\le \|x_0\|^{2\mu}-2\mu \beta^{1-\mu}  t,
& t\le \dfrac{\|x_0\|^{2\mu}}{2\mu \beta^{1-\mu} },
\\
\|x(t)\|=0,
& t\ge T^*=\dfrac{\|x_0\|^{2\mu}}{2\mu \beta^{1-\mu} },
\end{cases}
$$
giving rise to finite time extinction of the solution.\\
Hence we have the FTS with the following estimation of the settling time:
$$
T_*(x_0)\le \dfrac{\|x_0\|^{2\mu}}{2\mu \beta^{1-\mu} }.
$$

Finally, the GFTS follows by applying the first point to the new initial state $X_0=S(\tau)x_0\in \ker(B)^\perp.$

\subsection{ FxTS $\&$ PrTS: $ \ker(B)^\perp$ is invariant under $S(t)$  }

\begin{theoreme}

     Assume that the assumptions $ (\mathcal{A}) $ and $ (\mathcal{B}) $ hold, and that
    {\blue $ \ker(B)^\perp $ is invariant under the semigroup $ S(t) $}.

     Then

\vspace{0.2cm}

(i) the system \textcolor{blue}{(BLS)} is fixed-time stable on \textcolor{blue}{$ \ker(B)^\perp $}, and the stabilizing control is given by: $$ {\blue u(x) =   - \langle Bx, x \rangle^{-\mu}- \langle Bx, x \rangle^{+\mu}}, $$
 with $u(x)=0,$ if $x=0$, and $ 0 < \mu < 1/2 $.\\
 Moreover, we have
$$
T_*(x_0)\le \dfrac{\pi}{4 \mu \beta^{1-\mu}}.
$$
\end{theoreme}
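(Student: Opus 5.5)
The plan mirrors the proof of the preceding FTS theorem, with the single power nonlinearity replaced by the sum of two homogeneous terms. The proof has three steps: well-posedness through a maximal monotone (subdifferential) structure on $F=\ker(B)^\perp$, a differential inequality for $V(x)=\|x\|^2$, and a comparison argument that removes the dependence on $x_0$.

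\emph{Step 1: well-posedness.} On $F=\ker(B)^\perp$, which is invariant under $S(t)$ by hypothesis, I write the closed-loop system as $\dot x+\mathcal{A}x=0$ with $\mathcal{A}z=-Az+\Theta(z)$, where
$$\Theta(z)=\big(\langle Bz,z\rangle^{-\mu}+\langle Bz,z\rangle^{\mu}\big)Bz,\qquad \Theta(0)=0 .$$
I take the convex continuous functional
$$\varphi(z)=\frac{\langle Bz,z\rangle^{1-\mu}}{2(1-\mu)}+\frac{\langle Bz,z\rangle^{1+\mu}}{2(1+\mu)} .$$
Exactly as before, writing $\varphi=h\circ\psi$ with $\psi(z)=\langle Bz,z\rangle$ gives $\varphi'=\Theta$ on $F\setminus\{0\}$. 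The first term has differential $0$ at $z=0$ (already checked in the previous proof), and the second term is $C^1$ with zero derivative at $0$. Hence $\Theta=\varphi'$ is maximal monotone, and so is $\mathcal{A}$, since $-A$ is maximal monotone and $\Theta$ is continuous and monotone. Brezis' theory then gives a unique weak solution $x(t)$, obtained as the uniform limit on compacts of strong solutions $x^n$ with data $x_0^n\in D(A)\cap F$.

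\emph{Step 2: energy inequality.} For the strong solutions, dissipativity of $A$ and the coercivity $B\ge\beta I$ on $F$ from $(\mathcal{B})$ give, almost everywhere on the interval where $x^n\neq0$,
$$\frac{d}{dt}V\le -2\langle Bx^n,x^n\rangle^{1-\mu}-2\langle Bx^n,x^n\rangle^{1+\mu}\le -2\big(aV^{1-\mu}+bV^{1+\mu}\big),$$
with $V=\|x^n\|^2$, $a=\beta^{1-\mu}$ and $b=\beta^{1+\mu}$.

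\emph{Step 3: uniform settling time.} I set $w=V^{\mu}$, so that
$$\dot w=\mu V^{\mu-1}\dot V\le -2\mu\,(a+b\,w^2).$$
Separating variables and using the comparison principle, $w$ reaches $0$ before
$$\int_0^{w(0)}\frac{dw}{2\mu(a+bw^2)}\;\le\;\int_0^{\infty}\frac{dw}{2\mu(a+bw^2)}=\frac{\pi}{4\mu\sqrt{ab}} .$$
This bound is independent of $x_0^n$. Once $x^n$ vanishes it stays at $0$, because $V$ is nonincreasing. Letting $n\to\infty$ then transfers the extinction property, and the same time bound, to $x(t)$. Lyapunov stability follows from $\|x(t)\|\le\|x_0\|$. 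Together these give FxTS on $F$.

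\emph{Main obstacle: the constant.} The computation above gives $\sqrt{ab}=\beta$, i.e. $T_*\le \pi/(4\mu\beta)$. To reach the stated form $\pi/(4\mu\beta^{1-\mu})$, I would use $a=b=\beta^{1-\mu}$ in the integral. This requires $\beta^{1+\mu}\ge\beta^{1-\mu}$, which holds when $\beta\ge1$. The case $\beta<1$ therefore needs care: either a rescaling of the lower bound on the term $\langle Bx,x\rangle^{1+\mu}$, or accepting the bound $\pi/(4\mu\min(\beta,\beta^{1-\mu}))$. I would first check whether the statement implicitly normalizes $\beta\ge1$, and otherwise state the bound with $\sqrt{ab}$.
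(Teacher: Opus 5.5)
Your argument is correct and follows the paper's route. You get well-posedness on $\ker(B)^\perp$ from the gradient of a convex functional, as in the FTS theorem; your two-term $\varphi$ is the natural adaptation, which the paper leaves implicit. You then use $V=\|x\|^2$ and the Parsegov-type lemma, which your substitution $w=V^{\mu}$ simply reproves.

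On the constant, your computation is the right one and the printed bound is what is wrong. The paper's own lemma, applied with $\nu=\mu$, $a=2\beta^{1-\mu}$, $b=2\beta^{1+\mu}$, also gives $\pi/(2\mu\sqrt{ab})=\pi/(4\mu\beta)$.

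For $\beta<1$, no rescaling can recover $\pi/(4\mu\beta^{1-\mu})$, because that bound is false in general. Take $H=\mathbb{R}$, $A=0$, $B=\beta\in(0,1)$. Then $w=(\beta x^2)^{\mu}$ satisfies $\dot w=-2\mu\beta(1+w^2)$ with equality, so $T_*(x_0)=\arctan\big((\beta x_0^2)^{\mu}\big)/(2\mu\beta)$. This tends to $\pi/(4\mu\beta)>\pi/(4\mu\beta^{1-\mu})$ as $|x_0|\to\infty$.

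Nothing in $(\mathcal{B})$ normalizes $\beta\ge 1$, so you should state the result as $T_*(x_0)\le\arctan\big(\beta^{\mu}\|x_0\|^{2\mu}\big)/(2\mu\beta)\le\pi/(4\mu\beta)$. This is what your $\arctan$ integration gives, with $\sqrt{b/a}=\beta^{\mu}$ and $\sqrt{ab}=\beta$, and the scalar example shows it is sharp. The printed bound follows from it when $\beta\ge 1$. Your fallback $\pi/(4\mu\min(\beta,\beta^{1-\mu}))$ is valid but weaker than $\pi/(4\mu\beta)$ when $\beta>1$.

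The same correction applies to the paper's heat-equation example, where $\beta=\min_j a_j$ may be below $1$.
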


(ii) If in addition $ \ker(B)^\perp $ is absorbent for $S(t)$ at a $\tau>0,$ then we have the {\blue FxTS on $H$} with the control:
$$
{\blue \bar{u}={\bf 1}_{[0,\tau]} u}
$$
 for $0<\mu<1/2$.
\\  More precisely, we have

$$
x(t)=0, \; \forall t\ge  \tau + \dfrac{\pi}{4 \mu \beta^{1-\mu}}.
$$
{\bf Proof.}\\
 The first part of the proof follows from the next lemma, while the second part of the theorem comes from the same arguments as in the case of FTS.\\

\begin{lemme} (see [Parsegov et al., 2012])

Let $x$ be a solution of the  system
$$
\dot{x}(t) = g(t,x(t)),
$$
where $g : \mathbb{R}_+ \times H \to H$ is a (possibly nonlinear)
function satisfying $g(t,0)=0$ for all $t \ge 0$.
Let $V : H \to \mathbb{R}$ be a continuous, positive definite, and radially unbounded function.

If
$$
\frac{d}{dt} V(x(t))
\le - a V^{1-\nu}(x(t)) - b V^{1+\nu}(x(t)),
$$
for some $a,b>0$ and $0<\nu<\frac{1}{2}$, then
$$
x(t)=0, \quad \text{for all } t \ge T := \frac{\pi}{2\nu\sqrt{ab}}.
$$
\end{lemme}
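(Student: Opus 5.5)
The plan is to reduce the differential inequality to a scalar comparison argument, after a change of variables that turns the right-hand side into a Riccati-type expression with an explicit arctangent primitive. Set $v(t):=V(x(t))\ge 0$. I will interpret the hypothesis in the sense used throughout Part III: $v$ is absolutely continuous on compact intervals, and the inequality holds for almost every $t$. This is how it arises from the energy estimates for the approximating strong solutions $x^n$, with the passage to the limit done exactly as in the proof of the FTS theorem. Since the right-hand side is $\le 0$, $v$ is non-increasing. Consequently, once $v(t_0)=0$ we have $v(t)=0$ for all $t\ge t_0$, and positive definiteness of $V$ then gives $x(t)=0$ for $t\ge t_0$. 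It therefore suffices to show that $v$ vanishes at some time $t_0\le T=\frac{\pi}{2\nu\sqrt{ab}}$.

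Let $[0,\tau)$, with $\tau\le+\infty$, be the maximal interval on which $v>0$. On every compact subinterval of $[0,\tau)$, $v$ is bounded below by a positive constant, so $w:=v^{\nu}$ is absolutely continuous there. The chain rule then gives, for a.e. $t<\tau$,
$$
\dot w=\nu v^{\nu-1}\dot v\le -\nu\big(a+b\,v^{2\nu}\big)=-\nu\big(a+b\,w^{2}\big).
$$
Next I introduce $\theta(t):=\arctan\big(\sqrt{b/a}\,w(t)\big)$, which takes values in $[0,\pi/2)$. Differentiating,
$$
\dot\theta=\frac{\sqrt{b/a}\,\dot w}{1+\frac{b}{a}w^2}\le -\nu\sqrt{b/a}\,\frac{a+bw^2}{1+\frac{b}{a}w^2}=-\nu\sqrt{ab}.
$$
Integrating on $[0,t]$ for $t<\tau$ yields
$$
0\le\theta(t)\le\theta(0)-\nu\sqrt{ab}\,t<\frac{\pi}{2}-\nu\sqrt{ab}\,t.
$$
This forces $\tau<\frac{\pi}{2\nu\sqrt{ab}}$, since otherwise the right-hand side would become non-positive while $\theta(t)\ge0$. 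By continuity, $v(\tau)=0$, and the monotonicity observation above finishes the argument with $T=\frac{\pi}{2\nu\sqrt{ab}}$. The bound is independent of $x_0$, which is the fixed-time property; it comes from the fact that $\arctan$ is bounded by $\pi/2$ regardless of the initial value. I also note that the restriction $\nu<1/2$ plays no role in this computation; only $\nu>0$ is used.

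The main obstacle I expect is regularity rather than the computation itself. The map $s\mapsto s^{\nu}$ is not Lipschitz at $0$, so the chain rule for $w=v^{\nu}$ is justified only away from the zero set of $v$. This is why the argument is run on the maximal interval of positivity and closed using continuity and monotonicity. If one only has weak (limit) solutions, I would first carry out the argument for the strong approximations $x^n$. The bound $T$ is uniform in $n$, so letting $n\to\infty$ together with the uniform convergence $x^n\to x$ on $[0,T]$ transfers the extinction to $x$, exactly as in the finite-time stability proof above. When applying the lemma in part (i) of the theorem, one takes $V=\|x\|^2$. Coercivity gives $\dot V\le -2\beta^{1-\mu}V^{1-\mu}-2\beta^{1+\mu}V^{1+\mu}$, and substituting $a=2\beta^{1-\mu}$, $b=2\beta^{1+\mu}$, $\nu=\mu$ into the lemma then yields the settling-time estimate.
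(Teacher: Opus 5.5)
Your proof is correct. The paper gives no proof of this lemma; it only cites [Parsegov et al., 2012]. The usual derivation of this bound, as in that reference, applies the same substitution $s\mapsto s^{\nu}$ to the separable comparison equation. The extinction time is then $\int_0^{V(x_0)}\frac{ds}{a s^{1-\nu}+b s^{1+\nu}}=\frac{1}{\nu\sqrt{ab}}\arctan\big(\sqrt{b/a}\,V(x_0)^{\nu}\big)<\frac{\pi}{2\nu\sqrt{ab}}$, and the conclusion follows by comparison.

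Your version differs in that you work directly with the differential inequality for $\theta=\arctan\big(\sqrt{b/a}\,v^{\nu}\big)$. This avoids a separate comparison principle and handles the non-Lipschitz point $v=0$ explicitly, by restricting to the maximal interval where $v>0$. The cost is that you must fix a regularity convention for $t\mapsto V(x(t))$, namely absolute continuity with the inequality holding a.e. You state this explicitly, whereas the lemma as written leaves it implicit. You are also right that neither $\nu<\frac12$ nor radial unboundedness is used.

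One caution about your closing remark, which goes beyond the lemma itself. With $a=2\beta^{1-\mu}$ and $b=2\beta^{1+\mu}$ you get $\sqrt{ab}=2\beta$, so the lemma yields $T_*\le\frac{\pi}{4\mu\beta}$. The paper's fixed-time theorem instead displays $\frac{\pi}{4\mu\beta^{1-\mu}}$. The two coincide only when $\beta=1$, and your bound implies the paper's only when $\beta\ge 1$. So the substitution does not by itself "yield the settling-time estimate" as the paper states it.
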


\begin{remarque}
   If the main inequality of the above lemma holds with $b=0$, then we retrieve the
finite-time stability, with a settling time that depends on the initial condition.

\end{remarque}

\vspace{0.5cm}

\begin{theoreme}

 Assume that the assumptions $ (\mathcal{A}) $ and $ (\mathcal{B}) $ hold, and that
    {\blue $ \ker(B)^\perp $ is invariant under the semigroup $ S(t) $}.
\\
Then  the system \textcolor{blue}{(BLS)} is stablizable in prescribed time on \textcolor{blue}{$ \ker(B)^\perp $}.     Moreover, the stabilizing control is given by $$ {\blue u(x) =  - \rho \langle Bx, x \rangle^{-\mu}- \rho \langle Bx, x \rangle^{+\mu}}, $$
 with $u(x)=0,$ if $x=0$, and   where  $ 0 < \mu < 1/2 $ and $\rho$ is a gain control that can be chosen to reach an a priori given settling time.

\end{theoreme}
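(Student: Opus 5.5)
The plan is to repeat the fixed-time argument with the gain $\rho$ tracked explicitly, and then pick $\rho$ so that the resulting bound on the settling time equals the prescribed $T>0$.

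\textbf{Well-posedness.} As in the proof of the FTS theorem, I work on $F=\ker(B)^\perp$ and write the closed loop as $\dot x+\mathcal{A}_\rho x=0$ with $\mathcal{A}_\rho z=-Az+\rho\,\Theta_\mu(z)+\rho\,\Xi_\mu(z)$. Here $\Theta_\mu(z)=\langle Bz,z\rangle^{-\mu}Bz$ as before, and $\Xi_\mu(z)=\langle Bz,z\rangle^{\mu}Bz$. The term $\Xi_\mu$ is the Fr\'echet derivative of the convex, continuous functional $z\mapsto \frac{\langle Bz,z\rangle^{1+\mu}}{2(1+\mu)}$, which is even $C^1$, so it is maximal monotone. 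The sum of the two convex functionals, multiplied by $\rho>0$, is convex and continuous, and its derivative is $\rho(\Theta_\mu+\Xi_\mu)$. By Br\'ezis (1973), this derivative is maximal monotone, and adding the maximal monotone operator $-A$ gives a generator of a nonlinear contraction semigroup. As before, I approximate $x_0$ by data $x_0^n\in D(A)\cap F$ with strong solutions $x^n$.

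\textbf{Energy inequality.} Differentiating $\|x^n(t)\|^2$, using the dissipativity of $A$ and $\langle Bx,x\rangle\ge\beta\|x\|^2$ on $F$ from $(\mathcal{B})$, gives
$$\frac{d}{dt}\|x^n\|^2\le -2\rho\,\beta^{1-\mu}\|x^n\|^{2(1-\mu)}-2\rho\,\beta^{1+\mu}\|x^n\|^{2(1+\mu)}.$$
With $V=\|x\|^2$ this has the form of the Lemma of [Parsegov et al., 2012] with $\nu=\mu$, $a=2\rho\beta^{1-\mu}$ and $b=2\rho\beta^{1+\mu}$. The Lemma then gives $x^n(t)=0$ for
$$t\ge T_\rho:=\frac{\pi}{2\mu\sqrt{ab}}=\frac{\pi}{4\mu\rho\beta}.$$
This bound is uniform in $n$ and in $x_0$, so letting $n\to\infty$ (the convergence is uniform on compact time intervals) gives $x(t)=0$ for $t\ge T_\rho$. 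Lyapunov stability follows because $\|x(t)\|$ is nonincreasing.

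\textbf{Choice of $\rho$.} Given a prescribed $T>0$, I take $\rho=\frac{\pi}{4\mu\beta T}$, so that $T_*(x_0)\le T$ for all $x_0\in F$. This gives PrTS on $\ker(B)^\perp$.

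The only delicate point is applying the comparison lemma to $V(x^n(t))$. The function $t\mapsto V(x^n(t))$ is only absolutely continuous, so the lemma has to be applied in its a.e.\ form. To check this I integrate the scalar inequality explicitly: the substitution $w=V^{\mu}$ turns it into $\dot w\le -\mu(a+bw^2)$, whose arctangent solution reaches zero before $\frac{\pi}{2\mu\sqrt{ab}}$. The same comparison argument used in the FTS proof handles this step.
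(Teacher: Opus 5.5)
Your proposal is correct and follows the route the paper intends for this statement: the maximal-monotone well-posedness from the FTS proof, the energy inequality for $V=\|x\|^2$, the lemma of [Parsegov et al., 2012], and then scaling the gain $\rho$ (the paper gives no separate proof of this theorem). Your constant $\sqrt{ab}=2\rho\beta$, hence $T_\rho=\pi/(4\mu\rho\beta)$ and $\rho=\pi/(4\mu\beta T)$, is what the lemma actually yields; the paper's bound $\pi/(4\mu\beta^{1-\mu})$ and the gain $\lambda\ge \pi/(4T\mu\beta^{1-\mu})$ in its examples are valid upper bounds only when $\beta\ge 1$.
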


{\bf Remark.} In the case of $\omega_0-$quasi-contractive semigroup $S(t)$, the quantity $-\frac{\omega_0}{\beta} $ should be added in the expression of the above feedback laws:\\

$\star$ FTS: the stabilizing control is $$ {\blue u(x) = -\frac{\omega_0}{\beta}  - \langle Bx, x \rangle^{-\mu}}.$$

$\star$ FxTS: the stabilizing control is
$$ {\blue u(x) = -\frac{\omega_0}{\beta}  - \langle Bx, x \rangle^{-\mu}- \langle Bx, x \rangle^{+\mu}},\; u(0)=0, $$

$\star$ PrTS: the stabilizing control is
$$ {\blue u(x) = -\frac{\omega_0}{\beta}  - \rho \langle Bx, x \rangle^{-\mu}- \rho\langle Bx, x \rangle^{+\mu}},\; u(0)=0, $$
with $u(0)=0$ and $0<\mu<1/2.$

\subsection{Application to  linear systems (FTS)}

We consider the linear system:
$$
\textcolor{blue}{(\text{LS})} \hspace{1cm} \dot{x}(t) = Ax(t) + \textcolor{red}{Lv(t)}, \quad x(0) = x_0 \in H.
$$

The above necessary conditions for FTS/FxTS/PrTS for  (BLS) apply to the linear system  (LS) with $L^*$ instead of  $B$.

We have the following  FTS result  for the linear system (LS), which follows from the bilinear case with $B=LL^*:$

\begin{corollaire}

\begin{itemize}

  \item Let $A$ generate a quasi-contractive semigroup $S(t)$ of type $\omega_0$, and assume that {\blue $ \ker(L^*)^\perp $ is invariant} under the semigroup $ S(t) $ and that:  $\exists \alpha >0$ s.t
      $${\blue \|L^*x\|\ge \alpha \|x\|, \forall x\in \ker(L^*)^\perp}.$$

    \vspace{0.25cm}

 \item Then  the system \textcolor{blue}{(LS)} is finite-time stable on \textcolor{blue}{$ \ker(L^*)^\perp $}, with the following  stabilizing control:
   $$
  {\blue v = - \frac{\omega_0}{\alpha^2} L^*x - \|L^*x\|^{-2\mu} L^*x, \; (0 < \mu < 1/2)}.
  $$

\end{itemize}

\end{corollaire}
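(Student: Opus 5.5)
The plan is to reduce the corollary to the bilinear finite-time result (the theorem on FTS when $\ker(B)^\perp$ is invariant), in its quasi-contractive form from the Remark following the prescribed-time theorem, with the choice
$$
B := LL^*, \qquad \beta := \alpha^2 .
$$

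\textbf{Step 1: check assumption $(\mathcal{B})$.} The operator $B$ is bounded, self-adjoint and nonnegative. Moreover $\langle Bx,x\rangle=\|L^*x\|^2$, which gives
$$
\ker(B)=\ker(L^*), \qquad \text{hence } \ker(B)^\perp=\ker(L^*)^\perp=:F .
$$
The observation inequality then reads $\langle Bx,x\rangle\ge \alpha^2\|x\|^2$ on $F$, which is the coercivity in $(\mathcal{B})$ with $\beta=\alpha^2$. Since $B=B^*$, its range lies in $\overline{R(B)}=\ker(B)^\perp=F$, so $B$ maps $F$ into itself. By hypothesis $F$ is also invariant under $S(t)$.

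\textbf{Step 2: rewrite the feedback as a bilinear one.} Using $\|L^*x\|^{2}=\langle Bx,x\rangle$, the proposed control gives
$$
Lv=-\frac{\omega_0}{\alpha^2}Bx-\langle Bx,x\rangle^{-\mu}Bx .
$$
So the closed loop of (LS) is exactly (BLS) with the multiplicative feedback
$$
u(x)=-\frac{\omega_0}{\beta}-\langle Bx,x\rangle^{-\mu}, \qquad \beta=\alpha^2 .
$$
This is the FTS law announced in the quasi-contractive Remark.

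\textbf{Step 3: well-posedness on $F$.} I follow part I of the FTS proof and write the closed-loop generator as
$$
-\big[(A-\omega_0 I)x\big]+\Big[\Big(\frac{\omega_0}{\alpha^2}B-\omega_0 I\Big)x\Big]+\Theta(x).
$$
Each piece is handled separately:
\begin{itemize}
\item $-(A-\omega_0 I)$ is maximal monotone, since $A-\omega_0 I$ generates a contraction semigroup.
\item $\tfrac{\omega_0}{\alpha^2}B-\omega_0 I$ is bounded, linear and monotone on $F$, because $B\ge\alpha^2 I$ there. I take $\omega_0\ge0$, which is harmless.
\item $\Theta=\varphi'$ is maximal monotone, exactly as shown in the theorem.
\end{itemize}
A bounded Lipschitz monotone perturbation of a maximal monotone operator stays maximal monotone (Br\'ezis), so we get a unique weak solution in $F$. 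It is the limit of strong solutions from data in $D(A)\cap F$.

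\textbf{Step 4: energy estimate and extinction.} For the strong solutions, quasi-contractivity gives $\langle Ax,x\rangle\le\omega_0\|x\|^2$. Hence
$$
\tfrac12\frac{d}{dt}\|x\|^2\le \omega_0\|x\|^2-\frac{\omega_0}{\alpha^2}\|L^*x\|^2-\|L^*x\|^{2(1-\mu)} .
$$
Coercivity makes the first two terms together nonpositive, and bounds the last one below by $\alpha^{2(1-\mu)}\|x\|^{2(1-\mu)}$. This gives
$$
\tfrac12\frac{d}{dt}\|x\|^2\le-\alpha^{2(1-\mu)}\|x\|^{2(1-\mu)} .
$$
Integrating as in the theorem and passing to the limit $n\to\infty$ yields extinction by the time
$$
T_*(x_0)\le \frac{\|x_0\|^{2\mu}}{2\mu\alpha^{2(1-\mu)}} .
$$
Lyapunov stability follows because $\|x(t)\|$ is nonincreasing.

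\textbf{Main obstacle.} I expect the main difficulty to be Step 3: justifying maximal monotonicity of the sum restricted to $F$, and keeping the nonlinear flow inside $F$. I would settle both through the $F$-invariance of $B$ and of $S(t)$ noted in Step 1, before running the density argument.
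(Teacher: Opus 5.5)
Your proposal is correct and follows the paper's route. The paper simply says the corollary follows from the bilinear finite-time result (in its quasi-contractive form, with feedback $-\frac{\omega_0}{\beta}-\langle Bx,x\rangle^{-\mu}$) applied to $B=LL^*$, and you supply the details it leaves implicit: $\ker(B)=\ker(L^*)$, $\beta=\alpha^2$, $B$-invariance of $\ker(L^*)^\perp$, well-posedness on that subspace, and the extinction bound $T_*(x_0)\le \|x_0\|^{2\mu}/(2\mu\alpha^{2(1-\mu)})$.

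One small correction: taking $\omega_0\ge 0$ is not merely harmless but necessary for the sign of $\omega_0\|x\|^2-\frac{\omega_0}{\alpha^2}\|L^*x\|^2$, since with a negative type the stated feedback can destabilize; read it as the standing convention on the type $\omega_0$.
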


\begin{itemize}

\item The  control defined by  $$
  {\blue v = - \frac{\omega_0}{\alpha^2} L^*x - \|L^*x\|^{-2\mu} L^*x- \|L^*x\|^{+2\mu} L^*x, \; (0 < \mu < 1/2)}.
  $$
leads to FxTS.\\
 Moreover if $\ker(L^*)^\perp$) is absorbent in finite time for $S(t)$, then we have the GFTS.

\item Furthermore, the  control defined by  $$
  {\blue v = - \frac{\omega_0}{\alpha^2} L^*x - \rho \|L^*x\|^{-2\mu} L^*x- \rho\|L^*x\|^{+2\mu} L^*x, \; (0 < \mu < 1/2)}.
  $$
leads to PrTS, where the gain control $\rho>0$ is chosen depending on the given prescribed time.

\end{itemize}

\subsection{FTS: Without invariance assumption}

\begin{center}
    In the sequel, we consider the Lyapunov function candidate:
    $ V(x) = \langle Bx, x \rangle.$
\end{center}

$\star$ We will give a FTS result on  $ \ker(B)^c\supset\ker(B)^\perp-(0)  $ for the system (BLS).

    $\star$ The (generalized) Lyapunov function  $ V=V(x) = \langle Bx, x \rangle $ is {\blue positive but not necessarily defined} on the whole space, so establishing FTS may require \textcolor{blue}{additional (observability) assumptions}.

 We formally compute,  along the trajectories of the system,
$$
{\blue \dot{V} = 2 \langle Ax, Bx \rangle + 2 u(t)\langle Bx, Bx \rangle}.
$$
 To ensure that $ V $ satisfies a differential inequality of the form ${\blue \dot{V}\le -m V^\alpha}, $ for some $ m>0, 0<\alpha<1$, we design  a control $u=u_1+u_2$, where  the controls  $u_1$ and $u_2,$ have the following roles:

\vspace{0.2cm}

\begin{itemize}
    \item[$\star$] $ u_1 $: will be chosen to 'cancel' the term $ \langle Ax, Bx \rangle $.

    \item[$\star$] $ u_2 $: will be  selected such that $    \dot{V}(x) \le -m V(x)^\alpha. $
\end{itemize}

\vspace{0.25cm}

$\bullet$ Let us consider the assumption
$$
({\cal AB}) \;\;\; {\blue \langle Ax,x\rangle_B \le \alpha \|x\|^2_B, \; \forall x \in D(A)},
$$
where $\langle \cdot, \cdot \rangle_B = \langle B \cdot, \cdot \rangle$ and $\|\cdot\|_B = \langle B \cdot, \cdot \rangle^{1/2}$.

 This assumption  means that the operator $A$ is quasi-dissipative with respect to $\langle \cdot, \cdot \rangle_B$.

\vspace{0.5cm}

$\star$ We can replace the above assumption by the existence of a function $ f: H \to \mathds{R} $ such that
$$
\frac{\langle Ax, x \rangle_B}{\|x\|^2_B} \le f(x), \; \forall x \in D(A)- \ker(B),
$$
with $ F := f \; B $ being a Lipschitz function.

\begin{theoreme}
 Let  assumptions $({\cal A})-({\cal B})-({\cal AB})$ hold, and assume that
for all $\xi \in H,$ we have
$$ {\blue
(OBS)\;\;\; BS(t)  \xi =0,\;\; \forall t\ge 0 \;\;\Rightarrow  \xi=0\cdot}
 $$
  Then the control law
$$
u(x) = -\alpha - \langle Bx,x\rangle^{-\mu} {\bf 1}_{(Bx\ne0)}, \; (0 < \mu < 1/2)
$$
stabilizes the system $(LS)$ in finite time for any $x_0$ s.t ${\blue B x_0\ne0}$ (in particular those of $(\ker B)^\perp$).
\\
 Furthermore, the settling time is such that
$$
T_* \le \frac{\langle Bx_0,x_0\rangle^{\mu}}{\beta \mu}.
$$
\end{theoreme}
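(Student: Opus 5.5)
Use the generalized Lyapunov function $V(x)=\langle Bx,x\rangle=\|x\|_B^2$ along the closed-loop trajectory. The control splits as $u=u_1+u_2$ with $u_1=-\alpha$ and $u_2=-\langle Bx,x\rangle^{-\mu}\mathbf{1}_{(Bx\neq0)}$.

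\textbf{Well-posedness.} Write the closed loop as $\dot x=(A-\alpha B)x-\Theta(x)$, where $\Theta(x)=\langle Bx,x\rangle^{-\mu}Bx$. As in the proof of the invariant case, $\Theta$ is the Fr\'echet derivative of the convex continuous functional $\varphi(x)=\langle Bx,x\rangle^{1-\mu}/(2(1-\mu))$, now considered on all of $H$. At points with $Bx=0$ one has $\varphi(x+h)-\varphi(x)=O(\|h\|^{2(1-\mu)})=o(\|h\|)$, since $2(1-\mu)>1$, so $\Theta$ is defined everywhere and maximal monotone by Br\'ezis. The operator $A-\alpha B$ is quasi-m-dissipative because $B$ is bounded. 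The Br\'ezis theory therefore gives a unique weak solution, obtained as the uniform limit on compacts of strong solutions $x^n$ with initial data $x^n_0\in D(A)$.

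\textbf{Differential inequality.} For the strong solutions, with $V_n(t)=\langle Bx^n,x^n\rangle$, self-adjointness of $B$ gives, for a.e. $t$,
$$\dot V_n=2\langle Ax^n,Bx^n\rangle-2\alpha\|Bx^n\|^2-2\langle Bx^n,x^n\rangle^{-\mu}\|Bx^n\|^2 .$$
Assumption $(\mathcal{AB})$ controls the first term by $2\alpha\|x^n\|_B^2$. The main obstacle is to absorb this against $-2\alpha\|Bx^n\|^2$, that is, to compare $\langle Bx,x\rangle$ with $\|Bx\|^2$. Assumption $(\mathcal B)$ gives this comparison: writing $x=x_1+x_2$ with $x_1\in\ker B$ and $x_2\in\ker(B)^\perp$, we have
$$\|Bx\|^2=\langle B x_2,Bx_2\rangle\ge\beta\langle Bx_2,x_2\rangle=\beta\langle Bx,x\rangle .$$
Under the normalization $\beta\ge1$ (or after replacing $\alpha$ by $\alpha/\beta$ in $u_1$, as in the quasi-contractive remark), the $\alpha$-terms cancel or have the right sign. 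The same inequality bounds the last term:
$$-2\langle Bx,x\rangle^{-\mu}\|Bx\|^2\le-2\beta V^{1-\mu}.$$
Hence $\dot V_n\le-2\beta V_n^{1-\mu}$ on the set where $Bx^n\neq0$, and at points where $Bx^n=0$ we have $\dot V_n=0$, since $V_n$ attains its minimum there.

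\textbf{Finite-time extinction of $V$.} Integrate the inequality on the maximal interval where $V_n>0$, as in Example 1:
$$V_n(t)^\mu\le V_n(0)^\mu-2\mu\beta t \quad\text{while } V_n(t)>0,$$
and $V_n$ stays $0$ afterwards, because it is nonincreasing. Passing to the limit $n\to\infty$ gives $Bx(t)=0$, i.e.\ $V(x(t))=0$, for all $t\ge T_1=\langle Bx_0,x_0\rangle^\mu/(2\mu\beta)$. This is within the stated bound $\langle Bx_0,x_0\rangle^{\mu}/(\beta\mu)$. Lyapunov stability follows from contraction together with the boundedness of $\alpha B$ and the monotonicity of $\Theta$: the solution norm grows at most like $e^{\alpha\|B\|t}\|x_0\|$ on the bounded interval $[0,T_1]$.

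\textbf{Conclusion via (OBS).} For $t\ge T_1$ we have $Bx(t)=0$ identically, so $u(x(t))=-\alpha$ and $u\,Bx=0$. The solution therefore coincides with the free evolution $x(T_1+s)=S(s)x(T_1)$, and $BS(s)x(T_1)=Bx(T_1+s)=0$ for all $s\ge0$. By (OBS), $x(T_1)=0$, hence $x(t)=0$ for all $t\ge T_1$.

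The delicate points are two. First, the comparison $\|Bx\|^2\ge\beta\langle Bx,x\rangle$ must control the $A$-term and the $\alpha$-term simultaneously, which is where the constant in $u_1$ may need adjusting. Second, one must justify passing from $V_n$ to $V$ and check that the solution indeed remains with $Bx=0$ once $V$ has vanished; for the latter I would use the weak-solution uniqueness.
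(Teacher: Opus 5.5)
Your proposal follows the paper's own proof essentially step by step: the generalized Lyapunov function $V=\langle Bx,x\rangle$, a differential inequality of the form $\dot V\le -2\beta V^{1-\mu}$ forcing $V(t)=0$ for $t\ge T_1$, and then (OBS) applied to the free evolution $x(T_1+s)=S(s)x(T_1)$. The caveat you flag is genuine and the paper passes over it, simply asserting the inequality: with the gain $-\alpha$, absorbing $2\langle Ax,Bx\rangle\le 2\alpha V$ into $-2\alpha\|Bx\|^2\le -2\alpha\beta V$ requires, for $\alpha\ge 0$, either $\beta\ge 1$ or the gain $-\alpha/\beta$ as in the quasi-contractive remark; also, your exponent $1-\mu$ and your time $T_1=\langle Bx_0,x_0\rangle^{\mu}/(2\mu\beta)$ are the correct ones and lie within the stated bound, whereas the paper's $V^{1-\mu/2}$ is a slip.
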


\textbf{Proof:}

 The closed-loop system is
$$
\dot{x}=Ax-\alpha Bx-\langle Bx,x\rangle^{-\mu}Bx .
$$

By the same argument as above, it possesses a unique global solution $x$, and the function
$
V(t)=\langle Bx(t),x(t)\rangle
$
satisfies the Ineq:
$$
\dot{V}\le -2\beta V^{1-\mu/2},
$$
which is valid in a non trivial interval as $Bx_0\ne 0.$\\
It follows that
$$
\langle Bx(t),x(t)\rangle=0, \quad \forall t\ge
T_*:=\frac{\langle Bx_0,x_0\rangle^{\mu/2}}{2\beta \mu}.
$$
Hence,
$$
\dot{x}=Ax, \quad \forall t\ge T_*,
$$
which implies
$$
x(t)=S(t-T_*)x(T_*), \quad \forall t\ge T_* .
$$

Therefore,
$$
BS(t-T_*)x(T_*)=0, \quad \forall t\ge T_*,
$$
and consequently
$$
BS(t)x(T_*)=0, \quad \forall t\ge 0 .
$$
The observation condition (OBS) then yields $x(T_*)=0$,
which implies the finite-time stabilization (FTS).

\begin{remarque}
  Some extensions in the case without invariance assumption:

$\star$ The results on FxTS $\&$ PrTS
remain valid as in the invariance case.
\end{remarque}

\section{Applications }

\begin{exemple}
(FTS of a finite-dimension example.)
	
 Let $\mathcal{H}=\mathbb{R}^4$ and consider the following bilinear system:
	\begin{equation}\label{sydf}
		\left\{\begin{array}{l}
			\frac{d}{d t} x_1(t)= x_2(t)+2x_4(t) +u(x) x_1(t) \\
			\frac{d}{d t} x_2(t)=x_1(t)+u(x) x_2(t) \\
			\frac{d}{d t} x_3(t)= x_3(t) + u(x)x_3(t)\\
			\frac{d}{d t} x_4(t)=0
		\end{array}\right.
	\end{equation}
	
	$\bullet$ Here, we have: $x(t)=(x_1(t),x_2(t),x_3(t),x_4(t))^\intercal$ and
	$$
	A=\left(\begin{array}{cccc}
		0& 1 & 0 & 2 \\
		1& 0 & 0 & 0 \\
		0 & 0 & 1 &0\\
		0&0&0&0
	\end{array}\right),  \quad
	B=\left(\begin{array}{llll}
		1 & 0 & 0 & 0 \\
		0 & 1 & 0 &0\\
		0 & 0 & 1 & 0\\
		0 & 0 & 0 & 0
	\end{array}\right)
	$$
The matrix $A$ is dissipative and $B$ is a  self-adjoint positive matrix.
\\ We have $ \ker(B) = \operatorname{span}\{(0,0,0,1)\} $ and $ \ker(B)^{\perp} = \operatorname{span}\{(1,0,0,0), (0,1,0,0), (0,0,1,0)\}. $
\\ Here, the space $ \ker(B)^{\perp} $ is $ A $-invariant, then we achieve FTS of {\color{blue}(\ref{sydf})} for all $ x_0 \in \operatorname{span}\{(1,0,0,0), (0,1,0,0), (0,0,1,0)\} $, under the control law:
$$
	u(x) =
	\begin{cases}
	- \sqrt{2}	-\left(x_1(t)^2 + x_2(t)^2 + x_3(t)^2\right)^{-\mu}, & \text{if } (x_1, x_2, x_3) \neq (0,0,0) \\
		0, & \text{if } (x_1, x_2, x_3) = (0,0,0)
	\end{cases}
	\label{contr001}
$$
with $\omega_0=\sqrt{2}$ and  $ 0 < \mu < 1/2 $.\\
Moreover we have
$$
T_*(x_0)\le \frac{\|x_0\|^{2\mu}}{2\mu}
$$
 We have the  FxTS of {\color{blue}(\ref{sydf})} for all $ x_0 \in \operatorname{span}\{(1,0,0,0), (0,1,0,0), (0,0,1,0)\} $, under the control law:
$$
	u(x) =
		-\sqrt{2} -\left(x_1(t)^2 + x_2(t)^2 + x_3(t)^2\right)^{-\mu}-\left(x_1(t)^2 + x_2(t)^2 + x_3(t)^2\right)^{\mu},
$$
$ \text{if } (x_1, x_2, x_3) \neq (0,0,0),$ and
$$
u(x)=
		0, \; \text{if } (x_1, x_2, x_3) = (0,0,0)
	$$
with   $ 0 < \mu < 1/2 $.\\
Moreover we have
$$
T_*(x_0)\le \frac{\pi}{4\mu}
$$
The  PrTS of {\color{blue}(\ref{sydf})} occurs for all $ x_0 \in \operatorname{span}\{(1,0,0,0), (0,1,0,0), (0,0,1,0)\} $, under the control law
$$
	u(x) =
		-\sqrt{2} -\lambda\left(x_1(t)^2 + x_2(t)^2 + x_3(t)^2\right)^{-\mu}-\lambda\left(x_1(t)^2 + x_2(t)^2 + x_3(t)^2\right)^{\mu},
$$
$ \text{if } (x_1, x_2, x_3) \neq (0,0,0),$ and
$$
u(x)=
		0, \; \text{if } (x_1, x_2, x_3) = (0,0,0)
	$$
with   $ 0 < \mu < 1/2 $.\\
Here, the gain control $\lambda>0$ is chosen such that
$$
\frac{\pi}{4\lambda\mu} \le T
$$
That is
$$
\lambda\ge \frac{\pi}{4T\mu},
$$
where $T>0$ is the given prescribed time.

\end{exemple}

\begin{exemple} (FTS of a  Heat Equation).

Consider the linear heat equation:
\begin{equation*}
	\left\{
	\begin{array}{ll}
		\displaystyle\frac{\partial}{\partial t} y(x,t) =  y_{xx}(x,t) +  \sum_{1\le j \le q} v_j(t) a_j \phi_j, & (t, x) \in \mathbb{R}^{+} \times (0,1) \\
		y(0,t) = y(1,t) = 0, & t \in \mathbb{R}^{+} \\
		y( x,0) = y_0(x), & x \in (0,1)
	\end{array}
	\right.
\end{equation*}
with  $ a_j> 0,\; \forall 1\le j \le q$ for some $ q \in \mathbb{N}^*$.\\
 Where $\phi_j, \; j\ge 1$ are the orthonormal eigenfunctions of the system's operator: $A=\partial_{xx}$ with domain $D(A)=H_0^1(0,1)\cap H^2(0,1).$\\ Here, we have a linear system of the form (LS) with $L: U:=\mathbb{R}^q \to H$:
$$
L(x_1,..,x_q)=\sum_{1\le j \le q} x_j a_j \phi_j
$$
and with corresponding bilinear control operator $B=LL^*: L^2(0,1) \rightarrow L^2(0,1)$, given by
$$
B y = \sum_{j=1}^{q} a_j \langle y, \phi_j \rangle \phi_j,\; y\in L^2(0,1).
$$
Here, $\beta=\min_{1\le j\le q}(a_j).$
\\We have  the finite-time stability  for all initial states
$$
x_0 \in span\{ \phi_j,\; 1\le j\le q\},
$$
with a settling time s.t
$$T_*(x_0)\le \frac{\|x_0\|^{2\mu}}{2\mu \beta^{1-\mu}}.$$
The stabilizing control is given by
\begin{equation*}
	u(y) =
	\begin{cases}
		-\left( \displaystyle\sum_{1\le j \le q} a_j \langle y, \phi_j \rangle^2 \right)^{-\mu}, & \text{if } By \neq 0 \\
		0, & \text{if } By = 0
	\end{cases}
\end{equation*}
with $ 0 < \mu < 1/2 $.\\
Observing that $ \ker(B)=\overline{span}\{\phi_k,\; k>q \} $ is $ S(t) $-invariant, we conclude that finite-time stability (FTS) does not occur for $ x_0 \in \ker(B) $, regardless the choice of the control.
\\
We have  the FxTS  for all initial states
$$
x_0 \in span\{ \phi_j,\; 1\le j\le q\},
$$
and we have
$$T_*(x_0)\le \frac{\pi}{4\mu \beta^{1-\mu}}.$$
 The stabilizing control is given by
\begin{equation*}
	u(y) =
	\begin{cases}
		-\left( \displaystyle\sum_{1\le j \le q} a_j \langle y, \phi_j \rangle^2 \right)^{-\mu}-\left( \displaystyle\sum_{1\le j \le q} a_j \langle y, \phi_j \rangle^2 \right)^{\mu}, & \text{if } By \neq 0 \\
		0, & \text{if } By = 0
	\end{cases}
\end{equation*}
with $ 0 < \mu < 1/2 $.\\
We have  the PrTS  for all initial states
$$
x_0 \in span\{ \phi_j,\; 1\le j\le q\},
$$
and we have
$$T_*(x_0)\le \frac{\pi}{4\mu \beta^{1-\mu}}.$$
 The stabilizing control is given by
\begin{equation*}
	u(y) =
	\begin{cases}
		-\lambda \left( \displaystyle\sum_{1\le j \le q} a_j \langle y, \phi_j \rangle^2 \right)^{-\mu}-\lambda\left( \displaystyle\sum_{1\le j \le q} a_j \langle y, \phi_j \rangle^2 \right)^{\mu}, & \text{if } By \neq 0 \\
		0, & \text{if } By = 0
	\end{cases}
\end{equation*}
with $ 0 < \mu < 1/2 $ and  the gain control $\lambda>0$ is chosen such that
$$
\lambda\ge \frac{\pi}{4T\mu \beta^{1-\mu}},
$$
where $T>0$ is the given prescribed time.

\begin{remarque}

 Let us consider the situation where the bilinear control operator is given by
$$
B y = \sum_{j=p}^{+\infty} \frac{1}{j} \langle y, \phi_j \rangle \phi_j,
\qquad y \in L^2(0,1),
$$
(with $p \in \mathbb{N}^*$), which is not coercive on
$$
\ker(B)^\perp = \overline{\mathrm{span}}\{\phi_k,\; k > p\}.
$$
However, the coercivity holds on any finite-dimensional subspace
$$
F = \mathrm{span}\{\phi_k,\; p < k \le q\}, \qquad q > p,
$$
on which FTS and  FxTS  can be achieved according to the above results.
\end{remarque}
\end{exemple}

\begin{exemple}(FTS of a Transport Equation).

Let $\Omega=(0,+\infty)$ and $\omega=(a,+\infty),\; a>0$, and consider the following bilinear system:
 \begin{equation}\label{eq1}
\begin{cases}
z_{t}(x, t)=z_{x}(x, t)+\chi_{\omega} u(t) z(x, t), & (x, t) \in \Omega \times(0,+\infty), \\
z(x, 0)=z_{0}(x), & x \in \Omega,
\end{cases}
\end{equation}
where $u$ is a scalar bilinear control.
\\ Here, $A=\partial_x$ with domain $D(A)=\{y\in H^1(0,+\infty):\; y(0)=0\}$ generates a contraction semigroup (even isometric) on $H=L^2(0,+\infty),$ given by:
$$
\big(S(t)y\big)(x)=y(x-t){\bf 1}_{(x-t \ge0)},\; \forall y\in L^2(0,+\infty).
$$
Here, the necessary condition is satisfied and the space
$$
\ker(B)^\perp=\{y\in L^2(0,+\infty): \; y=0, \; a.e. \; x\in (0,a)\}
$$
is invariant under $S(t)$.
\\
 First, for all $y_{0} \in L^2(0,+\infty)$ s.t $y_0(x)=0,$ for a.e., $ 0<x<a$, the system \eqref{eq1}   is finite-time stable
 with the following feedback control:
$$
    u(t)=-\| y(t)\|^{-2\mu} \mathbf{1}_{\big( y(t) \neq 0\big) }, \; (0<\mu<1).
$$
 Moreover, we observe that $\ker(B)^\perp$ is absorbent for the semigroup $S(t)$, as for every $y_0\in H,$ we have $S(a)y_0=0, $ a.e $x\in (0,a)$.\\
Then we have the global FTS under the control
$$
u=\left\{
    \begin{array}{ll}
      0, & 0\le t \le a \\
            -\|  y(t)\|^{-2\mu} \mathbf{1}_{\big(  y(t) \neq 0\big)}, & t>a
    \end{array}
  \right.
$$
and we have
$$
T_{*}(x_0)\le a+ \frac{\|x_0\|^{2\mu}}{2\mu}
$$
 Moreover, the global FxTS occurs when using the control:
$$
u=\left\{
    \begin{array}{ll}
      0, & 0\le t \le a \\
\\
            -\big(\|  y(t)\|^{-2\mu} + \|  y(t)\|^{+2\mu}\big) \mathbf{1}_{\big(  y(t) \neq 0\big)}, & t>a
    \end{array}
  \right.
$$
and we have
$$
\sup_{x_0} T_{*}(x_0)\le a+ \frac{\pi}{4\mu}
$$

\end{exemple}

\newpage

\begin{center}
  {\bf Concluding Remarks  }
  \end{center}

$\bullet$  While the {\blue additive} FTS control is continuous near the equilibrium, the {\blue bilinear} FTS control is not. However, this discontinuity is compensated by the fact that the control enters the system multiplicatively (through the state).

$\bullet$  The blow-up phenomenon observed in bilinear control cannot, in general, be avoided, as we can see in the context of the first (elementary) example discussed above, where a positive exponent in the feedback law cannot guarantee the FTS.

$\bullet$  Blow-up can be avoided in certain situations of FTS where backward uniqueness fails to hold.  For example, one can consider the case of nilpotent semigroup  $ S(t) $, in which situation one can take $ u = 0 $ regardless of the choice of $ B $ (or $ B = 0 $ regardless of the choice of $ u $).

\vspace{0.5cm}

\begin{center}
  {\bf Some problems that still are open}
  \end{center}

\vspace{0.5cm}

$\bullet$ FTS of the heat equation with local control:

$$
y_{t} = y_{xx} + u(t)\chi_{\omega} y, \quad \omega \subset \Omega.
$$

For the linear version,
$$
y_{t} = y_{xx} + \mathbf{1}_{\omega} v(t),
$$
some partial results can be obtained if the global existence of the solution is assumed [Amaliki and Ouzahra, 2026].

$\bullet$ The second approach to FTS seems to be more suitable for the degenerate case (work in progress):\\

$$y_{t}=(a(x)y_{x})_x+u(t) \mathbf{1}_{\omega} y, \; \omega\subset \Omega,$$

and

$$y_{t}=a(x)y_{xx}+u(t) \mathbf{1}_{\omega} y, \; \omega\subset \Omega.$$

\vspace{0.5cm}

$\bullet$ FTS of  the wave equation:

$$y_{tt}=y_{xx}+u_1(t) y + u_2(t)y_t.$$

Using a time-varying feedback control $u_1$ and taking $u_2=0,$ it was shown (see [Jammazi et al. 2023]) that for any given $T>0$

$$
\lim_{t\to T^-} y(t)=0
$$


\newpage

\begin{thebibliography}{99}

\bibitem{ammari2020feedback} K. Ammari and M. Ouzahra, Feedback stabilization for a bilinear control system under weak observability inequalities, {\it Automatica}, \textbf{113} (2020), 108821.

\bibitem{ammari2021feedback} K. Ammari, S. El Alaoui, and M. Ouzahra, Feedback stabilization of linear and bilinear unbounded systems in Banach space, {\it Systems \& Control Letters}, \textbf{155} (2021), 104987.

\bibitem{ammari2000stabilization} K. Ammari and M. Tucsnak, Stabilization of Bernoulli--Euler beams by means of a pointwise feedback force, {\it SIAM Journal on Control and Optimization}, \textbf{39}(4) (2000), 1160-1181.

\bibitem{ouzahra2025finite} Y. Amaliki and M. Ouzahra, Finite-time stabilization of a class of unbounded parabolic bilinear systems in Hilbert space, {\it Journal of Mathematical Analysis and Applications}, \textbf{551}(2) (2025), 129720.

\bibitem{ouzahra2026} Y. Amaliki and M. Ouzahra, Finite-TimeStabilizationofLinear1Systems, {\it Springer Nature Switzerland AG,  Control Theory and Inverse Problems, Trends in Mathematics,} (2026). doi.org$/10.1007/978-3-031-96278-3_10.$

\bibitem{attouch2014variational} H. Attouch, G. Buttazzo, and G. Michaille, {\it Variational analysis in Sobolev and BV spaces: applications to PDEs and optimization}, Vol. 17, Siam, 2014.

\bibitem{ball1977strongly} J. M. Ball, Strongly continuous semigroups, weak solutions, and the variation of constants formula, {\it Proc. Amer. Math. Soc.}, \textbf{63} (1977), 370--373.

\bibitem{ball1978asymptotic} J. M. Ball, On the asymptotic behaviour of generalized processes, with applications to nonlinear evolution equations, {\it J. Differential Equations.}, \textbf{27} (1978), 224--265.

\bibitem{ball1979feedback} J. M. Ball and M. Slemrod, Feedback stabilization of distributed semilinear control systems, {\it Appl. Math. Opt.}, \textbf{5} (1979), 169--179.

\bibitem{barbu2018controllability} V. Barbu, {\it Controllability and stabilization of parabolic equations}, Birkh\"auser, 2018.

\bibitem{benzaza2023strong} A. Benzaza, L. Lafhim, and M. Ouzahra, Strong and weak-star stabilisation of bilinear systems on nonreflexive Banach spaces, {\it International Journal of Control}, \textbf{96}(2) (2023), 302-308.

\bibitem{benzaza2023robust} A. Benzaza, A. Brouri, and M. Ouzahra, Robust stabilization for affine control systems in Banach spaces, {\it Asian Journal of Control}, \textbf{25}(1) (2023), 497-508.

\bibitem{berrahmoune1999stabilization} L. Berrahmoune, Stabilization and decay estimate for distributed bilinear systems, {\it Systems and Control Letters.}, \textbf{36} (1999), 167--171.

\bibitem{berrahmoune2001remarks} L. Berrahmoune, Y. Elboukfaoui, and M. Erraoui, Remarks on the feedback stabilization of system affine in control, {\it European Journal of Control}, \textbf{7}(1) (2001), 17-28.

\bibitem{berrahmoune2010stabilization} L. Berrahmoune, Stabilization of unbounded bilinear control systems in Hilbert space, {\it Journal of mathematical analysis and applications}, \textbf{372}(2) (2010), 645-655.


\bibitem{bhat2000finite} S. P. Bhat and D. S. Bernstein, Finite-time stability of continuous autonomous systems, {\it SIAM Journal on Control and optimization}, \textbf{38}(3) (2000), 751-766.

\bibitem{bhat2005geometric} S. P. Bhat and D. S. Bernstein, Geometric homogeneity with applications to finite-time stability, {\it Mathematics of Control, Signals and Systems}, \textbf{17}(2) (2005), 101-127.

\bibitem{bounit1999feedback} H. Bounit and H. Hammouri, Feedback stabilization for a class of distributed semilinear control systems, {\it Nonlinear Anal.}, \textbf{37} (1999), 953--969.

\bibitem{bounit2003comments} H. Bounit, Comments on the feedback stabilization for bilinear control systems, {\it Applied Mathematics Letters}, \textbf{16} (2003), 847--851.

\bibitem{bounab1991weak} A. Bounabat and J. P. Gauthier, Weak stabilizability of infinite dimensional nonlinear systems, {\it Applied Mathematics Letters}, \textbf{4} (1991), 95--98.

    \bibitem{botsC6}
 M. W. Botsko, An elementary proof of Lebesgue's differentiation theorem. The American Mathematical Monthly, {\bf 110} (2003), 834--838.



\bibitem{boutoulout2014unbounded} A. Boutoulout, R. El Ayadi, and M. Ouzahra, An unbounded stabilization problem of bilinear systems, {\it Mathematics and Computers in Simulation}, \textbf{102} (2014), 39-50.

\bibitem{chen2000exponential} M. S. Chen and S. T. Tsao, Exponential stabilization of a class of unstable bilinear systems, {\it IEEE Transactions on Automatic Control}, \textbf{45}(5) (2000), 989-992.

\bibitem{corduneanu2008principles} C. Corduneanu, {\it Principles of differential and integral equations}, Vol. 295, American Mathematical Society, 2008.

\bibitem{coron2017null} J. M. Coron and H. M. Nguyen, Null controllability and finite time stabilization for the heat equations with variable coefficients in space in one dimension via backstepping approach, {\it Archive for Rational Mechanics and Analysis}, \textbf{225}(3) (2017), 993-1023.

\bibitem{dorato2006overview} P. Dorato, An overview of finite-time stability, {\it Current trends in nonlinear systems and control}, (2006), 185-194.

\bibitem{engel2000one} K. J. Engel and R. Nagel, {\it One-parameter semigroups for linear evolution equations}, Vol. 194, Springer, 2000.

\bibitem{erugin1951continuouation} N. Erugin, On the continuouation of solutions of differential equations (in Russian), {\it Prikl. Mat. Mekh.}, \textbf{17}(4) (1951).

\bibitem{espitia2019boundary} N. Espitia, A. Polyakov, D. Efimov, and W. Perruquetti, Boundary time-varying feedbacks for fixed-time stabilization of constant-parameter reaction-diffusion systems, {\it Automatica}, \textbf{103} (2019), 398-407.

\bibitem{fattorini1974time} H. O. Fattorini, The time-optimal control problem in Banach spaces, {\it Applied Mathematics and Optimization}, \textbf{1} (1974), 163--188.

\bibitem{fattorini2001survey} H. O. Fattorini, A survey of the time optimal problem and the norm optimal problem in infinite dimension, {\it Cubo Mat Educational}, \textbf{3} (2001), 147--169.

\bibitem{fenza2024weak} K. Fenza and M. Ouzahra, Weak and strong stabilisation of a class of bilinear systems with time varying delay, {\it Journal of Control and Decision}, (2024), 1-15.

\bibitem{gutman1981stabilizing} P.O. Gutman, Stabilizing controllers for bilinear systems, {\it IEEE Trans. Automat. Control}, \textbf{AC-26} (1981), 917-922.

\bibitem{jammazi2023small} C. Jammazi, M. Ouzahra, and M. Sogor\'e, Small-time extinction with decay estimate of bilinear systems on Hilbert space, {\it Journal of Nonlinear Science}, \textbf{33}(4) (2023), 54.

\bibitem{jurjevic1978controllability} V. Jurjevic and J.P. Quinn, Controllability and stability, {\it J. Differential Equations}, \textbf{28} (1978), 381-389.

\bibitem{kamenkov1953stability} G. Kamenkov, On stability of motion over a finite interval of time (in Russian), {\it Journal of Applied Math. and Mechanics (PMM)}, \textbf{17} (1953), 529-540.

\bibitem{kato1967nonlinear} T. Kato, Nonlinear semigroups and evolution equations, {\it J. Math. Soc. Japan.}, \textbf{19}(4) (1967), 508--520.



\bibitem{kato2012perturbation} T. Kato, {\it Perturbation theory for linear operators}, Vol. 132. Springer Science \& Business Media, 2012.

\bibitem{khapalov2010controllability} A. Y. Khapalov, {\it Controllability of partial differential equations governed by multiplicative controls}, Springer, 2010.

\bibitem{lasiecka1993uniform} I. Lasiecka and D. Tataru, Uniform boundary stabilization of semilinear wave equations with nonlinear boundary damping, {\it Differential and Integral Equations}, \textbf{6} (1993), 507--533.

\bibitem{lebedev1954problem} A. Lebedev, The problem of stability in a finite interval of time (in Russian), {\it Journal of Applied Math. and Mechanics (PMM)}, \textbf{18} (1954), 75-94.

\bibitem{liu2001exponential} K. Liu, Z. Liu, and B. Rao, Exponential stability of an abstract nondissipative linear system, {\it SIAM journal on control and optimization}, \textbf{40}(1) (2001), 149-165.

\bibitem{lourini2025strong} A. Lourini, M. El Azzouzi, and M. Laabissi, Strong and exponential stabilization of linear boundary control systems, {\it Mathematical Control and Related Fields}, \textbf{15} (2025), 528--547.

\bibitem{mohler1973bilinear} R. R. Mohler, {\it Bilinear Control Processes}, Academic Press New York, 1973.

\bibitem{mohler1980overview} R. R. Mohler and W. J. Kolodziej, An overview of bilinear system theory and applications, {\it IEEE Transactions on Systems, Man and Cybernetics}, \textbf{10} (1980), 683--688.

\bibitem{moulay2006finite} E. Moulay and W. Perruquetti, Finite time stability and stabilization of a class of continuous systems, {\it Journal of Mathematical analysis and applications}, \textbf{323}(2) (2006), 1430-1443.

\bibitem{muller1989strong} S. Muller, Strong Convergence and Arbitrarily Slow Decay of Energy for a Class of Bilinear Control Problems, {\it Journal of Diff. Eq.}, \textbf{81} (1989), 50-67.

\bibitem{najib2024stabilization} H. Najib and M. Ouzahra, Stabilization in Finite Time of a Class of Unbounded Non-linear Systems, {\it Journal of Dynamical and Control Systems}, \textbf{30}(2) (2024), 17.

\bibitem{ouzahra2008strong} M. Ouzahra, Strong stabilization with decay estimate of semilinear systems, {\it Systems \& control letters}, \textbf{57}(10) (2008), 813-815.

\bibitem{ouzahra2010exponential} M. Ouzahra, Exponential and weak stabilization of constrained bilinear systems, {\it SIAM Journal on Control and Optimization}, \textbf{48}(6) (2010), 3962-3974.

\bibitem{ouzahra2012stabilisation} M. Ouzahra, A. Tsouli, and A. Boutoulout, Stabilisation and polynomial decay estimate for distributed semilinear systems, {\it International Journal of Control}, \textbf{85} (2012), 451--456.

\bibitem{ouzahra2020uniform} M. Ouzahra, Uniform exponential stabilization of nonlinear systems in Banach spaces, {\it Mathematical Methods in the Applied Sciences}, \textbf{43}(12) (2020), 7311-7325.

\bibitem{ouzahra2021exponential} M. Ouzahra, Exponential stabilization of unstable bilinear systems in finite-and infinite-dimensional spaces, {\it IEEE Transactions on Automatic Control}, \textbf{66}(12) (2021), 5982-5989.

\bibitem{ouzahra2021finite} M. Ouzahra, Finite-time control for the bilinear heat equation, {\it European Journal of Control}, \textbf{57} (2021), 284-293.


\bibitem{pazy1983semi} A. Pazy, {\it Semi-groups of linear operators and applications to partial differential equations}, Springer Verlag, New York, 1983.

\bibitem{polyakov2011nonlinear} A. Polyakov, Nonlinear feedback design for fixed-time stabilization of linear control systems, {\it IEEE transactions on Automatic Control}, \textbf{57}(8) (2011), 2106-2110.

\bibitem{polyakov2018homogeneous} A. Polyakov, J. M. Coron, and L. Rosier, On homogeneous finite-time control for linear evolution equation in Hilbert space, {\it IEEE Transactions on Automatic Control}, \textbf{63}(9) (2018), 3143-3150.

\bibitem{polyakov2020generalized} A. Polyakov, {\it Generalized homogeneity in systems and control}, 2020.

\bibitem{preisC6}
D. Preiss, Differentiability of Lipschitz functions on Banach spaces,  Journal of Functional Analysis, {\bf 91} (1990), 312--345.


\bibitem{efimov2021finite} D. Efimov and A. Polyakov, Finite-time stability tools for control and estimation, 2021.

\bibitem{preiss1990differentiability} D. Preiss, Differentiability of Lipschitz functions on Banach spaces, {\it Journal of Functional Analysis}, \textbf{91} (1990), 312--345.

\bibitem{quinn1980stabilization} J.P. Quinn, Stabilization of bilinear systems by quadratic feedback control, {\it J. Math. Anal. Appl.}, \textbf{75} (1980), 66-80.

\bibitem{resendepereira2021nonlinear} H. W. Resende Pereira, R. Kawakami Harrop Galv$\check{a}$o, and T. G. Yoneyama, Nonlinear predictive control employing Carleman bilinearization and fixed search directions, {\it Asian Journal of Control}, \textbf{23} (2021), 2565--2574.

\bibitem{rhandi1998positivity} A. Rhandi, Positivity and Stability for a Population Equation with Diffusion on, {\it Positivity}, \textbf{2} (1998), 101--113.

\bibitem{rhandi1999asymptotic} A. Rhandi and H. Schnaubelt, Asymptotic behaviour of a non-autonomous population equation with diffusion in $L^1,$ {\it Discrete and Continuous Dynamical Systems}, \textbf{5} (1999), 663--684.

\bibitem{roxin1966finite} E. Roxin, On Finite Stability in Control Systems, {\it Rendiconti del Circolo Matematico di Palermo}, \textbf{15} (1966), 273-283.

\bibitem{ryan1983asymptotically} E.P. Ryan and N. Buckingham, On asymptotically stabilizing feedback control of bilinear systems, {\it IEEE Transactions on Automatic Control}, \textbf{28}(8) (1983), 863-864.

\bibitem{ryan1984optimal} E.P. Ryan, Optimal feedback control of bilinear systems, {\it Journal of Optimization Theory and Applications}, \textbf{44} (1984), 333-362.

\bibitem{slemrod1978stabilization} M. Slemrod, Stabilization of bilinear control systems with applications to nonconservative problems in elasticity, {\it SIAM. J. Control and optim.}, \textbf{16} (1978), 131-141.

  \bibitem{Vieru2005}  Vieru, A. (2005). On null controllability of linear systems in Banach spaces. Systems \& control letters, 54(4), 331-337.

\bibitem{zhang2019finite} C. Zhang, Finite-time internal stabilization of a linear 1-D transport equation, {\it Systems \& Control Letters}, \textbf{133} (2019), 10452.

\end{thebibliography}
\end{document}